\documentclass[12pt,reqno]{amsart}
\usepackage[T1]{fontenc}
\usepackage[dvipsnames]{xcolor}
\usepackage{float}
\usepackage[utf8]{inputenc}
\usepackage{amsmath,amsthm,amssymb}
\usepackage{indentfirst}
\usepackage{url}
\usepackage{tikz-cd}
\usepackage[colorlinks]{hyperref}
\usepackage{colonequals}
\usepackage{graphicx}
\usepackage{enumerate}
\usepackage{soul}
\usepackage{stmaryrd}
\usepackage{mathrsfs,mathtools}
\usepackage{palatino}
\usepackage{bbm}
\usepackage[alphabetic,nobysame]{amsrefs}
\usepackage{hyperref}
\usepackage{ytableau}
\usepackage[margin=0.8in]{geometry}

\newtheorem{thm}{Theorem}[section]
\newtheorem{defn}[thm]{Definition}
\newtheorem{example}[thm]{Example}
\newtheorem{lem}[thm]{Lemma}

\newtheorem{prop}[thm]{Proposition}

\newtheorem{rem}[thm]{Remark}

\newcommand{\Z}{\mathbb{Z}}
\newcommand{\x}{\textbf{x}}
\newcommand{\y}{\textbf{y}}

\newcommand{\MBPD}{\mathsf{MBPD}}
\newcommand{\RPD}{\mathsf{RPD}}

\newcommand{\SRPD}{\mathsf{SRPD}}

\newcommand{\fG}{\mathfrak{G}}
\newcommand{\fS}{\mathfrak{S}}
\newcommand{\fCM}{\mathfrak{CM}}
\newcommand{\rwt}{\mathsf{rwt}}
\newcommand{\cwt}{\mathsf{cwt}}
\newcommand{\rw}{\mathsf{rw}}
\newcommand{\cw}{\mathsf{cw}}
\newcommand{\rajcode}{\mathsf{rajcode}}

\newcommand{\Snow}{\mathsf{Snow}}
\newcommand{\Rothe}{\mathsf{Rothe}}
\newcommand{\PD}{\mathsf{PD}}
\newcommand{\dark}{\mathsf{dark}}

\newcommand{\btile}{
	\begin{tikzpicture}[x=0.8em,y=0.8em,thick,color = blue]
		\draw[step=1,gray,thin] (0,0) grid (1,1);
		\draw[color=black, thick, sharp corners] (0,0) rectangle (1,1);
\end{tikzpicture}}

\newcommand{\htile}{
	\begin{tikzpicture}[x=0.8em,y=0.8em,thick,color = blue]
		\draw[step=1,gray,thin] (0,0) grid (1,1);
		\draw[color=black, thick, sharp corners] (0,0) rectangle (1,1);
		\draw(1.0,0.5)--(0.0,0.5);
	\end{tikzpicture}
}

\newcommand{\vtile}{
	\begin{tikzpicture}[x=0.8em,y=0.8em,thick,color = blue]
		\draw[step=1,gray,thin] (0,0) grid (1,1);
		\draw[color=black, thick, sharp corners] (0,0) rectangle (1,1);
		\draw(0.5,1.0)--(0.5,0.0);
\end{tikzpicture}}

\newcommand{\ptile}{
	\begin{tikzpicture}[x=0.8em,y=0.8em,thick,color = blue]
		\draw[step=1,gray,thin] (0,0) grid (1,1);
		\draw[color=black, thick, sharp corners] (0,0) rectangle (1,1);
		\draw(0.5,1.0)--(0.5,0.0);
		\draw(1.0,0.5)--(0.0,0.5);
\end{tikzpicture}}

\newcommand{\rtile}{
	\begin{tikzpicture}[x=0.8em,y=0.8em,thick,color = blue]
		\draw[step=1,gray,thin] (0,0) grid (1,1);
		\draw[color=black, thick, sharp corners] (0,0) rectangle (1,1);
		\draw(1.0,0.5)--(0.5,0.5)--(0.5,0.0);
\end{tikzpicture}}

\newcommand{\jtile}{
	\begin{tikzpicture}[x=0.8em,y=0.8em,thick,color = blue]
		\draw[step=1,gray,thin] (0,0) grid (1,1);
		\draw[color=black, thick, sharp corners] (0,0) rectangle (1,1);
		\draw(0.5,1.0)--(0.5,0.5)--(0.0,0.5);
\end{tikzpicture}}

\newcommand{\mtile}{
	\begin{tikzpicture}[x=0.8em,y=0.8em,thick,color = blue]
		\draw[step=1,gray,thin] (0,0) grid (1,1);
		\draw[color=black, thick, sharp corners] (0,0) rectangle (1,1);
		\draw(0.5,1.0)--(0.5,0.5)--(0.0,0.5);
		\node at (0.5,0.5) {\tiny$\bullet$};
\end{tikzpicture}}

\definecolor{darkblue}{rgb}{0.0,0,0.7} % darkblue color
\definecolor{darkred}{rgb}{0.7,0,0} % darkred color
\definecolor{darkgreen}{rgb}{0, .6, 0} % darkgreen color
\newcommand{\defi}[1]{{\color{darkred}\emph{#1}}} % emphasis of a definition

\begin{document}
	\title{Constructing Maximal Bumpless Pipedreams for Double Grothendieck Polynomials}

\author{Xuanying Han}
	\address{Department of Mathematics\\
Tianjin University of Finance and Economics, Tianjin 300222, P.R. China}
	\email{hanxuanying@stu.tjufe.edu.cn}

\author{Sophie C.C. Sun}
	\address{Department of Mathematics\\
Tianjin University of Finance and Economics, Tianjin 300222, P.R. China}
	\email{sophiesun@tjufe.edu.cn}

	\maketitle
	\begin{abstract}
		Pipedreams and marked bumpless pipedreams are two combinatorial models that compute double Grothendieck polynomials. 
        While studying matrix Schubert varieties, Pechenik, Speyer, and Weigandt defined the Rajchgot code, denoted by \(\rajcode(\cdot)\), which captures the leading monomial of the top-degree component of a Grothendieck polynomial.
        Combinatorially, their result implies that there exists a unique pipedream (or marked bumpless pipedream) with row weight $\rajcode(w)$ and column weight $\rajcode(w^{-1})$. A construction of such a pipedream was subsequently given by Chou and Yu. In this paper, we resolve the marked bumpless pipedream version of this problem by providing an explicit algorithm.
	\end{abstract}
	
	\section{Introduction}
	The matrix Schubert variety $X_w$ of a permutation $w \in S_n$ is a generalized determinantal variety that has been studied extensively~\cite{ful92,km05,kmy09,wy18} due to its close relation to Schubert varieties and to the Schubert cell decomposition of the flag variety. An important invariant of such varieties is the Castelnuovo–Mumford regularity, which measures their algebraic complexity. Since matrix Schubert varieties are Cohen–Macaulay~\cite{ful92,km05,Ram}, their regularity can be computed as the degree difference between the highest and lowest degrees monomials appearing in their K-polynomial. Knutson and Miller~\cite{km05} showed that the K-polynomial of $X_w$ is the Grothendieck polynomial $\fG_w(\x)$. The Grothendieck polynomials are a family of polynomials introduced by Lascoux and Sch\"utzenberger to represent K-theory classes of Schubert varieties~\cite{ls82a,ls82b}. Their lowest degree homogeneous components are the famous Schubert polynomials $\fS_w(\x)$, and their highest degree homogeneous components are the Castelnuovo–Mumford polynomials $\fCM_w(\x)$. Since the degrees of Schubert polynomials are known, computing the Castelnuovo–Mumford regularity of $X_w$ is then reduced to determining the degree of $\fCM_w(\x)$.
	
	With this motivation, there has been a surge in the study of the Castelnuovo–Mumford polynomials $\fCM_w(\x)$~\cite{cy25,hafner22,hafner26,rrr21,rrw23}. The first complete characterization of their degree was given by Pechenik, Speyer, and Weigandt~\cite{psw24}, where they defined the weak composition $\rajcode(w)$ corresponding to the leading monomial of $\fCM_w(\x)$ under reverse lexicographic order. They computed $\rajcode(w)$ by considering increasing subsequences in the one-line notation of $w$. In addition to their formula, the climbing chain models introduced by Dreyer, Mészáros, and St. Dizier~\cite{dms24}, and the snow diagrams introduced by Pan and Yu~\cite{py24} also compute $\rajcode(w)$ combinatorially.
	
	The double Grothendieck polynomials $\fG_w(\x;\y)$ form a refinement of Grothendieck polynomials in equivariant K-theory, with the specialization $\fG_w(\x; 0) = \fG_w(\x)$. Their highest degree homogeneous components are the double Castelnuovo–Mumford polynomials $\fCM_w(\x; \y)$. Pechenik, Speyer, and Weigandt showed that $\rajcode(\cdot)$ also controls the leading monomial of $\fCM_w(\x; \y)$.
    
	\begin{thm}[{\cite[Theorem 1.4]{psw24}}]
		\label{T: PSW}
		For any term order with $x_n > \cdots > x_1$ and $y_n > \cdots > y_1$, the leading monomial of $\fCM_w(\x;\y)$ is $x^{\rajcode(w)} y^{\rajcode(w^{-1})}$. Furthermore, the coefficient of this monomial is $1$.
	\end{thm}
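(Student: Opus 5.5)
Our plan is to go through the pipedream expansion of the double Grothendieck polynomial, assuming the standard Fomin--Kirillov/Knutson--Miller formula: $\fG_w(\x;\y)$ is a signed sum over (possibly non-reduced) pipedreams $P$ for $w$ of $\prod_{(i,j)\in P}(x_i+y_j-x_iy_j)$ up to the sign convention $(-1)^{|P|-\ell(w)}$. In the convention that makes the top-degree part have the shape in the statement, each cross at $(i,j)$ contributes a factor whose top-degree term is a monomial in $x_i$ and $y_j$ together (or, after the usual substitution, $x_i y_j$-type terms). So $\fCM_w(\x;\y)$ is, up to a global sign, a positive sum over maximal-size pipedreams $P$ of the monomial $x^{\rwt(P)}y^{\cwt(P)}$, where $\rwt,\cwt$ count crosses per row and column. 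The claim then splits into two parts. First, the maximal number of crosses equals $|\rajcode(w)|=|\rajcode(w^{-1})|$. Second, among maximal pipedreams, $\rwt(P)$ is maximized in the relevant order exactly by $\rajcode(w)$, and $\cwt(P)$ by $\rajcode(w^{-1})$. Third, exactly one pipedream attains both maxima.

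\textbf{Step 1.} We reduce to single variables. Setting $\y=0$ recovers $\fCM_w(\x)$. The single-variable result of \cite{psw24}, that the leading monomial under reverse lex is $x^{\rajcode(w)}$, gives the $x$-part. Applying the symmetry $\fG_{w^{-1}}(\x;\y)=\fG_w(\y;\x)$, which comes from transposing pipedreams, gives the $y$-part with $\rajcode(w^{-1})$. The leading monomial of the joint polynomial under a term order with $x_n>\dots>x_1$, $y_n>\dots>y_1$ must project onto these. Since all coefficients of $\fCM_w(\x;\y)$ have a single sign, no cancellation occurs. It therefore suffices to exhibit a maximal pipedream $P_0$ with $\rwt(P_0)=\rajcode(w)$ and $\cwt(P_0)=\rajcode(w^{-1})$. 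Any other maximal $P$ then has $x^{\rwt(P)}y^{\cwt(P)}$ dominated componentwise in the $x$ and $y$ leading orders. We need to make sure that ``dominated in $x$-part and in $y$-part'' implies dominated under an arbitrary term order. That is not automatic, so we will use a stronger statement: every maximal $P$ satisfies $\rwt(P)\le\rajcode(w)$ in the revlex sense, with equality forcing $\cwt(P)=\rajcode(w^{-1})$.

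\textbf{Step 2.} We construct $P_0$ explicitly, following the Chou--Yu construction, and verify both weights using the increasing-subsequence characterization of $\rajcode$.

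\textbf{Step 3.} We prove uniqueness. This is where the coefficient $1$ comes from. Suppose two maximal pipedreams share row weight $\rajcode(w)$. We argue row by row from the bottom, since revlex is governed by the largest variables. The constraint that row $n$, then row $n-1$, and so on each hold the maximal possible number of crosses forces the cross positions greedily. Forcing of the row-$i$ crosses comes from the pipes already determined below.

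We expect the main obstacle to be the compatibility argument, passing from two separate single-variable leading terms to the joint term order and the uniqueness. Our first attempt would be to show that the bottom-up greedy filling maximizing row counts is forced cell by cell, so that its column weight is determined. This would also show that the monomial $x^{\rajcode(w)}y^{\rajcode(w^{-1})}$ divides-dominates all others in every admissible order.
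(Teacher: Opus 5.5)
The paper gives no proof of this statement. It is quoted from \cite{psw24} and used only as a black box, for the uniqueness half of Theorem~\ref{T: Main Theorem}. So your argument has to stand on its own, and it breaks at the step you flag yourself.

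First, a slip: $\fCM_w(\x;\mathbf{0})=0$, because every monomial $x^{\rwt(P)}y^{\cwt(P)}$ has positive $y$-degree. The correct specialization is $\y=\mathbf{1}$. This gives $\fCM_w(\x)$, because the same maximal pipedreams index both top-degree parts, and transposition gives $\fCM_w(\mathbf{1};\y)=\fCM_{w^{-1}}(\y)$.

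More seriously, the leading terms of these two specializations do not determine the joint leading term. For $x_2y_1+x_1y_2$ they are $x_2$ and $y_2$, yet the joint leading monomial is $x_2y_1$ or $x_1y_2$ depending on the order. So ``must project onto these'' is false in general. Your patch (every maximal $P$ has $\rwt(P)\le\rajcode(w)$ in revlex, with equality forcing $\cwt(P)=\rajcode(w^{-1})$) only controls orders that compare the $x$-exponents first, lexicographically from $x_n$ down; by symmetry you could also handle $y$-first orders. It says nothing about graded or weight orders that mix the two. Revlex maximality is genuinely weaker than what is needed even in one set of variables: $x_1x_3$ beats $x_2^2$ lexicographically from $x_3$, but loses in graded reverse lex with $x_3>x_2>x_1$.

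What the statement actually requires (and, by testing against weight orders, is equivalent to) is a dominance condition plus existence and uniqueness:
\begin{itemize}
\item for every maximal $P$ and every $m$, $\sum_{k\ge m}\rwt(P)_k\le\sum_{k\ge m}\rajcode(w)_k$;
\item likewise $\sum_{k\ge m}\cwt(P)_k\le\sum_{k\ge m}\rajcode(w^{-1})_k$;
\item there is exactly one maximal $P_0$ with both weights.
\end{itemize}
Given dominance, $x^{\rajcode(w)}/x^{\rwt(P)}$ is a product of ratios $x_j/x_i$ with $j>i$, and likewise in $y$. So the claimed monomial wins in every admissible order.

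None of your steps produces this. Step 2 only gives existence of $P_0$. To know $P_0$ is maximal you also need $\max|P|=|\rajcode(w)|$, which is the degree theorem. Step 3's bottom-up greedy forcing is asserted rather than proved, and it is the real content of the theorem.

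If you are willing to cite a single-variable result, you need its dominance form (valid for all term orders), applied to $w$ and to $w^{-1}$. Uniqueness would then follow from Chou--Yu's $P_0$ together with a coefficient-$1$ statement for $x^{\rajcode(w)}$ in $\fCM_w(\x)$. At that point, though, essentially the whole proof is imported from \cite{psw24}, the source of the statement itself.
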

	
	Pipedreams~\cite{bb93,bjs93,fk94} and marked bumpless pipedreams~\cite{lls21,lls23,weigandt21} are certain tilings labeled by permutations that compute $\fG_w(\x)$ and $\fG_w(\x;\y)$ combinatorially. The row and column weights of pipedreams (resp. marked bumpless pipedreams) are weak compositions that record the number of certain types of tiles in each row and column of the diagrams.
    Let $\PD(w)$ and $\MBPD(w)$ denote the sets of pipedreams and marked bumpless pipedreams labeled by $w$, respectively.
	
	Theorem~\ref{T: PSW} implies that there exists a unique pipedream (resp. marked bumpless pipedream) in $\PD(w)$ (resp. $\MBPD(w)$) whose row weight is $\rajcode(w)$ and column weight is $\rajcode(w^{-1})$, which we call the maximal pipedream (resp. marked bumpless pipedream). Pechenik, Speyer, and Weigandt asked in their paper for an explicit algorithm that constructs the maximal pipedream for any permutation, which was subsequently given by Chou and Yu~\cite{cy24}. In this paper, we provide an explicit algorithm that constructs the maximal marked bumpless pipedream starting with the snow Rothe pipedream $\SRPD(w)$ (see Section~\ref{s: rajcode and snow}).
	
	\begin{thm}
		\label{T: Main Theorem}
        For $w \in S_n$, let $\widehat{\mathrm{D}}(w)$ be the marked bumpless pipedream obtained by applying the algorithm in Section~\ref{s: algorithm} to the snow Rothe pipedream of $w$. Then $\widehat{\mathrm{D}}(w)$ is the unique maximal marked bumpless pipedream of $w$ with row weight $\rajcode(w)$ and column weight $\rajcode(w^{-1})$.
	\end{thm}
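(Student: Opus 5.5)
By Theorem~\ref{T: PSW}, the monomial $x^{\rajcode(w)}y^{\rajcode(w^{-1})}$ occurs in $\fCM_w(\x;\y)$ with coefficient $1$. Since $\fG_w(\x;\y)$ is a signed sum over $\MBPD(w)$ with each diagram contributing its row/column weight monomial, the maximal marked bumpless pipedream is unique. So it suffices to prove three things about the output $\widehat{\mathrm{D}}(w)$ of the algorithm: (i) it is a genuine element of $\MBPD(w)$; (ii) its row weight is $\rajcode(w)$; (iii) its column weight is $\rajcode(w^{-1})$. The uniqueness clause then follows at once. One point needs checking: sign cancellations could, a priori, mean several diagrams share this weight with signs summing to $1$. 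I would rule this out as in the paper's setup: all diagrams of top degree carry the same sign $(-1)^{\text{deg}-\ell(w)}$, so their number equals the coefficient, namely $1$.

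For (i) I would work step by step through the algorithm of Section~\ref{s: algorithm}. It starts from the snow Rothe pipedream $\SRPD(w)$, which is the Rothe bumpless pipedream decorated with the snow diagram information. I would formulate an invariant preserved by each local move (droop or marking step): the tiling remains a valid bumpless pipedream for $w$, the pipes still exit according to $w$, and marks sit only on allowed $\jtile$ tiles with no forbidden double crossings. Each move is a local modification, so this should reduce to a finite case check on the affected rectangle.

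For (ii) and (iii) I would use the snow diagram description of $\rajcode$ from Section~\ref{s: rajcode and snow}. There, $\rajcode(w)_i$ counts the diagram boxes in row $i$ together with the dark clouds and snowflakes charged to row $i$, and $\rajcode(w^{-1})$ admits the transposed description. I would track a bijection between blank and marked tiles of the intermediate diagrams and the boxes and snowflakes of $\Snow(w)$. The goal is to show that each move either relocates a blank tile within its row and column consistently, or creates exactly one new marked tile whose row and column correspond to a snowflake of the snow diagram. At termination, all snowflakes are realized, which gives both weights simultaneously. The symmetry $\Snow(w^{-1})=\Snow(w)^{T}$ should make the column statement follow from the row statement, together with the behaviour of the algorithm under transposition. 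If the algorithm is not transpose-symmetric, I would instead run a separate column count.

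The main obstacle will be (ii)/(iii): matching the weight contributed by marked tiles to snowflakes. This requires controlling where droops land relative to the dark clouds. My first attempt would be induction on the snow diagram processed row by row from the bottom, since both the algorithm and the snow construction proceed greedily along rows, and to prove a lemma that after handling row $i$ the partial diagram has exactly the prescribed weights in rows $\ge i$. If that fails, I would fall back on a degree argument. Showing that $\widehat{\mathrm{D}}(w)$ has total weight $\deg\fCM_w$ and that its row weight is $\ge\rajcode(w)$ in the term order would force equality by Theorem~\ref{T: PSW}.
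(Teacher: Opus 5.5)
Your reduction to (i)--(iii) and your uniqueness step match the paper's final argument: all diagrams with the maximal number of blank tiles enter with the same sign, so the coefficient $1$ in Theorem~\ref{T: PSW} counts them.

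The real problem is (iii), because the symmetry you want to use is false. For the paper's running example $w=251634$ one has $\cw(\Snow(w))=(2,0,3,4,0,0)$ but $\rajcode(w^{-1})=(3,2,2,2,0,0)$, so $\Snow(w^{-1})\neq\Snow(w)^{T}$. For the same reason, your proposed bookkeeping cannot work. If old blank tiles keep their row and column counts and each new blank or marked tile sits in the row \emph{and} column of a snowflake of $\Snow(w)$, the output has column weight $\cw(\Snow(w))$, which is wrong.

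Columns must instead be matched against the left snow diagram $\overleftarrow{\Snow}(w)$ of Definition~\ref{def}. It has the same dark clouds and the same number $t=num_{*}(p)$ of snowflakes on each pipe $p$, but places them in the terminating column $p$ rather than the starting row $w^{-1}(p)$. The missing idea is this per-pipe double count:
\begin{itemize}
\item drooping $p$ releases exactly $t$ blank tiles into row $w^{-1}(p)$ (Lemma~\ref{l1});
\item it also creates exactly $t$ new blank tiles in column $p$ (equation~\eqref{eq5});
\item every other tile change cancels within each row and within each column separately (Propositions~\ref{raj} and~\ref{columnraj}). This includes changes from passing through or contacting other pipes, and from the mini-undroops that long lines trigger, which your plan never mentions.
\end{itemize}

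Your plan for (i) also misjudges where the difficulty lies. Drooping redraws an entire pipe, and whether two pipes cross is a global property, so this is not a finite check on one rectangle. What keeps the permutation fixed is that a pair of pipes not crossing in $\RPD(w)$ is never made to cross, and this genuinely uses the snow diagram. The gap estimate in Lemma~\ref{l2} rests on the number of snowflakes on the upper pipe being bounded by the number on the lower pipe plus the dark clouds in the rows and columns separating them. Drooping further than $num_{*}$ allows would in general drive the upper pipe through the horizontal and then the vertical segment of the lower one. You also need that at most one p-cross forms between any two pipes (Lemma~\ref{l3}) and that mini-undroops yield valid tilings (Proposition~\ref{standard}).

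Finally, the term-order fallback does not rescue (ii)--(iii). With an order comparing the $x$-variables first, $\rwt(\widehat{\mathrm{D}}(w))\geq\rajcode(w)$ would pin down only the row weight. You would separately need $\cwt(\widehat{\mathrm{D}}(w))\geq\rajcode(w^{-1})$, and you give no mechanism for either inequality.
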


     Huang, Shimozono, and Yu~\cite{HSY24} established a canonical bijection between $\PD(w)$ and $\MBPD(w)$ that preserves row weights but not column weights. In fact, there does not exist a bijection between pipedreams and marked bumpless pipedreams that preserves both row and column weights. We remark that composing the constructions in~\cite{cy24} and~\cite{HSY24} does yield a bumpless pipedream with row weight $\rajcode(w)$, but not necessarily with column weight $\rajcode(w^{-1})$ (see Example~\ref{e:hsy}).
	
	The outline of the paper is as follows. In Section~\ref{S: Background}, we introduce the necessary background. In Section~\ref{S: Algorithm}, we describe the algorithm that constructs the maximal marked bumpless pipedream. In Section~\ref{S: Proof of main theorem}, we prove Theorem~\ref{T: Main Theorem}.

	\section*{Acknowledgement}
    This work was supported by the Natural Science Foundation of Tianjin City
 (Grant No. 25JCQNJC00250) and  the Natural Science Foundation of China (Grant No. 12001398). 
 
	We would like to thank Jack Chou, Peter Guo, Zachary Hamaker, and Tianyi Yu for helpful conversations. We thank Jack Chou for helping with earlier drafts and the exposition of this paper. 
	\section{Background}
	\label{S: Background}
	Let $S_n$ be the set of permutations on $[n] = \{1, \dots, n\}$. For an $n \times n$ grid, we label the rows from top to bottom and the columns from left to right, and $(i,j)$ indicates the cell in row $i$ and column $j$. A \defi{diagram} is a subset of the $n \times n$ grid. A \defi{weak composition} \(\alpha\) is a finite sequence of $\Z_{\geqslant 0}$. We write \(\alpha_i\) for its $i\textsuperscript{th}$ entry.

	\subsection{Marked bumpless pipedreams and Grothendieck polynomials}
	\begin{defn}[\cite{lls21,lls23,weigandt21}]
		A \defi{marked bumpless pipedream} is a tiling of the $n \times n$ grid using the tiles $\ptile$, $\jtile$, $\rtile$, $\htile$, $\vtile$, $\btile$, $\mtile$ such that
		\begin{enumerate}
			\item there are $n$ total pipes,
			\item each pipe starts from the bottom edge of the grid, 
			\item each pipe ends at the right edge of the grid.
		\end{enumerate}
		\end{defn}
		Each marked bumpless pipedream is associated with a permutation $w \in S_n$. We first label the pipes by the column they enter from the bottom, and trace the pipes until they exit to the right. We then read off the labeling on the right as the one-line notation of $w$. When we read the permutation \(w\) from the diagram, if a pair of pipes cross each other more than once, we ignore all crossings after the first one. We denote the set of marked bumpless pipedreams associated with $w$ by $\MBPD(w)$. 
        
		For a marked bumpless pipedream $P$, the \defi{row weight} of $P$ is the weak composition $\rwt(P) = (r_1, \dots, r_n)$ where $r_i$ is the number of $\btile$ and $\mtile$ in row $i$ of $P$. Similarly, the \defi{column weight} of $P$ is the weak composition $\cwt(P) = (c_1, \dots, c_n)$ where $c_j$ is the number of $\btile$ and $\mtile$ in column $j$ of $P$. 

	\begin{example}
		The following are two marked bumpless pipedreams $P_1, P_2 \in \MBPD(251634)$. 
		$$
		\begin{tikzpicture}[scale = 0.5][x=1.5em,y=1.5em,thick,color=blue]
    \def\squarepath{-- +(6,0) -- +(6,6) -- +(0,6) -- cycle}
		\draw (0mm,0mm)\squarepath;
		\draw [step=1,dotted] (0,0) grid (6,6);
			\draw[thick] (0.5,0)--(0.5,3.5)--(6,3.5);
			\draw[thick] (1.5,0)--(1.5,5.5)--(6,5.5);
			\draw[thick] (2.5,0)--(2.5,1.5)--(6,1.5);
			\draw[thick] (3.5,0)--(3.5,0.5)--(6,0.5);
			\draw[thick] (4.5,0)--(4.5,4.5)--(6,4.5);
			\draw[thick] (5.5,0)--(5.5,2.5)--(6,2.5);
			\node[color=black] at (6.5,5.5) {$2$};
			\node[color=black] at (6.5,4.5) {$5$};
			\node[color=black] at (6.5,3.5) {$1$};
			\node[color=black] at (6.5,2.5) {$6$};
			\node[color=black] at (6.5,1.5) {$3$};
			\node[color=black] at (6.5,0.5) {$4$};
			\node[color=black] at (0.5,-0.5) {$1$};
			\node[color=black] at (1.5,-0.5) {$2$};
			\node[color=black] at (2.5,-0.5) {$3$};
			\node[color=black] at (3.5,-0.5) {$4$};
			\node[color=black] at (4.5,-0.5) {$5$};
			\node[color=black] at (5.5,-0.5) {$6$};
    \filldraw [fill=gray!20!white, draw=black!20!black] (0,5) rectangle (1,6);
    \filldraw [fill=gray!20!white, draw=black!20!black] (0,4) rectangle (1,5);
    \filldraw [fill=gray!20!white, draw=black!20!black] (2,4) rectangle (3,5);
    \filldraw [fill=gray!20!white, draw=black!20!black] (2,2) rectangle (3,3);
    \filldraw [fill=gray!20!white, draw=black!20!black] (3,4) rectangle (4,5);
    \filldraw [fill=gray!20!white, draw=black!20!black] (3,2) rectangle (4,3);
    \node[color=black] at (3,-1.5) {$P_1$};
		\end{tikzpicture}
		\quad \quad
		\begin{tikzpicture}[scale = 0.5]
    \def\squarepath{-- +(6,0) -- +(6,6) -- +(0,6) -- cycle}
		\draw (0mm,0mm)\squarepath;
		\draw [step=1,dotted] (0,0) grid (6,6);
			\draw[step=1,gray,ultra thin] (0,0) grid (6,6);
        \def\rectanglepath{[fill=gray!20!white,draw=black!20!black]-- +(1,0) -- +(1,1) -- +(0,1) -- cycle}
		\draw (0,4) \rectanglepath;
        \draw (0,5) \rectanglepath;
        \draw (1,4) \rectanglepath;
        \draw (1,5) \rectanglepath;
        \draw (2,5) \rectanglepath;
        \draw (2,4) \rectanglepath;
        \draw (0,3) \rectanglepath;
			\draw[thick] (0.5,0)--(0.5,2.5)--(3.5,2.5)--(3.5,5.5)--(6,5.5);
			\draw[thick] (1.5,0)--(1.5,3.5)--(6,3.5);
			\draw[thick] (2.5,0)--(2.5,1.5)--(6,1.5);
			\draw[thick] (3.5,0)--(3.5,0.5)--(6,0.5);
			\draw[thick] (4.5,0)--(4.5,4.5)--(6,4.5);
			\draw[thick] (5.5,0)--(5.5,2.5)--(6,2.5);
			\node[color=black] at (6.5,5.5) {$2$};
			\node[color=black] at (6.5,4.5) {$5$};
			\node[color=black] at (6.5,3.5) {$1$};
			\node[color=black] at (6.5,2.5) {$6$};
			\node[color=black] at (6.5,1.5) {$3$};
			\node[color=black] at (6.5,0.5) {$4$};
			\node[color=black] at (0.5,-0.5) {$1$};
			\node[color=black] at (1.5,-0.5) {$2$};
			\node[color=black] at (2.5,-0.5) {$3$};
			\node[color=black] at (3.5,-0.5) {$4$};
			\node[color=black] at (4.5,-0.5) {$5$};
			\node[color=black] at (5.5,-0.5) {$6$};
    \filldraw[black] (3.5,2.5) circle (3pt);
    \node[color=black] at (3,-1.5) {$P_2$};
		\end{tikzpicture}
		$$
		The weights of the marked bumpless pipedreams are 
		\begin{align*}
			\rwt(P_1) = (1,3,0,2,0,0), \quad \rwt(P_2) = (3,3,1,1,0,0),\\
			\cwt(P_1) = (2,0,2,2,0,0), \quad \cwt(P_2) = (3,2,2,1,0,0).
		\end{align*}
	\end{example}
	
	\begin{thm}[\cite{weigandt21}, Theorem 1.1]
		The \defi{Grothendieck polynomials} $\fG_w(\x)$ and \defi{double Grothendieck polynomials} $\fG_w(\x; \y)$  are given by the following weighted sums: 
		\begin{align*}
			\fG_w(\x) &:=\sum_{P \in \MBPD(w)}  (-1)^{\#\{ \btile\, ,\, \mtile\}-\ell(w)}\prod_{(i,j)\in  \btile\, , \, \mtile}x_i,\\
			\fG_w(\x;\y)&: =\sum_{P \in \MBPD(w)}  (-1)^{\#\{ \btile\, , \, \mtile\}-\ell(w)}\prod_{(i,j)\in  \btile\, , \, \mtile}(x_i+y_j-x_iy_j).
		\end{align*}
	\end{thm}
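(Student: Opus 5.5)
The statement immediately above is Weigandt's theorem, which the paper takes as a citation. Any proof should therefore follow the bumpless pipedream route in \cite{weigandt21} and its marked extension in \cite{lls21,lls23}. The plan is to show that the right-hand side, call it $F_w(\x;\y)$, satisfies the characterizing recursion of double Grothendieck polynomials. The base case is $F_{w_0}=\prod_{i+j\le n}(x_i+y_j-x_iy_j)$, and for a descent $s_k$ of $w$ the identity to establish is
\[
\pi_k F_w = F_{ws_k}, \qquad \pi_k f=\frac{(1-x_{k+1})f-(1-x_k)s_kf}{x_k-x_{k+1}}.
\]
Since the isobaric divided differences generate all $\fG_w$ from $\fG_{w_0}$ and determine them uniquely, this identifies $F_w$ with $\fG_w(\x;\y)$. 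Setting $\y=0$ then gives the single formula, because each factor $x_i+y_j-x_iy_j$ specializes to $x_i$.

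\textbf{Base case.} I will show that $\MBPD(w_0)$ consists of a single diagram. In it the blank tiles fill exactly the staircase $\{(i,j):i+j\le n\}$, there are no marked tiles, and the pipes turn along the antidiagonal. The reason is that pipe $j$ must travel from the bottom of column $j$ to the right edge at row $n+1-j$, and every pair of pipes must cross, so there is no slack. The number of blank tiles is $\binom n2=\ell(w_0)$, so the sign is $+1$ and the weight is the product above.

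\textbf{Inductive step.} Fix $k$ with $w(k)>w(k+1)$, and look only at rows $k$ and $k+1$ of each diagram. I will group $\MBPD(w)\sqcup\MBPD(ws_k)$ into classes according to the tiling outside these two rows. Within a class, the two-row configurations form small families that are related by droop and undroop moves confined to the two rows, and by adding or removing marks on $\jtile$ tiles. For each class, the identity $\pi_k(\text{sum over the $w$-class})=\text{sum over the $ws_k$-class}$ should reduce to a one-variable-pair computation, in the same way Weigandt's column-by-column analysis does. The factor $(x_i+y_j-x_iy_j)$ behaves like $1-(1-x_i)(1-y_j)$, so $\pi_k$ acts cleanly on products over the blank tiles in rows $k,k+1$ of a fixed column set. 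The $y_j$ are symmetric under $s_k$, so they pass through $\pi_k$ as coefficients. The marks contribute the sign $(-1)^{\#\text{marks}}$ and the $K$-theoretic correction terms, which are exactly what the isobaric operator produces beyond the cohomological divided difference.

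\textbf{Main obstacle.} The hard part is the local two-row bijection. It has to identify segments of rows $k,k+1$ where the tiles are ``free'' (exchangeable blanks and pipe configurations) and segments where they are ``locked''. The free blank tiles contribute a symmetric factor. The difference between the two rows then produces geometric sums of the form $\sum x_k^a x_{k+1}^b$, and applying $\pi_k$ to these should yield exactly the $ws_k$ configurations together with their marked variants. I would first handle the case with no marks, which is the Schubert case and Weigandt's earlier argument. I would then check that marked $\jtile$ tiles account precisely for the terms $-x_kx_{k+1}$ produced by $\pi_k$ acting on $(1-x_{k+1})$ versus $\partial_k$. This verification is the crux.
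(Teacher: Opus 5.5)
The paper gives no proof of this statement; it is quoted from \cite{weigandt21}, so your outline has to stand on its own. Its framework is sound. The initial value $\fG_{w_0}(\x;\y)=\prod_{i+j\le n}(x_i+y_j-x_iy_j)$, together with $\fG_{ws_k}=\pi_k\fG_w$ for $w(k)>w(k+1)$, determines every $\fG_w(\x;\y)$, and setting $\y=0$ then gives the single version. The base case is also fine. To make ``no slack'' precise, use three counts:
\begin{itemize}
\item in any bumpless pipedream the number of blank tiles equals the number of crossing tiles (count pipe--edge incidences);
\item each row contains exactly one more \rtile\ than \jtile;
\item $w_0$ needs at least $\binom n2$ crossings.
\end{itemize}
Together these force the $n^2$ tiles to form the Rothe pipedream, with no \jtile\ and hence no marks.

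The problem is the inductive step $\pi_kF_w=F_{ws_k}$. It is the entire content of the theorem, and you only assert it (``should reduce'', ``should yield'', ``this verification is the crux''). As written, this is a plan rather than a proof.

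Moreover, the mechanism you sketch cannot supply that step.
\begin{itemize}
\item Droops, undroops and adding or removing marks all preserve the permutation. So they never relate a configuration in $\MBPD(w)$ to one in $\MBPD(ws_k)$. You need an explicit permutation-changing operation in rows $k,k+1$ (exchanging which of the pipes $w(k)$, $w(k+1)$ exits in row $k$), together with its weight bookkeeping.
\item Fixing the tiling outside rows $k,k+1$ makes only the outside factor $s_k$-symmetric. The weight of the blank and marked tiles inside rows $k,k+1$ is in general not $s_k$-symmetric, and $\pi_k$ is linear only over $s_k$-symmetric functions. So $\pi_k$ does not act ``cleanly'' there. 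Computing $\pi_k$ of the two-row sum for arbitrary boundary data, and matching it term by term with the $ws_k$-configurations with the same boundary, is precisely the missing lemma.
\item Your heuristic for the marks is also off. One has $\pi_kf-\partial_kf=-\partial_k(x_{k+1}f)$, which is not a correction by $-x_kx_{k+1}$ terms.
\end{itemize}

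To carry out a two-row argument, state the identity between two-row partition functions for each fixed labeled boundary state (top edge of row $k$, bottom edge of row $k+1$), and prove it. The standard tool for such an identity is a Yang--Baxter (train) argument for the colored six-vertex model underlying bumpless pipedreams, as in Buciumas and Scrimshaw's colored lattice model for double Grothendieck polynomials. Otherwise, follow the proof in \cite{weigandt21}.
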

		
	\begin{example}
		\label{e: Grothendieck}
		For $w = 1423$, there are five marked bumpless pipedreams in $\MBPD(w)$.
		$$
		\begin{tikzpicture}[scale=0.5]
    \def\squarepath{-- +(4,0) -- +(4,4) -- +(0,4) -- cycle}
		\draw (0mm,0mm)\squarepath;
		\draw [step=1,dotted] (0,0) grid (4,4);
        \def\rectanglepath{[fill=gray!20!white,draw=black!20!black]-- +(1,0) -- +(1,1) -- +(0,1) -- cycle}
		\draw (1,2) \rectanglepath;
        \draw (2,2) \rectanglepath;
        
			\draw[step=1,gray,ultra thin] (0,0) grid (4,4);
			\draw[thick] (0.5,0)--(0.5,3.5)--(4,3.5);
			\draw[thick] (1.5,0)--(1.5,1.5)--(4,1.5);
			\draw[thick] (2.5,0)--(2.5,0.5)--(4,0.5);
			\draw[thick] (3.5,0)--(3.5,2.5)--(4,2.5);
			\node[color=black] at (4.5,3.5) {$1$};
			\node[color=black] at (4.5,2.5) {$4$};
			\node[color=black] at (4.5,1.5) {$2$};
			\node[color=black] at (4.5,0.5) {$3$};
			\node[color=black] at (0.5,-0.5) {$1$};
			\node[color=black] at (1.5,-0.5) {$2$};
			\node[color=black] at (2.5,-0.5) {$3$};
			\node[color=black] at (3.5,-0.5) {$4$};
		\end{tikzpicture}
		\quad \quad 
		\begin{tikzpicture}[scale=0.5]
    \def\squarepath{-- +(4,0) -- +(4,4) -- +(0,4) -- cycle}
		\draw (0mm,0mm)\squarepath;
		\draw [step=1,dotted] (0,0) grid (4,4);
		\def\rectanglepath{[fill=gray!20!white,draw=black!20!black]-- +(1,0) -- +(1,1) -- +(0,1) -- cycle}
        \draw (2,2) \rectanglepath;
        \draw (0,3) \rectanglepath;
        
			\draw[thick] (0.5,0)--(0.5,2.5)--(1.5,2.5)--(1.5,3.5)--(4,3.5);
			\draw[thick] (1.5,0)--(1.5,1.5)--(4,1.5);
			\draw[thick] (2.5,0)--(2.5,0.5)--(4,0.5);
			\draw[thick] (3.5,0)--(3.5,2.5)--(4,2.5);
			\node[color=black] at (4.5,3.5) {$1$};
			\node[color=black] at (4.5,2.5) {$4$};
			\node[color=black] at (4.5,1.5) {$2$};
			\node[color=black] at (4.5,0.5) {$3$};
			\node[color=black] at (0.5,-0.5) {$1$};
			\node[color=black] at (1.5,-0.5) {$2$};
			\node[color=black] at (2.5,-0.5) {$3$};
			\node[color=black] at (3.5,-0.5) {$4$};
       
		\end{tikzpicture}
\quad \quad 
		\begin{tikzpicture}[scale=0.5]
    \def\squarepath{-- +(4,0) -- +(4,4) -- +(0,4) -- cycle}
		\draw (0mm,0mm)\squarepath;
		\draw [step=1,dotted] (0,0) grid (4,4);
        \def\rectanglepath{[fill=gray!20!white,draw=black!20!black]-- +(1,0) -- +(1,1) -- +(0,1) -- cycle}
		\draw (1,3) \rectanglepath;
        \draw (0,3) \rectanglepath;
			\draw[thick] (0.5,0)--(0.5,2.5)--(2.5,2.5)--(2.5,3.5)--(4,3.5);
			\draw[thick] (1.5,0)--(1.5,1.5)--(4,1.5);
			\draw[thick] (2.5,0)--(2.5,0.5)--(4,0.5);
			\draw[thick] (3.5,0)--(3.5,2.5)--(4,2.5);
			\node[color=black] at (4.5,3.5) {$1$};
			\node[color=black] at (4.5,2.5) {$4$};
			\node[color=black] at (4.5,1.5) {$2$};
			\node[color=black] at (4.5,0.5) {$3$};
			\node[color=black] at (0.5,-0.5) {$1$};
			\node[color=black] at (1.5,-0.5) {$2$};
			\node[color=black] at (2.5,-0.5) {$3$};
			\node[color=black] at (3.5,-0.5) {$4$};
          
		\end{tikzpicture}
		\quad \quad 
        \begin{tikzpicture}[scale=0.5]
    \def\squarepath{-- +(4,0) -- +(4,4) -- +(0,4) -- cycle}
		\draw (0mm,0mm)\squarepath;
		\draw [step=1,dotted] (0,0) grid (4,4);
		\def\rectanglepath{[fill=gray!20!white,draw=black!20!black]-- +(1,0) -- +(1,1) -- +(0,1) -- cycle}
        \draw (2,2) \rectanglepath;
        \draw (0,3) \rectanglepath;
        
			\draw[thick] (0.5,0)--(0.5,2.5)--(1.5,2.5)--(1.5,3.5)--(4,3.5);
			\draw[thick] (1.5,0)--(1.5,1.5)--(4,1.5);
			\draw[thick] (2.5,0)--(2.5,0.5)--(4,0.5);
			\draw[thick] (3.5,0)--(3.5,2.5)--(4,2.5);
			\node[color=black] at (4.5,3.5) {$1$};
			\node[color=black] at (4.5,2.5) {$4$};
			\node[color=black] at (4.5,1.5) {$2$};
			\node[color=black] at (4.5,0.5) {$3$};
			\node[color=black] at (0.5,-0.5) {$1$};
			\node[color=black] at (1.5,-0.5) {$2$};
			\node[color=black] at (2.5,-0.5) {$3$};
			\node[color=black] at (3.5,-0.5) {$4$};
            \filldraw[black] (1.5,2.5) circle (3pt);
		\end{tikzpicture}
		\quad \quad 
		\begin{tikzpicture}[scale=0.5]
    \def\squarepath{-- +(4,0) -- +(4,4) -- +(0,4) -- cycle}
		\draw (0mm,0mm)\squarepath;
		\draw [step=1,dotted] (0,0) grid (4,4);
        \def\rectanglepath{[fill=gray!20!white,draw=black!20!black]-- +(1,0) -- +(1,1) -- +(0,1) -- cycle}
		\draw (1,3) \rectanglepath;
        \draw (0,3) \rectanglepath;
			\draw[thick] (0.5,0)--(0.5,2.5)--(2.5,2.5)--(2.5,3.5)--(4,3.5);
			\draw[thick] (1.5,0)--(1.5,1.5)--(4,1.5);
			\draw[thick] (2.5,0)--(2.5,0.5)--(4,0.5);
			\draw[thick] (3.5,0)--(3.5,2.5)--(4,2.5);
			\node[color=black] at (4.5,3.5) {$1$};
			\node[color=black] at (4.5,2.5) {$4$};
			\node[color=black] at (4.5,1.5) {$2$};
			\node[color=black] at (4.5,0.5) {$3$};
			\node[color=black] at (0.5,-0.5) {$1$};
			\node[color=black] at (1.5,-0.5) {$2$};
			\node[color=black] at (2.5,-0.5) {$3$};
			\node[color=black] at (3.5,-0.5) {$4$};
            \filldraw[black] (2.5,2.5) circle (3pt);
		\end{tikzpicture}
		$$
		Therefore, the Grothendieck polynomial and double Grothendieck polynomial of $w$ are
		\begin{align*}
			\fG_w(\x) = &\, x_2^2 + x_1x_2 + x_1^2 - x_1x_2^2 - x_1^2x_2,\\
			\fG_w(\x; \y) = &\,(x_2 + y_2 - x_2y_2)(x_2 + y_3 - x_2y_3) \\
			+ &(x_1 + y_1 - x_1y_1)(x_2 + y_3 - x_2y_3)\\
			+ &(x_1 + y_1 - x_1y_1)(x_1 + y_2 - x_1y_2)\\
            - &(x_1 + y_1 - x_1y_1)(x_2 + y_2 - x_2y_2)(x_2 + y_3 - x_2y_3)\\
            - &(x_1 + y_1 - x_1y_1)(x_1 + y_2 - x_1y_2)(x_2 + y_3 - x_2y_3).
		\end{align*}
	\end{example}
	Let $\widehat{\MBPD}(w)$ denote the set of marked bumpless pipedreams in $\MBPD(w)$ with maximal number of $\btile$ and $\mtile$. We obtain the \defi{(double) Castelnuovo–Mumford polynomials} by taking the highest degree terms in each product in the formula above from marked bumpless pipedreams in $\widehat{\MBPD}(w)$.
	\begin{align*}
		\fCM_w(\x) &= \sum_{P \in \widehat{\MBPD}(w)} \prod_{(i,j) = \btile, \mtile} x_i =  \sum_{P \in \widehat{\MBPD}(w)} x^{\rwt(P)}\\
		\fCM_w(\x; \y) &= \sum_{P \in \widehat{\MBPD}(w)} \prod_{(i,j) = \btile, \mtile} x_iy_j =  \sum_{P \in \widehat{\MBPD}(w)} x^{\rwt(P)}y^{\cwt(P)}
	\end{align*}
	
	\begin{example}
		Continuing Example \ref{e: Grothendieck}, let $w  = 1423$. There are two marked bumpless pipedreams in $\widehat{\MBPD}(w)$.
		$$
		\begin{tikzpicture}[scale=0.5]
    \def\squarepath{-- +(4,0) -- +(4,4) -- +(0,4) -- cycle}
		\draw (0mm,0mm)\squarepath;
		\draw [step=1,dotted] (0,0) grid (4,4);
        \def\rectanglepath{[fill=gray!20!white,draw=black!20!black]-- +(1,0) -- +(1,1) -- +(0,1) -- cycle}
        \draw (2,2) \rectanglepath;
        \draw (0,3) \rectanglepath;
			\draw[thick] (0.5,0)--(0.5,2.5)--(1.5,2.5)--(1.5,3.5)--(4,3.5);
			\draw[thick] (1.5,0)--(1.5,1.5)--(4,1.5);
			\draw[thick] (2.5,0)--(2.5,0.5)--(4,0.5);
			\draw[thick] (3.5,0)--(3.5,2.5)--(4,2.5);
			\node[color=black] at (4.5,3.5) {$1$};
			\node[color=black] at (4.5,2.5) {$4$};
			\node[color=black] at (4.5,1.5) {$2$};
			\node[color=black] at (4.5,0.5) {$3$};
			\node[color=black] at (0.5,-0.5) {$1$};
			\node[color=black] at (1.5,-0.5) {$2$};
			\node[color=black] at (2.5,-0.5) {$3$};
			\node[color=black] at (3.5,-0.5) {$4$};
            \filldraw[black] (1.5,2.5) circle (3pt);
		\end{tikzpicture}
		\quad \quad 
		\begin{tikzpicture}[scale=0.5]
    \def\squarepath{-- +(4,0) -- +(4,4) -- +(0,4) -- cycle}
		\draw (0mm,0mm)\squarepath;
		\draw [step=1,dotted] (0,0) grid (4,4);
        \def\rectanglepath{[fill=gray!20!white,draw=black!20!black]-- +(1,0) -- +(1,1) -- +(0,1) -- cycle}
		\draw (1,3) \rectanglepath;
        \draw (0,3) \rectanglepath;
			\draw[thick] (0.5,0)--(0.5,2.5)--(2.5,2.5)--(2.5,3.5)--(4,3.5);
			\draw[thick] (1.5,0)--(1.5,1.5)--(4,1.5);
			\draw[thick] (2.5,0)--(2.5,0.5)--(4,0.5);
			\draw[thick] (3.5,0)--(3.5,2.5)--(4,2.5);
			\node[color=black] at (4.5,3.5) {$1$};
			\node[color=black] at (4.5,2.5) {$4$};
			\node[color=black] at (4.5,1.5) {$2$};
			\node[color=black] at (4.5,0.5) {$3$};
			\node[color=black] at (0.5,-0.5) {$1$};
			\node[color=black] at (1.5,-0.5) {$2$};
			\node[color=black] at (2.5,-0.5) {$3$};
			\node[color=black] at (3.5,-0.5) {$4$};
            \filldraw[black] (2.5,2.5) circle (3pt);
		\end{tikzpicture}
		$$
		Therefore, the Castelnuovo–Mumford polynomial and double Castelnuovo–Mumford polynomial of $w$ are
		\begin{align*}
			\fCM_w(\x) = \,x_1x_2^2 + x_1^2x_2 \quad \text{ and } \quad \fCM_w(\x;\y) = \,x_1x_2^2y_1y_2y_3 + x_1^2x_2y_1y_2y_3.
		\end{align*}
	\end{example}
	\begin{rem}
		Since the signs of monomials in a (double) Grothendieck polynomial alternate with degree, it could be the case that its highest degree homogeneous component consists of monomials with all negative coefficients (see Example~\ref{e: Grothendieck}). For the purposes of this paper, only the supports and weights of the top-degree terms are relevant. We therefore adopt the monomial-positive convention for the (double) Castelnuovo--Mumford polynomials.
	\end{rem}
	
	\subsection{Rajchgot code and snow diagrams}
	\label{s: rajcode and snow}
    Pechenik, Speyer, and Weigandt~\cite{psw24} defined the Rajchgot code, denoted by \(\rajcode(\cdot)\), to describe the leading monomial of \(\fCM_w(x)\) under reverse lexicographic order.
    For $w \in S_n$, they computed $\rajcode(w)$ by considering longest increasing subsequences in the one-line notation of $w$. In this paper, we use another combinatorial formula introduced by Pan and Yu as our definition of $\rajcode(w)$.
	
    The \defi{Rothe diagram} of a permutation $w$ is the diagram $\Rothe(w) = \{(i,w(j)): i < j, w(i) > w(j)\}.$ For every $w\in S_n$, there exists a unique marked bumpless pipedream whose $\btile$ are exactly $\Rothe(w)$. We call this the \defi{Rothe pipedream} of $w$ and denote it by $\RPD(w)$. It is also the unique marked bumpless pipedream that does not use any $\jtile$ or $\mtile$.
	
	\begin{example}
		For $w = 251634$, the diagrams $\Rothe(w)$ and $\RPD(w)$ are shown below.
		$$
		\begin{tikzpicture}[scale = 0.5][x=1.5em,y=1.5em,thick,color=blue]
    \def\squarepath{-- +(6,0) -- +(6,6) -- +(0,6) -- cycle}
		\draw (0mm,0mm)\squarepath;
		\draw [step=1,dotted] (0,0) grid (6,6);
    \filldraw [fill=gray!20!white, draw=black!20!black] (0,5) rectangle (1,6);
    \filldraw [fill=gray!20!white, draw=black!20!black] (0,4) rectangle (1,5);
    \filldraw [fill=gray!20!white, draw=black!20!black] (2,4) rectangle (3,5);
    \filldraw [fill=gray!20!white, draw=black!20!black] (2,2) rectangle (3,3);
    \filldraw [fill=gray!20!white, draw=black!20!black] (3,4) rectangle (4,5);
    \filldraw [fill=gray!20!white, draw=black!20!black] (3,2) rectangle (4,3);
    \node[color=black] at (0.5,-0.5) {$\quad$};
    \node[color=black] at (3,-1.5) {$\Rothe(w)$};
\end{tikzpicture}
		\quad \quad \quad \quad 
		\begin{tikzpicture}[scale=0.5]
    \def\squarepath{-- +(6,0) -- +(6,6) -- +(0,6) -- cycle}
		\draw (0mm,0mm)\squarepath;
		\draw [step=1,dotted] (0,0) grid (6,6);
        \filldraw [fill=gray!20!white, draw=black!20!black] (0,5) rectangle (1,6);
    \filldraw [fill=gray!20!white, draw=black!20!black] (0,4) rectangle (1,5);
    \filldraw [fill=gray!20!white, draw=black!20!black] (2,4) rectangle (3,5);
    \filldraw [fill=gray!20!white, draw=black!20!black] (2,2) rectangle (3,3);
    \filldraw [fill=gray!20!white, draw=black!20!black] (3,4) rectangle (4,5);
    \filldraw [fill=gray!20!white, draw=black!20!black] (3,2) rectangle (4,3);
    
    \draw[thick] (0.5,0)--(0.5,3.5)--(6,3.5);
    \draw[thick] (1.5,0)--(1.5,5.5)--(6,5.5);
    \draw[thick] (2.5,0)--(2.5,1.5)--(6,1.5);
    \draw[thick] (3.5,0)--(3.5,0.5)--(6,0.5);
    \draw[thick] (4.5,0)--(4.5,4.5)--(6,4.5);
    \draw[thick] (5.5,0)--(5.5,2.5)--(6,2.5);
    
    \node[color=black] at (6.5,5.5) {$2$};
    \node[color=black] at (6.5,4.5) {$5$};
    \node[color=black] at (6.5,3.5) {$1$};
    \node[color=black] at (6.5,2.5) {$6$};
    \node[color=black] at (6.5,1.5) {$3$};
    \node[color=black] at (6.5,0.5) {$4$};
    \node[color=black] at (0.5,-0.5) {$1$};
    \node[color=black] at (1.5,-0.5) {$2$};
    \node[color=black] at (2.5,-0.5) {$3$};
    \node[color=black] at (3.5,-0.5) {$4$};
    \node[color=black] at (4.5,-0.5) {$5$};
    \node[color=black] at (5.5,-0.5) {$6$};
    \node[color=black] at (3,-1.5) {$\RPD(w)$};
\end{tikzpicture}
		$$
	\end{example}
    
	For a diagram $D$, we define the \defi{dark clouds} of $D$ to be the subset $\dark(D) \subseteq D$ constructed as follows: Scan through $D$ from bottom to top. For each row $r$, if there exists $(r,c) \in D$ such that currently there are no cells in column $c$ of $\dark(D)$, we find the largest such $c$ and put $(r,c)$ in $\dark(D)$.

    \begin{example}
        
        For $w=251634$, the diagrams $\Rothe(w)$ and $\dark(\Rothe(w))$ are shown below.
        $$
\begin{tikzpicture}[scale = 0.5][x=1.5em,y=1.5em,thick,color=blue]
    \def\squarepath{-- +(6,0) -- +(6,6) -- +(0,6) -- cycle}
		\draw (0mm,0mm)\squarepath;
		\draw [step=1,dotted] (0,0) grid (6,6);
    \filldraw [fill=gray!20!white, draw=black!20!black] (0,5) rectangle (1,6);
    \filldraw [fill=gray!20!white, draw=black!20!black] (0,4) rectangle (1,5);
    \filldraw [fill=gray!20!white, draw=black!20!black] (2,4) rectangle (3,5);
    \filldraw [fill=gray!20!white, draw=black!20!black] (2,2) rectangle (3,3);
    \filldraw [fill=gray!20!white, draw=black!20!black] (3,4) rectangle (4,5);
    \filldraw [fill=gray!20!white, draw=black!20!black] (3,2) rectangle (4,3);
    \node[color=black] at (3,-0.8) {$\Rothe(w)$};
\end{tikzpicture}
\quad \quad \quad \quad 
\begin{tikzpicture}[scale = 0.5][x=1.5em,y=1.5em,thick,color=blue]
    \def\squarepath{-- +(6,0) -- +(6,6) -- +(0,6) -- cycle}
		\draw (0mm,0mm)\squarepath;
		\draw [step=1,dotted] (0,0) grid (6,6);
    \filldraw [black] (0,5) rectangle (1,6);
    \filldraw [black] (2,4) rectangle (3,5);
    \filldraw [black] (3,2) rectangle (4,3);
    \node[color=black] at (3,-0.8) {$\dark(\Rothe(w))$};
\end{tikzpicture}
        $$
    \end{example}
		
	The \defi{snow diagram} of $D$, denoted as $\Snow(D)$, is the diagram obtained by filling in all empty cells above $\dark(D)$ in $D$. The \defi{left snow diagram} of $D$,  denoted as $\overleftarrow{\Snow}(D)$, is the diagram obtained by filling in all empty cells to the left of $\dark(D)$ in $D$. We call these additional cells \defi{snow cells}. For $w \in S_n$, we further define $\Snow(w) := \Snow(\Rothe(w))$ and $\overleftarrow{\Snow}(w) := \overleftarrow{\Snow}(\Rothe(w))$. For each fixed pipe, the number of {\large{$*$}} in $\Snow(w)$ is equal to that in $\overleftarrow{\Snow}(w)$. Define $\SRPD(w)$ as the diagram obtained by overlaying the $\Snow(w)$ and the $\RPD(w)$, i.e., a diagram that contains both pipes and {\large{$*$}}. 
	
	\begin{example}
		\label{e: snow}
Continuing the previous example, we illustrate $\Snow(w)$ , $\overleftarrow{\Snow}(w)$ and $\SRPD(w)$ for $w = 251634$. For clarity, dark clouds are represented by black cells, while snow cells in the (left) snow diagrams are marked by stars.
		$$
		\begin{tikzpicture}[scale = 0.5][x=1.5em,y=1.5em,thick,color=blue]
    
    \def\squarepath{-- +(6,0) -- +(6,6) -- +(0,6) -- cycle}
		\draw (0mm,0mm)\squarepath;
		\draw [step=1,dotted] (0,0) grid (6,6);
    
    \filldraw [black] (0,5) rectangle (1,6);  
    \filldraw [fill=gray!20!white, draw=black!20!black]  (0,4) rectangle (1,5);   
    \filldraw [black] (2,4) rectangle (3,5);   
    \filldraw [fill=gray!20!white, draw=black!20!black]  (2,2) rectangle (3,3);   
    \filldraw [fill=gray!20!white, draw=black!20!black]  (3,4) rectangle (4,5);  
    \filldraw [black] (3,2) rectangle (4,3);   
    
    \node[black, scale=1.5] at (2.5,5.5) {$*$};
    \node[black, scale=1.5] at (3.5,5.5) {$*$};
    \node[black, scale=1.5] at (3.5,3.5) {$*$};
    \node[color=black] at (3,-0.8) {$\Snow(w)$};
\end{tikzpicture}
		\quad\quad\quad\quad
		\begin{tikzpicture}[scale = 0.5][x=1.5em,y=1.5em,thick,color=blue]
    \def\squarepath{-- +(6,0) -- +(6,6) -- +(0,6) -- cycle}
		\draw (0mm,0mm)\squarepath;
		\draw [step=1,dotted] (0,0) grid (6,6);
    
    \filldraw [black] (0,5) rectangle (1,6);   
    \filldraw [fill=gray!20!white, draw=black!20!black]  (0,4) rectangle (1,5);   
    \filldraw [black] (2,4) rectangle (3,5);  
    \filldraw [fill=gray!20!white, draw=black!20!black]  (2,2) rectangle (3,3);   
    \filldraw [fill=gray!20!white, draw=black!20!black]  (3,4) rectangle (4,5);   
    \filldraw [black] (3,2) rectangle (4,3);   
    
    \node[black, scale=1.5] at (0.5,2.5) {$*$};
    \node[black, scale=1.5] at (1.5,2.5) {$*$};
    \node[black, scale=1.5] at (1.5,4.5) {$*$};
    \node[color=black] at (3,-0.8) {$\overleftarrow{\Snow}(w)$};
\end{tikzpicture}
\quad\quad\quad\quad
\begin{tikzpicture}[scale = 0.5][x=1.5em,y=1.5em,thick,color=blue]
    
    \def\squarepath{-- +(6,0) -- +(6,6) -- +(0,6) -- cycle}
		\draw (0mm,0mm)\squarepath;
		\draw [step=1,dotted] (0,0) grid (6,6);
    
    \filldraw [fill=gray!20!white, draw=black!20!black] (0,5) rectangle (1,6);  
    \filldraw [fill=gray!20!white, draw=black!20!black]  (0,4) rectangle (1,5);   
    \filldraw [fill=gray!20!white, draw=black!20!black] (2,4) rectangle (3,5);   
    \filldraw [fill=gray!20!white, draw=black!20!black]  (2,2) rectangle (3,3);   
    \filldraw [fill=gray!20!white, draw=black!20!black]  (3,4) rectangle (4,5);  
    \filldraw [fill=gray!20!white, draw=black!20!black] (3,2) rectangle (4,3);   
    \draw[thick] (0.5,0)--(0.5,3.5)--(6,3.5);
    \draw[thick] (1.5,0)--(1.5,5.5)--(6,5.5);
    \draw[thick] (2.5,0)--(2.5,1.5)--(6,1.5);
    \draw[thick] (3.5,0)--(3.5,0.5)--(6,0.5);
    \draw[thick] (4.5,0)--(4.5,4.5)--(6,4.5);
    \draw[thick] (5.5,0)--(5.5,2.5)--(6,2.5);
    \node[black, scale=1.5] at (2.5,5.5) {$*$};
    \node[black, scale=1.5] at (3.5,5.5) {$*$};
    \node[black, scale=1.5] at (3.5,3.5) {$*$};
    \node[color=black] at (6.5,5.5) {$2$};
    \node[color=black] at (6.5,4.5) {$5$};
    \node[color=black] at (6.5,3.5) {$1$};
    \node[color=black] at (6.5,2.5) {$6$};
    \node[color=black] at (6.5,1.5) {$3$};
    \node[color=black] at (6.5,0.5) {$4$};
    \node[color=black] at (3,-0.8) {$\SRPD(w)$};
\end{tikzpicture}
		$$
	\end{example}
	
	\begin{defn}
		For a diagram $D$, the \defi{row weight} of $D$ is the weak composition $\rw(D)$ where $\rw(D)_i$ is the number of cells in row $i$ of $D$. Similarly, the \defi{column weight} of $D$ is the weak composition $\cw(D)$ where $\cw(D)_i$ is the number of cells in column $i$ of $D$.
	\end{defn}
    
    For each $w\in S_n$, Pechenik, Speyer, and Weigandt originally defined the weak composition $\rajcode(w)$ via increasing subsequences of $w$.  In  this  paper,  we adopt the diagrammatic definition of Pan and Yu \cite{py24}, which uses snow diagrams.
    
	\begin{defn}[\cite{py24}*{Thm.~5.6}]\label{def}
		For $w \in S_n$, $\rajcode(w) := \rw(\Snow(w))$, and $\rajcode(w^{-1}) := \cw(\overleftarrow{\Snow}(w))$.
	\end{defn}
	
	\begin{example}
		For $w = 251634$, we compute $\rajcode(w)$ and $\rajcode(w^{-1})$ using the (left) snow diagrams in Example~\ref{e: snow}. By counting the number of cells in each row (resp. column) of the snow (resp. left snow) diagram, we get that 
		$\rajcode(w) = (3,3,1,2,0,0)$ and $\rajcode(w^{-1}) = (3,2,2,2,0,0)$. Therefore, by Theorem~\ref{T: PSW}, the leading monomial of $\fCM_{251634}(\x;\y)$ under reverse lexicographic order is $x_1^3x_2^3x_3^1x_4^2y_1^3y_2^2y_3^2y_4^2$.
	\end{example}

	\section{Algorithm}
	\label{S: Algorithm}
	In this section, we describe our algorithm that transforms the snow Rothe pipedream $\SRPD(w)$ into the unique maximal marked bumpless pipedream of $w$. Since we are looking for the ``maximal" diagram, we mark all $\jtile$ as $\mtile$. 
	
	The algorithm begins with the snow Rothe pipedream \(\SRPD(w)\). Roughly speaking, the goal of the algorithm is to iteratively ``release" the snow cells by modifying pipes so that these cells are converted into horizontal segments, while preserving the permutation  (see Example~\ref{e: algorithm}). Let $\widehat{\mathrm{D}}(w)$ denote the marked bumpless pipedream we get from applying the algorithm to $\SRPD(w)$. 

    \subsection{Notations}
	We first introduce some terminologies used in the algorithm. For a pipe $p$, its \defi{starting row} is $w^{-1}(p)$ and its \defi{terminating column} is $p$. We say that a $\htile$ of a pipe is a \defi{long line} if it does not lie in the starting row of that pipe. 
    
	\subsection{Algorithm}
    \label{s: algorithm}
    The algorithm consists of two main steps, which we call ``Drooping" and ``Undrooping". 
	\subsubsection{Drooping}	\label{Drooping}	
	Our algorithm iterates on the pipes in reverse order of their starting row, so the algorithm begins with pipe $w(n)$. In each step, if pipe $p$ contains no {\large{$*$}},  the pipe remains unchanged.
	
	If pipe $p$ contains $t$ {\large{$*$}}s, let $(i,j)$ be the $t^{th}$ $\htile$ counting from the $\rtile$ of pipe $p$. We redraw pipe $p$ after replacing $(i,j)$ with a $\rtile$ and continue tracing pipe $p$ by the following local rules:
	\begin{enumerate}
		\item If pipe $p$ enters column $p$, we draw a $\rtile$ followed by $\vtile$s until pipe $p$ exits the entire diagram from the bottom.
		\item If pipe $p$ enters a $\vtile$ from the right, we replace it with a $\ptile$ so pipe $p$ exits from the left. 
		\item If pipe $p$ enters a $\htile$ from the top, we replace it with a $\ptile$ so pipe $p$ exits from the bottom. 
		\item If pipe $p$ enters a $\rtile$ from the top, we replace it with a $\ptile$ so pipe $p$ exits from the left.
		\item If pipe $p$ enters a $\mtile$ from the right, we replace it with a $\ptile$ so pipe $p$ exits from the bottom.
		\item If pipe $p$ enters a $\btile$ from the top, we replace it with a $\mtile$ so pipe $p$ exits from the left.
		\item If pipe $p$ enters a $\btile$ from the right, we replace it with a $\rtile$ so pipe $p$ exits from the bottom.
	\end{enumerate}
   For the processes in (2) and (3), we say that pipe \(p\) ``passes through” the existing pipe in the tile (without changing direction); the resulting \(\ptile\) is termed a \defi{p-cross}. For the processes in (4) and (5), we say that pipe \(p\) ``contacts” the existing pipe in the tile (thereby changing direction); the resulting \(\ptile\) is termed a \defi{c-cross}. 
	
	\subsubsection{Undrooping}	\label{Undrooping}	
	After redrawing each pipe, we check for long lines in all the pipes. If any long line exists, consider the topmost long line of some pipe $q$. Denote by $(a_1, b_0)$ the position where the long line first appears.
	
	\begin{itemize}
		
		\item [{\bf Step 1.}]
		Traverse along pipe $q$ and locate all instances of $\mtile$ (including $\ptile$). If $r$ such instances exist, label them from bottom to top as $(a_1,b_1), (a_2,b_2),\ldots,(a_r,b_r)$. More specifically, denote $a_{r+1}=w^{-1}(q)$ (see Figure \ref{ab}).
		
		\begin{figure}[h]
			\begin{center}
				\begin{tikzpicture}[scale = 1]
					
					\draw [step=4mm,dotted] (-4mm,0mm) grid (32mm,32mm);
					
					\draw[thick](-2mm,0mm)--(-2mm,6mm)--(2mm,6mm)--(2mm,10mm)--(10mm,10mm)--(10mm,14mm)--(14mm,14mm)--(14mm,18mm)--(18mm,18mm)--(18mm,22mm)--(22mm,22mm)--(22mm,26mm)--(32mm,26mm);

					\node[scale=0.7] at (-6mm,26mm) {$a_5$};
					\node[scale=0.7] at (-6mm,22mm) {$a_4$};
					\node[scale=0.7] at (-6mm,18mm) {$a_3$};
					\node[scale=0.7] at (-6mm,14mm) {$a_2$};
					\node[scale=0.7] at (-6mm,10mm) {$a_1$};
					
					\node[scale=0.7] at (6mm,-2mm) {$b_0$};
					\node[scale=0.7] at (10mm,-2mm) {$b_1$};
					\node[scale=0.7] at (14mm,-2mm) {$b_2$};
					\node[scale=0.7] at (18mm,-2mm) {$b_3$};
					\node[scale=0.7] at (22mm,-2mm) {$b_4$};
					
				\end{tikzpicture}
				\caption{Marking method for undrooping operations.}\label{ab}
			\end{center}
			
		\end{figure}
		
		\item [{\bf Step 2.}]
		We construct an operation rectangle whose SE and NW corners are $(a_i, b_i)$ and $(a_{i+1}, b_{i-1})$ for $ i= 1,2,\dots,r$.
		The \defi{mini-undroop} (cf. \cite{HP25}) repositions pipe $q$ from the SE corner of the rectangle to its NW corner, i.e., from $(a_i, b_i)$ to $(a_{i+1}, b_{i-1})$. Figure \ref{rectg} shows this configuration, with all pipes other than $q$ omitted. The resulting diagram after such operations remains a standard marked bumpless pipedream (see Proposition \ref{standard}).

		\begin{figure}[h]
			\begin{center}
				\begin{tabular}{cc}
					\begin{tikzpicture}[scale = 1]
						
						\def\rectanglepath{-- +(16mm,0mm) -- +(16mm,12mm) -- +(0mm,12mm) -- cycle}
						\def\squarepath{[black]-- +(4mm,0mm) -- +(4mm,4mm) -- +(0mm,4mm) -- cycle}
						
						\draw [red](0mm,8mm)\rectanglepath;
						
						%-------
						\draw [step=4mm,dotted] (-4mm,-4mm) grid (24mm,24mm);
						
						\draw[thick](2mm,0mm)--(2mm,10mm)--(14mm,10mm)--(14mm,18mm)--(20mm,18mm);
						
						\draw [->,red](14mm,10mm)--(2mm,18mm);
						
						\draw (0mm,16mm)\squarepath;
						\draw (12mm,8mm)\squarepath;
						
						\node[scale=0.7] at (0mm,22mm) {$(a_{i+1},b_{i-1})$};
						\node[scale=0.7] at (20mm,6mm) {$(a_{i},b_{i})$};

					\end{tikzpicture}&
					
					\begin{tikzpicture}[scale = 1]
						
						\def\rectanglepath{-- +(16mm,0mm) -- +(16mm,12mm) -- +(0mm,12mm) -- cycle}
						\def\squarepath{[black]-- +(4mm,0mm) -- +(4mm,4mm) -- +(0mm,4mm) -- cycle}
						
						\draw [red](0mm,8mm)\rectanglepath;
						
						%-------
						\draw [step=4mm,dotted] (-4mm,-4mm) grid (24mm,24mm);
						
						\draw[thick](2mm,0mm)--(2mm,10mm)--(2mm,18mm)--(14mm,18mm)--(20mm,18mm);
						
						\draw (0mm,16mm)\squarepath;
						\draw (12mm,8mm)\squarepath;
						
						\node[scale=0.7] at (0mm,22mm) {$(a_{i+1},b_{i-1})$};
						\node[scale=0.7] at (20mm,6mm) {$(a_{i},b_{i})$};

					\end{tikzpicture}

				\end{tabular}
				\caption{The mini-undroop in the operation rectangle.}\label{rectg}
				
			\end{center}
			
		\end{figure}
		
		\item[{\bf Step 3.}]
		We apply mini-undroop to the $\mtile$ (including $\ptile$) that lies to the right of this long line. 
        We repeatedly apply mini-undroops until pipe \(q\) has no long lines. Since mini-undroop strictly decreases the row index of $\htile$s, the sequence must terminate after finitely many steps, see Figure \ref{undroop}. We then continue ``Undrooping" process until the entire diagram contains no long lines. Only after all long lines are resolved do we proceed with the ``Drooping" algorithm to redraw the next pipe. 
        
        \begin{figure}[h]
			\begin{center}
				\begin{tabular}{cccc}
					\begin{tikzpicture}[scale = 1]
						\def\rectanglepath{[red]-- +(8mm,0mm) -- +(8mm,8mm) -- +(0mm,8mm) -- cycle}

						%------
						\draw [step=4mm,dotted] (-4mm,0mm) grid (24mm,24mm);
						\draw [ultra thick][black](4mm,6mm)--(8mm,6mm);
						
						\draw[thick](2mm,0mm)--(2mm,6mm)--(10mm,6mm)--(10mm,10mm)--(14mm,10mm)--(14mm,14mm)--(18mm,14mm)--(18mm,18mm)--(24mm,18mm);
						\draw (4mm,4mm)\rectanglepath;
						
					\end{tikzpicture}&

					\quad
					\begin{tikzpicture}[scale = 1]
						\def\rectanglepath{[red]-- +(8mm,0mm) -- +(8mm,8mm) -- +(0mm,8mm) -- cycle}

						%------
						\draw [step=4mm,dotted] (-4mm,0mm) grid (24mm,24mm);
						
						\draw [ultra thick][black](8mm,10mm)--(12mm,10mm);
						
						\draw[thick](2mm,0mm)--(2mm,6mm)--(6mm,6mm)--(6mm,10mm)--(10mm,10mm)--(14mm,10mm)--(14mm,14mm)--(18mm,14mm)--(18mm,18mm)--(24mm,18mm);
						\draw (8mm,8mm)\rectanglepath;
						
					\end{tikzpicture}&
					
					\quad
					\begin{tikzpicture}[scale = 1]
						\def\rectanglepath{[red]-- +(8mm,0mm) -- +(8mm,8mm) -- +(0mm,8mm) -- cycle}

						%------
						\draw [step=4mm,dotted] (-4mm,0mm) grid (24mm,24mm);
						
						\draw [ultra thick][black](12mm,14mm)--(16mm,14mm);
						
						\draw[thick](2mm,0mm)--(2mm,6mm)--(6mm,6mm)--(6mm,10mm)--(10mm,10mm)--(10mm,14mm)--(14mm,14mm)--(18mm,14mm)--(18mm,18mm)--(24mm,18mm);
						\draw (12mm,12mm)\rectanglepath;
						
					\end{tikzpicture}&

					\quad
					\begin{tikzpicture}[scale = 1]
						
						%------
						\draw [step=4mm,dotted] (-4mm,0mm) grid (24mm,24mm);
						
						\draw[thick](2mm,0mm)--(2mm,6mm)--(6mm,6mm)--(6mm,10mm)--(10mm,10mm)--(10mm,14mm)--(14mm,14mm)--(14mm,18mm)--(18mm,18mm)--(24mm,18mm);
						
					\end{tikzpicture}
					
				\end{tabular}
				\caption{An illustration of the undrooping procedure.}\label{undroop}
			\end{center}
		\end{figure}
	\end{itemize}

	\begin{example}
    \label{e: algorithm}
Figure \ref{bumpless} illustrates the construction of the maximal marked bumpless pipedream associated with
$w=5241736.$
Starting from Figure~\ref{bumpless}(a), we apply the algorithm successively through Figures~\ref{bumpless}(b)--(e). Since no long line appears during the process, only the ``Drooping'' algorithm is required. In this figure, the red line marks the pipe that will execute the algorithm. 
			\begin{figure}[h]
				\begin{center}
					\begin{tabular}{ccc}
						%%%1
						
						\begin{tikzpicture}[scale = 1]
                            \def\rectanglepath{[fill=gray!20!white,draw=black!20!black]-- +(4mm,0mm) -- +(4mm,4mm) -- +(0mm,4mm) -- cycle}
							
							\def\squarepath{-- +(28mm,0mm) -- +(28mm,28mm) -- +(0mm,28mm) -- cycle}
							
							%------
							\draw (0mm,0mm)\squarepath;
							\draw [step=4mm,dotted] (0mm,0mm) grid (28mm,28mm);
							\draw [thin](0mm,24mm) \rectanglepath;
							\draw [thin](0mm,20mm)\rectanglepath;
							\draw [thin](0mm,16mm)\rectanglepath;
							\draw [thin](4mm,24mm)\rectanglepath;
							\draw [thin](8mm,16mm)\rectanglepath;
							\draw [thin](8mm,8mm)\rectanglepath;
							\draw [thin](8mm,24mm)\rectanglepath;
							\draw [thin](12mm,24mm)\rectanglepath;
							\draw [thin](20mm,8mm)\rectanglepath;
							
							\draw[red,thick](2mm,0mm)--(2mm,14mm)--(28mm,14mm);
							\draw[thick](6mm,0mm)--(6mm,22mm)--(28mm,22mm);
							\draw[thick](10mm,0mm)--(10mm,6mm)--(28mm,6mm);
							\draw[thick](14mm,0mm)--(14mm,18mm)--(28mm,18mm);
							\draw[thick](18mm,0mm)--(18mm,26mm)--(28mm,26mm);
							\draw[thick](22mm,0mm)--(22mm,2mm)--(28mm,2mm);
							\draw[thick](26mm,0mm)--(26mm,10mm)--(28mm,10mm);
							
							%*
							\node at (22mm,14mm) {{\large $*$} };
							\node at (22mm,18mm) {{\large $*$} };
							\node at (10mm,22mm) {{\large $*$} };
							\node at (22mm,22mm) {{\large $*$} };
							\node at (22mm,26mm) {{\large $*$} };
							
							%column number
							\node at (30mm,26mm) {\small{5}};
							\node at (30mm,22mm) {\small{2}};
							\node at (30mm,18mm) {\small{4}};
							\node at (30mm,14mm) {\small{1}};
							\node at (30mm,10mm) {\small{7}};
							\node at (30mm,6mm) {\small{3}};
							\node at (30mm,2mm) {\small{6}};
							
							%row number
							\node at (2mm,-2mm) {\small{1}};
							\node at (6mm,-2mm) {\small{2}};
							\node at (10mm,-2mm) {\small{3}};
							\node at (14mm,-2mm)  {\small{4}};
							\node at (18mm,-2mm) {\small{5}};
							\node at (22mm,-2mm) {\small{6}};
							\node at (26mm,-2mm) {\small{7}};

							\node at (14mm,-7mm) {(a)};
						\end{tikzpicture}
						&

						%%%2
						\begin{tikzpicture}[scale = 1]
							
							\def\rectanglepath{[fill=gray!20!white,draw=black!20!black]-- +(4mm,0mm) -- +(4mm,4mm) -- +(0mm,4mm) -- cycle}
							
							\def\squarepath{-- +(28mm,0mm) -- +(28mm,28mm) -- +(0mm,28mm) -- cycle}

							\draw (0mm,0mm)\squarepath;
							\draw [step=4mm,dotted] (0mm,0mm) grid (28mm,28mm);
							\draw (0mm,24mm) \rectanglepath;
							\draw (0mm,20mm)\rectanglepath;
							\draw (0mm,16mm)\rectanglepath;
							\draw (0mm,12mm)\rectanglepath;
							\draw (4mm,24mm)\rectanglepath;
							\draw (8mm,16mm)\rectanglepath;
							\draw (8mm,24mm)\rectanglepath;
							\draw (12mm,24mm)\rectanglepath;
							\draw (20mm,8mm)\rectanglepath;
							
							\draw[thick](2mm,0mm)--(2mm,10mm)--(10mm,10mm)--(10mm,14mm)--(28mm,14mm);
							\draw[thick](6mm,0mm)--(6mm,22mm)--(28mm,22mm);
							\draw[thick](10mm,0mm)--(10mm,6mm)--(28mm,6mm);
							\draw[thick,red](14mm,0mm)--(14mm,18mm)--(28mm,18mm);
							\draw[thick](18mm,0mm)--(18mm,26mm)--(28mm,26mm);
							\draw[thick](22mm,0mm)--(22mm,2mm)--(28mm,2mm);
							\draw[thick](26mm,0mm)--(26mm,10mm)--(28mm,10mm);

							\node at (22mm,18mm) {{\large $*$} };
							\node at (22mm,22mm) {{\large $*$} };
							\node at (22mm,26mm) {{\large $*$} };
							\node at (10mm,22mm) {{\large $*$} };
							
							%column number
							\node at (30mm,26mm) {\small{5}};
							\node at (30mm,22mm) {\small{2}};
							\node at (30mm,18mm) {\small{4}};
							\node at (30mm,14mm) {\small{1}};
							\node at (30mm,10mm) {\small{7}};
							\node at (30mm,6mm) {\small{3}};
							\node at (30mm,2mm) {\small{6}};
							
							%row number
							\node at (2mm,-2mm) {\small{1}};
							\node at (6mm,-2mm) {\small{2}};
							\node at (10mm,-2mm) {\small{3}};
							\node at (14mm,-2mm)  {\small{4}};
							\node at (18mm,-2mm) {\small{5}};
							\node at (22mm,-2mm) {\small{6}};
							\node at (26mm,-2mm) {\small{7}};
							
							\filldraw[black] (10mm,10mm) circle (1.5pt);
							
							\node at (14mm,-7mm) {(b)};
						\end{tikzpicture}
						&

						%%%3
						\begin{tikzpicture}[scale = 1]
							
							\def\rectanglepath{[fill=gray!20!white,draw=black!20!black]-- +(4mm,0mm) -- +(4mm,4mm) -- +(0mm,4mm) -- cycle}
							
							\def\squarepath{-- +(28mm,0mm) -- +(28mm,28mm) -- +(0mm,28mm) -- cycle}

							\draw (0mm,0mm)\squarepath;
							\draw [step=4mm,dotted] (0mm,0mm) grid (28mm,28mm);
							\draw (0mm,24mm) \rectanglepath;
							\draw (0mm,20mm)\rectanglepath;
							\draw (0mm,16mm)\rectanglepath;
							\draw (0mm,12mm)\rectanglepath;
							\draw (4mm,24mm)\rectanglepath;
							\draw (8mm,16mm)\rectanglepath;
							\draw (8mm,24mm)\rectanglepath;
							\draw (12mm,24mm)\rectanglepath;
							\draw (12mm,16mm)\rectanglepath;

							\draw[thick](2mm,0mm)--(2mm,10mm)--(10mm,10mm)--(10mm,14mm)--(28mm,14mm);
							\draw[thick,red](6mm,0mm)--(6mm,22mm)--(28mm,22mm);
							\draw[thick](10mm,0mm)--(10mm,6mm)--(28mm,6mm);
							\draw[thick](14mm,0mm)--(14mm,10mm)--(22mm,10mm)--(22mm,18mm)--(28mm,18mm);
							\draw[thick](18mm,0mm)--(18mm,26mm)--(28mm,26mm);
							\draw[thick](22mm,0mm)--(22mm,2mm)--(28mm,2mm);
							\draw[thick](26mm,0mm)--(26mm,10mm)--(28mm,10mm);

							\node at (22mm,22mm) {{\large $*$} };
							\node at (22mm,26mm) {{\large $*$} };
							\node at (10mm,22mm) {{\large $*$} };
							
							%column number
							\node at (30mm,26mm) {\small{5}};
							\node at (30mm,22mm) {\small{2}};
							\node at (30mm,18mm) {\small{4}};
							\node at (30mm,14mm) {\small{1}};
							\node at (30mm,10mm) {\small{7}};
							\node at (30mm,6mm) {\small{3}};
							\node at (30mm,2mm) {\small{6}};
							
							%row number
							\node at (2mm,-2mm) {\small{1}};
							\node at (6mm,-2mm) {\small{2}};
							\node at (10mm,-2mm) {\small{3}};
							\node at (14mm,-2mm)  {\small{4}};
							\node at (18mm,-2mm) {\small{5}};
							\node at (22mm,-2mm) {\small{6}};
							\node at (26mm,-2mm) {\small{7}};
							
							\filldraw[black] (10mm,10mm) circle (1.5pt);
    \filldraw[black] (22mm,10mm) circle (1.5pt);
							
							\node at (14mm,-7mm) {(c)};
						\end{tikzpicture}
					\end{tabular}

					\begin{tabular}{cc}
						%%%4
						\begin{tikzpicture}[scale = 1]
							
							\def\rectanglepath{[fill=gray!20!white,draw=black!20!black]-- +(4mm,0mm) -- +(4mm,4mm) -- +(0mm,4mm) -- cycle}
							
							\def\squarepath{-- +(28mm,0mm) -- +(28mm,28mm) -- +(0mm,28mm) -- cycle}

							\draw (0mm,0mm)\squarepath;
							\draw [step=4mm,dotted] (0mm,0mm) grid (28mm,28mm);
							\draw (0mm,24mm) \rectanglepath;
							\draw (0mm,20mm)\rectanglepath;	
							\draw (0mm,16mm)\rectanglepath;
							\draw (0mm,12mm)\rectanglepath;
							\draw (4mm,24mm)\rectanglepath;
							\draw (4mm,20mm)\rectanglepath;
							\draw (4mm,16mm)\rectanglepath;
							\draw (8mm,24mm)\rectanglepath;
							\draw (8mm,20mm)\rectanglepath;
							\draw (12mm,24mm)\rectanglepath;
							
							\draw[thick](2mm,0mm)--(2mm,10mm)--(10mm,10mm)--(10mm,14mm)--(28mm,14mm);
							\draw[thick](6mm,0mm)--(6mm,14mm)--(10mm,14mm)--(10mm,18mm)--(14mm,18mm)--(14mm,22mm)--(28mm,22mm);
							\draw[thick](10mm,0mm)--(10mm,6mm)--(28mm,6mm);
							\draw[thick](14mm,0mm)--(14mm,10mm)--(22mm,10mm)--(22mm,18mm)--(28mm,18mm);
							\draw[thick,red](18mm,0mm)--(18mm,26mm)--(28mm,26mm);
							\draw[thick](22mm,0mm)--(22mm,2mm)--(28mm,2mm);
							\draw[thick](26mm,0mm)--(26mm,10mm)--(28mm,10mm);

							\node at (22mm,26mm) {{\large $*$} };
							
							%column number
							\node at (30mm,26mm) {\small{5}};
							\node at (30mm,22mm) {\small{2}};
							\node at (30mm,18mm) {\small{4}};
							\node at (30mm,14mm) {\small{1}};
							\node at (30mm,10mm) {\small{7}};
							\node at (30mm,6mm) {\small{3}};
							\node at (30mm,2mm) {\small{6}};
							
							%row number
							\node at (2mm,-2mm) {\small{1}};
							\node at (6mm,-2mm) {\small{2}};
							\node at (10mm,-2mm) {\small{3}};
							\node at (14mm,-2mm)  {\small{4}};
							\node at (18mm,-2mm) {\small{5}};
							\node at (22mm,-2mm) {\small{6}};
							\node at (26mm,-2mm) {\small{7}};
							
							\filldraw[black] (10mm,10mm) circle (1.5pt);
    \filldraw[black] (14mm,18mm) circle (1.5pt);
    \filldraw[black] (22mm,10mm) circle (1.5pt);
							
							\node at (14mm,-7mm) {(d)};
						\end{tikzpicture}
						
						&

						%%%5
						\begin{tikzpicture}[scale = 1]
							
							\def\rectanglepath{[fill=gray!20!white,draw=black!20!black]-- +(4mm,0mm) -- +(4mm,4mm) -- +(0mm,4mm) -- cycle}
							
							\def\squarepath{-- +(28mm,0mm) -- +(28mm,28mm) -- +(0mm,28mm) -- cycle}

							\draw (0mm,0mm)\squarepath;
							\draw [step=4mm,dotted] (0mm,0mm) grid (28mm,28mm);
							\draw (0mm,24mm) \rectanglepath;
							\draw (0mm,20mm)\rectanglepath;	
							\draw (0mm,16mm)\rectanglepath;
							\draw (0mm,12mm)\rectanglepath;
							\draw (4mm,24mm)\rectanglepath;
							\draw (4mm,20mm)\rectanglepath;
							\draw (4mm,16mm)\rectanglepath;
							\draw (8mm,24mm)\rectanglepath;
							\draw (8mm,20mm)\rectanglepath;
							\draw (12mm,24mm)\rectanglepath;
							\draw (16mm,24mm)\rectanglepath;

							\draw[thick](2mm,0mm)--(2mm,10mm)--(10mm,10mm)--(10mm,14mm)--(28mm,14mm);
							\draw[thick](6mm,0mm)--(6mm,14mm)--(10mm,14mm)--(10mm,18mm)--(14mm,18mm)--(14mm,22mm)--(28mm,22mm);
							\draw[thick](10mm,0mm)--(10mm,6mm)--(28mm,6mm);
							\draw[thick](14mm,0mm)--(14mm,10mm)--(22mm,10mm)--(22mm,18mm)--(28mm,18mm);
							\draw[thick](18mm,0mm)--(18mm,18mm)--(22mm,18mm)--(22mm,26mm)--(28mm,26mm);
							\draw[thick](22mm,0mm)--(22mm,2mm)--(28mm,2mm);
							\draw[thick](26mm,0mm)--(26mm,10mm)--(28mm,10mm);
							
							%column number
							\node at (30mm,26mm) {\small{5}};
							\node at (30mm,22mm) {\small{2}};
							\node at (30mm,18mm) {\small{4}};
							\node at (30mm,14mm) {\small{1}};
							\node at (30mm,10mm) {\small{7}};
							\node at (30mm,6mm) {\small{3}};
							\node at (30mm,2mm) {\small{6}};
							
							%row number
							\node at (2mm,-2mm) {\small{1}};
							\node at (6mm,-2mm) {\small{2}};
							\node at (10mm,-2mm) {\small{3}};
							\node at (14mm,-2mm)  {\small{4}};
							\node at (18mm,-2mm) {\small{5}};
							\node at (22mm,-2mm) {\small{6}};
							\node at (26mm,-2mm) {\small{7}};
							
			\filldraw[black] (10mm,10mm) circle (1.5pt);
    \filldraw[black] (14mm,18mm) circle (1.5pt);
    \filldraw[black] (22mm,10mm) circle (1.5pt);

							\node at (14mm,-7mm) {(e)};
						\end{tikzpicture}
					\end{tabular}
					\vspace{-.3cm}
					\caption{The algorithm on the snow Rothe pipedream of $w=5241736$.}
					\label{bumpless}
				\end{center}
			\end{figure}
			
	\end{example}
    
    \vspace*{-10pt}
    
        \begin{figure}[H]
			\begin{center}
				\begin{tabular}{ccc}
					\begin{tikzpicture}[scale = 0.9]
						
						\def\rectanglepath{[fill=gray!20!white,draw=black!20!black]-- +(4mm,0mm) -- +(4mm,4mm) -- +(0mm,4mm) -- cycle}
						
						\def\squarepath{-- +(48mm,0mm) -- +(48mm,48mm) -- +(0mm,48mm) -- cycle}
						
						\node at (26mm,14mm) {{\large $*$} };
						\node at (26mm,34mm) {{\large $*$} };
						\node at (26mm,38mm) {{\large $*$} };
						\node at (26mm,42mm) {{\large $*$} };
						\node at (26mm,46mm) {{\large $*$} };
						\node at (30mm,34mm) {{\large $*$} };
						\node at (30mm,38mm) {{\large $*$} };
						\node at (30mm,42mm) {{\large $*$} };
						\node at (30mm,46mm) {{\large $*$} };
						\node at (34mm,34mm) {{\large $*$} };
						\node at (34mm,38mm) {{\large $*$} };
						\node at (34mm,42mm) {{\large $*$} };
						\node at (34mm,46mm) {{\large $*$} };
						\node at (38mm,34mm) {{\large $*$} };
						\node at (38mm,38mm) {{\large $*$} };
						\node at (38mm,42mm) {{\large $*$} };
						\node at (38mm,46mm) {{\large $*$} };
						\node at (42mm,34mm) {{\large $*$} };
						\node at (42mm,38mm) {{\large $*$} };
						\node at (42mm,42mm) {{\large $*$} };
						\node at (42mm,46mm) {{\large $*$} };
						
						\node at (10mm,42mm) {{\large $*$} };
						\node at (6mm,42mm) {{\large $*$} };

						%%%1
						\draw (0mm,0mm)\squarepath;
						\draw [step=4mm,dotted] (0mm,0mm) grid (48mm,48mm);
						\draw (0mm,44mm)\rectanglepath;
						\draw (4mm,44mm)\rectanglepath;
						\draw (4mm,36mm)\rectanglepath;
						\draw (4mm,32mm)\rectanglepath;
						\draw (4mm,28mm)\rectanglepath;
						\draw (4mm,24mm)\rectanglepath;
						\draw (4mm,20mm)\rectanglepath;
						\draw (4mm,16mm)\rectanglepath;
						\draw (4mm,16mm)\rectanglepath;
						\draw (8mm,44mm)\rectanglepath;
						\draw (8mm,36mm)\rectanglepath;
						\draw (12mm,44mm)\rectanglepath;
						\draw (16mm,44mm)\rectanglepath;
						\draw (16mm,28mm)\rectanglepath;
						\draw (16mm,24mm)\rectanglepath;
						\draw (16mm,20mm)\rectanglepath;
						\draw (16mm,16mm)\rectanglepath;
						\draw (16mm,8mm)\rectanglepath;
						\draw (24mm,28mm)\rectanglepath;
						\draw (24mm,24mm)\rectanglepath;
						\draw (24mm,20mm)\rectanglepath;
						\draw (24mm,16mm)\rectanglepath;
						\draw (24mm,8mm)\rectanglepath;
						\draw (28mm,28mm)\rectanglepath;
						\draw (28mm,24mm)\rectanglepath;
						\draw (28mm,20mm)\rectanglepath;
						\draw (28mm,16mm)\rectanglepath;
						\draw (32mm,28mm)\rectanglepath;
						\draw (32mm,24mm)\rectanglepath;
						\draw (32mm,20mm)\rectanglepath;
						\draw (36mm,28mm)\rectanglepath;
						\draw (36mm,24mm)\rectanglepath;
						\draw (40mm,28mm)\rectanglepath;
						
						\draw[thick](2mm,0mm)--(2mm,42mm)--(48mm,42mm);
						\draw[thick](6mm,0mm)--(6mm,14mm)--(48mm,14mm);
						\draw[thick](10mm,0mm)--(10mm,34mm)--(48mm,34mm);
						\draw[thick](14mm,0mm)--(14mm,38mm)--(48mm,38mm);
						\draw[thick](18mm,0mm)--(18mm,6mm)--(48mm,6mm);
						\draw[thick](22mm,0mm)--(22mm,46mm)--(48mm,46mm);
						\draw[thick](26mm,0mm)--(26mm,2mm)--(48mm,2mm);
						\draw[thick](30mm,0mm)--(30mm,10mm)--(48mm,10mm);
						\draw[thick](34mm,0mm)--(34mm,18mm)--(48mm,18mm);
						\draw[thick](38mm,0mm)--(38mm,22mm)--(48mm,22mm);
						\draw[thick](42mm,0mm)--(42mm,26mm)--(48mm,26mm);
						\draw[thick](46mm,0mm)--(46mm,30mm)--(48mm,30mm);

						%column number
						\node at (50mm,46mm) {\tiny{6}};
						\node at (50mm,42mm) {\tiny{1}};
						\node at (50mm,38mm) {\tiny{4}};
						\node at (50mm,34mm) {\tiny{3}};
						\node at (50mm,30mm) {\tiny{12}};
						\node at (50mm,26mm) {\tiny{11}};
						\node at (50mm,22mm) {\tiny{10}};
						\node at (50mm,18mm) {\tiny{9}};
						\node at (50mm,14mm) {\tiny{2}};
						\node at (50mm,10mm) {\tiny{8}};
						\node at (50mm,6mm) {\tiny{5}};
						\node at (50mm,2mm) {\tiny{7}};
						%
						%row number
						\node at (2mm,-2mm) {\tiny{1}};
						\node at (6mm,-2mm) {\tiny{2}};
						\node at (10mm,-2mm) {\tiny{3}};
						\node at (14mm,-2mm)  {\tiny{4}};
						\node at (18mm,-2mm) {\tiny{5}};
						\node at (22mm,-2mm) {\tiny{6}};
						\node at (26mm,-2mm) {\tiny{7}};
						\node at (30mm,-2mm) {\tiny{8}};
						\node at (34mm,-2mm) {\tiny{9}};
						\node at (38mm,-2mm) {\tiny{10}};
						\node at (42mm,-2mm) {\tiny{11}};
						\node at (46mm,-2mm) {\tiny{12}};
						
						\node at (24mm,-7mm) {(a)};
						
					\end{tikzpicture}&

					\quad
					\begin{tikzpicture}[scale = 0.9]
						
						\def\rectanglepath{[fill=gray!20!white,draw=black!20!black]-- +(4mm,0mm) -- +(4mm,4mm) -- +(0mm,4mm) -- cycle}
						
						\def\squarepath{-- +(48mm,0mm) -- +(48mm,48mm) -- +(0mm,48mm) -- cycle}
						
						\node at (26mm,46mm) {{\large $*$} };
						\node at (30mm,46mm) {{\large $*$} };
						\node at (34mm,46mm) {{\large $*$} };
						\node at (38mm,46mm) {{\large $*$} };
						\node at (42mm,46mm) {{\large $*$} };
						
						%%%2
						\draw (0mm,0mm)\squarepath;
						\draw [step=4mm,dotted] (0mm,0mm) grid (48mm,48mm);
						\draw (0mm,44mm)\rectanglepath;
						\draw (0mm,40mm)\rectanglepath;
						\draw (0mm,36mm)\rectanglepath;
						\draw (0mm,32mm)\rectanglepath;
						\draw (0mm,28mm)\rectanglepath;
						\draw (0mm,24mm)\rectanglepath;
						\draw (0mm,20mm)\rectanglepath;
						\draw (0mm,16mm)\rectanglepath;
						
						\draw (4mm,44mm)\rectanglepath;
						\draw (4mm,40mm)\rectanglepath;
						\draw (4mm,36mm)\rectanglepath;
						\draw (4mm,32mm)\rectanglepath;
						\draw (4mm,28mm)\rectanglepath;
						\draw (4mm,20mm)\rectanglepath;
						\draw (4mm,24mm)\rectanglepath;
						%\draw (4mm,12mm)\rectanglepath;
						
						\draw (8mm,44mm)\rectanglepath;
						\draw (8mm,40mm)\rectanglepath;
						\draw (8mm,36mm)\rectanglepath;
						\draw (8mm,32mm)\rectanglepath;
						\draw (8mm,24mm)\rectanglepath;
						\draw (8mm,28mm)\rectanglepath;
						%\draw (8mm,16mm)\rectanglepath;
						
						\draw (12mm,44mm)\rectanglepath;
						\draw (12mm,40mm)\rectanglepath;
						\draw (12mm,36mm)\rectanglepath;
						\draw (12mm,28mm)\rectanglepath;
						\draw (12mm,32mm)\rectanglepath;
						%\draw (12mm,20mm)\rectanglepath;
						
						\draw (16mm,44mm)\rectanglepath;
						\draw (16mm,40mm)\rectanglepath;
						\draw (16mm,32mm)\rectanglepath;
						\draw (16mm,36mm)\rectanglepath;
						%\draw (16mm,24mm)\rectanglepath;
						%\draw (16mm,8mm)\rectanglepath;

						\draw (24mm,36mm)\rectanglepath;
						\draw (24mm,40mm)\rectanglepath;
						%\draw (24mm,28mm)\rectanglepath;
						\draw (24mm,8mm)\rectanglepath;
						%\draw (24mm,16mm)\rectanglepath;
						
						\draw (28mm,40mm)\rectanglepath;
						%\draw (28mm,32mm)\rectanglepath;
						\draw (28mm,16mm)\rectanglepath;
						%\draw (28mm,20mm)\rectanglepath;
						
						%\draw (32mm,36mm)\rectanglepath;
						\draw (32mm,20mm)\rectanglepath;
						%\draw (32mm,24mm)\rectanglepath;
						
						\draw (36mm,24mm)\rectanglepath;
						%\draw (36mm,28mm)\rectanglepath;
						
						\draw (40mm,28mm)\rectanglepath;
						
						\draw[thick](2mm,0mm)--(2mm,14mm)--(6mm,14mm)--(6mm,18mm)--(10mm,18mm)--(10mm,22mm)--(14mm,22mm)--(14mm,26mm)--(18mm,26mm)--(18mm,30mm)--(26mm,30mm)--(26mm,34mm)--(30mm,34mm)--(30mm,38mm)--(34mm,38mm)--(34mm,42mm)--(48mm,42mm);
						\draw[thick](6mm,0mm)--(6mm,10mm)--(18mm,10mm)--(18mm,14mm)--(48mm,14mm);
						\draw[thick](10mm,0mm)--(10mm,14mm)--(18mm,14mm)--(18mm,18mm)--(26mm,18mm)--(26mm,22mm)--(30mm,22mm)--(30mm,26mm)--(34mm,26mm)--(34mm,30mm)--(38mm,30mm)--(38mm,34mm)--(48mm,34mm);
						\draw[thick](14mm,0mm)--(14mm,18mm)--(18mm,18mm)--(18mm,22mm)--(26mm,22mm)--(26mm,26mm)--(30mm,26mm)--(30mm,30mm)--(34mm,30mm)--(34mm,34mm)--(38mm,34mm)--(38mm,38mm)--(48mm,38mm);
						\draw[thick](18mm,0mm)--(18mm,6mm)--(48mm,6mm);
						\draw[thick](22mm,0mm)--(22mm,46mm)--(48mm,46mm);
						\draw[thick](26mm,0mm)--(26mm,2mm)--(48mm,2mm);
						\draw[thick](30mm,0mm)--(30mm,10mm)--(48mm,10mm);
						\draw[thick](34mm,0mm)--(34mm,18mm)--(48mm,18mm);
						\draw[thick](38mm,0mm)--(38mm,22mm)--(48mm,22mm);
						\draw[thick](42mm,0mm)--(42mm,26mm)--(48mm,26mm);
						\draw[thick](46mm,0mm)--(46mm,30mm)--(48mm,30mm);

						%column number
						\node at (50mm,46mm) {\tiny{6}};
						\node at (50mm,42mm) {\tiny{1}};
						\node at (50mm,38mm) {\tiny{4}};
						\node at (50mm,34mm) {\tiny{3}};
						\node at (50mm,30mm) {\tiny{12}};
						\node at (50mm,26mm) {\tiny{11}};
						\node at (50mm,22mm) {\tiny{10}};
						\node at (50mm,18mm) {\tiny{9}};
						\node at (50mm,14mm) {\tiny{2}};
						\node at (50mm,10mm) {\tiny{8}};
						\node at (50mm,6mm) {\tiny{5}};
						\node at (50mm,2mm) {\tiny{7}};
						%
						%row number
						\node at (2mm,-2mm) {\tiny{1}};
						\node at (6mm,-2mm) {\tiny{2}};
						\node at (10mm,-2mm) {\tiny{3}};
						\node at (14mm,-2mm)  {\tiny{4}};
						\node at (18mm,-2mm) {\tiny{5}};
						\node at (22mm,-2mm) {\tiny{6}};
						\node at (26mm,-2mm) {\tiny{7}};
						\node at (30mm,-2mm) {\tiny{8}};
						\node at (34mm,-2mm) {\tiny{9}};
						\node at (38mm,-2mm) {\tiny{10}};
						\node at (42mm,-2mm) {\tiny{11}};
						\node at (46mm,-2mm) {\tiny{12}};
						
						\node at (24mm,-7mm) {(b)};
						\filldraw[black] (6mm,14mm) circle (1.5pt);
						\filldraw[black] (10mm,18mm) circle (1.5pt);
						\filldraw[black] (14mm,22mm) circle (1.5pt);
						\filldraw[black] (18mm,26mm) circle (1.5pt);
						\filldraw[black] (26mm,30mm) circle (1.5pt);
						\filldraw[black] (30mm,34mm) circle (1.5pt);
						\filldraw[black] (34mm,38mm) circle (1.5pt);
						\filldraw[black] (18mm,10mm) circle (1.5pt);
						\filldraw[black] (26mm,18mm) circle (1.5pt);
						\filldraw[black] (30mm,22mm) circle (1.5pt);
						\filldraw[black] (34mm,26mm) circle (1.5pt);
						\filldraw[black] (38mm,30mm) circle (1.5pt);
					\end{tikzpicture}
                    \quad
					%%%%%%%%%%%%%%%%%%%%%%3
					\begin{tikzpicture}[scale = 0.9]
						
						\def\rectanglepath{[fill=gray!20!white,draw=black!20!black]-- +(4mm,0mm) -- +(4mm,4mm) -- +(0mm,4mm) -- cycle}
						
						\def\squarepath{-- +(48mm,0mm) -- +(48mm,48mm) -- +(0mm,48mm) -- cycle}

						%%%2
						\draw (0mm,0mm)\squarepath;
						\draw [step=4mm,dotted] (0mm,0mm) grid (48mm,48mm);
						\draw (0mm,44mm)\rectanglepath;
						\draw (0mm,40mm)\rectanglepath;
						\draw (0mm,36mm)\rectanglepath;
						\draw (0mm,32mm)\rectanglepath;
						\draw (0mm,28mm)\rectanglepath;
						\draw (0mm,24mm)\rectanglepath;
						\draw (0mm,20mm)\rectanglepath;
						\draw (0mm,16mm)\rectanglepath;
						
						\draw (4mm,44mm)\rectanglepath;
						\draw (4mm,40mm)\rectanglepath;
						\draw (4mm,36mm)\rectanglepath;
						\draw (4mm,32mm)\rectanglepath;
						\draw (4mm,28mm)\rectanglepath;
						\draw (4mm,20mm)\rectanglepath;
						\draw (4mm,24mm)\rectanglepath;
						%\draw (4mm,12mm)\rectanglepath;
						
						\draw (8mm,44mm)\rectanglepath;
						\draw (8mm,40mm)\rectanglepath;
						\draw (8mm,36mm)\rectanglepath;
						\draw (8mm,32mm)\rectanglepath;
						\draw (8mm,24mm)\rectanglepath;
						\draw (8mm,28mm)\rectanglepath;
						%\draw (8mm,16mm)\rectanglepath;
						
						\draw (12mm,44mm)\rectanglepath;
						\draw (12mm,40mm)\rectanglepath;
						\draw (12mm,36mm)\rectanglepath;
						\draw (12mm,28mm)\rectanglepath;
						\draw (12mm,32mm)\rectanglepath;
						%\draw (12mm,20mm)\rectanglepath;
						
						\draw (16mm,44mm)\rectanglepath;
						\draw (16mm,40mm)\rectanglepath;
						\draw (16mm,32mm)\rectanglepath;
						\draw (16mm,36mm)\rectanglepath;
						%\draw (16mm,24mm)\rectanglepath;
						%\draw (16mm,8mm)\rectanglepath;
						
						\draw (20mm,44mm)\rectanglepath;
						\draw (20mm,40mm)\rectanglepath;
						\draw (20mm,32mm)\rectanglepath;
						\draw (20mm,36mm)\rectanglepath;
						\draw (20mm,24mm)\rectanglepath;
						
						\draw (24mm,44mm)\rectanglepath;
						\draw (24mm,36mm)\rectanglepath;
						\draw (24mm,40mm)\rectanglepath;
						%\draw (24mm,28mm)\rectanglepath;
						%\draw (24mm,8mm)\rectanglepath;
						
						\draw (28mm,44mm)\rectanglepath;
						\draw (28mm,40mm)\rectanglepath;
						%\draw (28mm,32mm)\rectanglepath;
						%\draw (28mm,16mm)\rectanglepath;
						
						\draw (32mm,44mm)\rectanglepath;
						%\draw (32mm,36mm)\rectanglepath;
						%\draw (32mm,20mm)\rectanglepath;
						
						\draw (36mm,44mm)\rectanglepath;
						%\draw (36mm,24mm)\rectanglepath;
						
						%\draw (40mm,28mm)\rectanglepath;
						
						\draw[red,thick](2mm,0mm)--(2mm,14mm)--(6mm,14mm)--(6mm,18mm)--(10mm,18mm)--(10mm,22mm)--(14mm,22mm)--(14mm,26mm)--(18mm,26mm)--(18mm,30mm)--(26mm,30mm)--(26mm,34mm)--(30mm,34mm)--(30mm,38mm)--(34mm,38mm)--(34mm,42mm)--(48mm,42mm);
						\draw[thick](6mm,0mm)--(6mm,10mm)--(18mm,10mm)--(18mm,14mm)--(48mm,14mm);
						\draw[thick](10mm,0mm)--(10mm,14mm)--(18mm,14mm)--(18mm,18mm)--(26mm,18mm)--(26mm,22mm)--(30mm,22mm)--(30mm,26mm)--(34mm,26mm)--(34mm,30mm)--(38mm,30mm)--(38mm,34mm)--(48mm,34mm);
						\draw[thick](14mm,0mm)--(14mm,18mm)--(18mm,18mm)--(18mm,22mm)--(26mm,22mm)--(26mm,26mm)--(30mm,26mm)--(30mm,30mm)--(34mm,30mm)--(34mm,34mm)--(38mm,34mm)--(38mm,38mm)--(48mm,38mm);
						\draw[thick](18mm,0mm)--(18mm,6mm)--(48mm,6mm);
						\draw[thick](22mm,0mm)--(22mm,10mm)--(26mm,10mm)--(26mm,18mm)--(30mm,18mm)--(30mm,22mm)--(34mm,22mm)--(34mm,26mm)--(38mm,26mm)--(38mm,30mm)--(42mm,30mm)--(42mm,46mm)--(48mm,46mm);
						\draw[thick](26mm,0mm)--(26mm,2mm)--(48mm,2mm);
						\draw[thick](30mm,0mm)--(30mm,10mm)--(48mm,10mm);
						\draw[thick](34mm,0mm)--(34mm,18mm)--(48mm,18mm);
						\draw[thick](38mm,0mm)--(38mm,22mm)--(48mm,22mm);
						\draw[thick](42mm,0mm)--(42mm,26mm)--(48mm,26mm);
						\draw[thick](46mm,0mm)--(46mm,30mm)--(48mm,30mm);
						
						\draw [ultra thick][red](20mm,30mm)--(24mm,30mm);
						\draw [ultra thick][black](20mm,22mm)--(24mm,22mm);
						\draw [ultra thick][black](20mm,18mm)--(24mm,18mm);

						%column number
						\node at (50mm,46mm) {\tiny{6}};
						\node at (50mm,42mm) {\tiny{1}};
						\node at (50mm,38mm) {\tiny{4}};
						\node at (50mm,34mm) {\tiny{3}};
						\node at (50mm,30mm) {\tiny{12}};
						\node at (50mm,26mm) {\tiny{11}};
						\node at (50mm,22mm) {\tiny{10}};
						\node at (50mm,18mm) {\tiny{9}};
						\node at (50mm,14mm) {\tiny{2}};
						\node at (50mm,10mm) {\tiny{8}};
						\node at (50mm,6mm) {\tiny{5}};
						\node at (50mm,2mm) {\tiny{7}};
						%
						%row number
						\node at (2mm,-2mm) {\tiny{1}};
						\node at (6mm,-2mm) {\tiny{2}};
						\node at (10mm,-2mm) {\tiny{3}};
						\node at (14mm,-2mm)  {\tiny{4}};
						\node at (18mm,-2mm) {\tiny{5}};
						\node at (22mm,-2mm) {\tiny{6}};
						\node at (26mm,-2mm) {\tiny{7}};
						\node at (30mm,-2mm) {\tiny{8}};
						\node at (34mm,-2mm) {\tiny{9}};
						\node at (38mm,-2mm) {\tiny{10}};
						\node at (42mm,-2mm) {\tiny{11}};
						\node at (46mm,-2mm) {\tiny{12}};
						\filldraw[red] (6mm,14mm) circle (1.5pt);
						\filldraw[red] (10mm,18mm) circle (1.5pt);
						\filldraw[red] (14mm,22mm) circle (1.5pt);
						\filldraw[red] (18mm,26mm) circle (1.5pt);
						\filldraw[red] (26mm,30mm) circle (1.5pt);
						\filldraw[red] (30mm,34mm) circle (1.5pt);
						\filldraw[red] (34mm,38mm) circle (1.5pt);
						\filldraw[black] (18mm,10mm) circle (1.5pt);
						
						\filldraw[black] (26mm,10mm) circle (1.5pt);
						\filldraw[black] (30mm,18mm) circle (1.5pt);
						\filldraw[black] (34mm,22mm) circle (1.5pt);
						\filldraw[black] (38mm,26mm) circle (1.5pt);
						\filldraw[black] (42mm,30mm) circle (1.5pt);
						\node at (24mm,-7mm) {(c)};
						
					\end{tikzpicture} \\
					
					%%%%%%%%%%%%%%%%%%4			
					
					\begin{tikzpicture}[scale = 0.9]
						
						\def\rectanglepath{[fill=gray!20!white,draw=black!20!black]-- +(4mm,0mm) -- +(4mm,4mm) -- +(0mm,4mm) -- cycle}
						
						\def\squarepath{-- +(48mm,0mm) -- +(48mm,48mm) -- +(0mm,48mm) -- cycle}

						%%%3
						\draw (0mm,0mm)\squarepath;
						\draw [step=4mm,dotted] (0mm,0mm) grid (48mm,48mm);
						\draw (0mm,44mm)\rectanglepath;
						\draw (0mm,40mm)\rectanglepath;
						\draw (0mm,36mm)\rectanglepath;
						\draw (0mm,32mm)\rectanglepath;
						\draw (0mm,28mm)\rectanglepath;
						\draw (0mm,24mm)\rectanglepath;
						\draw (0mm,20mm)\rectanglepath;
						\draw (0mm,16mm)\rectanglepath;
						
						\draw (4mm,44mm)\rectanglepath;
						\draw (4mm,40mm)\rectanglepath;
						\draw (4mm,36mm)\rectanglepath;
						\draw (4mm,32mm)\rectanglepath;
						\draw (4mm,28mm)\rectanglepath;
						\draw (4mm,20mm)\rectanglepath;
						\draw (4mm,24mm)\rectanglepath;
						%\draw (4mm,12mm)\rectanglepath;   % 黑点 (6,14)
						
						\draw (8mm,44mm)\rectanglepath;
						\draw (8mm,40mm)\rectanglepath;
						\draw (8mm,36mm)\rectanglepath;
						\draw (8mm,32mm)\rectanglepath;
						\draw (8mm,24mm)\rectanglepath;
						\draw (8mm,28mm)\rectanglepath;
						%\draw (8mm,16mm)\rectanglepath;   % 黑点 (10,18)
						
						\draw (12mm,44mm)\rectanglepath;
						\draw (12mm,40mm)\rectanglepath;
						\draw (12mm,36mm)\rectanglepath;
						\draw (12mm,28mm)\rectanglepath;
						\draw (12mm,32mm)\rectanglepath;
						%\draw (12mm,20mm)\rectanglepath;   % 黑点 (14,22)
						
						\draw (16mm,44mm)\rectanglepath;
						\draw (16mm,40mm)\rectanglepath;
						\draw (16mm,32mm)\rectanglepath;
						\draw (16mm,36mm)\rectanglepath;
						%\draw (16mm,24mm)\rectanglepath;   % 黑点 (18,26)
						%\draw (16mm,8mm)\rectanglepath;    % 黑点 (18,10)
						
						\draw (20mm,44mm)\rectanglepath;
						\draw (20mm,40mm)\rectanglepath;
						\draw (20mm,36mm)\rectanglepath;
						%\draw (20mm,28mm)\rectanglepath;
						\draw (20mm,24mm)\rectanglepath;
						
						\draw (24mm,44mm)\rectanglepath;
						\draw (24mm,40mm)\rectanglepath;
						%\draw (24mm,32mm)\rectanglepath;
						\draw (24mm,28mm)\rectanglepath;
						%\draw (24mm,8mm)\rectanglepath;    % 黑点 (26,10)
						
						\draw (28mm,44mm)\rectanglepath;
						%\draw (28mm,36mm)\rectanglepath;
						\draw (28mm,32mm)\rectanglepath;
						%\draw (28mm,16mm)\rectanglepath;   % 黑点 (30,18)
						
						\draw (32mm,44mm)\rectanglepath;
						\draw (32mm,36mm)\rectanglepath;
						%\draw (32mm,20mm)\rectanglepath;   % 黑点 (34,22)
						
						\draw (36mm,44mm)\rectanglepath;
						%\draw (36mm,24mm)\rectanglepath;   % 黑点 (38,26)
						
						%\draw (40mm,28mm)\rectanglepath;   % 黑点 (42,30)
						
						\draw[thick](2mm,0mm)--(2mm,14mm)--(6mm,14mm)--(6mm,18mm)--(10mm,18mm)--(10mm,22mm)--(14mm,22mm)--(14mm,26mm)--(18mm,26mm)--(18mm,30mm)--(22mm,30mm)--(22mm,34mm)--(26mm,34mm)--(26mm,38mm)--(30mm,38mm)--(30mm,42mm)--(48mm,42mm);
						\draw[thick](6mm,0mm)--(6mm,10mm)--(18mm,10mm)--(18mm,14mm)--(48mm,14mm);
						\draw[thick](10mm,0mm)--(10mm,14mm)--(18mm,14mm)--(18mm,18mm)--(26mm,18mm)--(26mm,22mm)--(30mm,22mm)--(30mm,26mm)--(34mm,26mm)--(34mm,30mm)--(38mm,30mm)--(38mm,34mm)--(48mm,34mm);
						\draw[red,thick](14mm,0mm)--(14mm,18mm)--(18mm,18mm)--(18mm,22mm)--(26mm,22mm)--(26mm,26mm)--(30mm,26mm)--(30mm,30mm)--(34mm,30mm)--(34mm,34mm)--(38mm,34mm)--(38mm,38mm)--(48mm,38mm);
						\draw[thick](18mm,0mm)--(18mm,6mm)--(48mm,6mm);
						\draw[thick](22mm,0mm)--(22mm,10mm)--(26mm,10mm)--(26mm,18mm)--(30mm,18mm)--(30mm,22mm)--(34mm,22mm)--(34mm,26mm)--(38mm,26mm)--(38mm,30mm)--(42mm,30mm)--(42mm,46mm)--(48mm,46mm);
						\draw[thick](26mm,0mm)--(26mm,2mm)--(48mm,2mm);
						\draw[thick](30mm,0mm)--(30mm,10mm)--(48mm,10mm);
						\draw[thick](34mm,0mm)--(34mm,18mm)--(48mm,18mm);
						\draw[thick](38mm,0mm)--(38mm,22mm)--(48mm,22mm);
						\draw[thick](42mm,0mm)--(42mm,26mm)--(48mm,26mm);
						\draw[thick](46mm,0mm)--(46mm,30mm)--(48mm,30mm);

						\draw [ultra thick][red](20mm,22mm)--(24mm,22mm);
						\draw [ultra thick][black](20mm,18mm)--(24mm,18mm);
						
						%column number
						\node at (50mm,46mm) {\tiny{6}};
						\node at (50mm,42mm) {\tiny{1}};
						\node at (50mm,38mm) {\tiny{4}};
						\node at (50mm,34mm) {\tiny{3}};
						\node at (50mm,30mm) {\tiny{12}};
						\node at (50mm,26mm) {\tiny{11}};
						\node at (50mm,22mm) {\tiny{10}};
						\node at (50mm,18mm) {\tiny{9}};
						\node at (50mm,14mm) {\tiny{2}};
						\node at (50mm,10mm) {\tiny{8}};
						\node at (50mm,6mm) {\tiny{5}};
						\node at (50mm,2mm) {\tiny{7}};
						%
						%row number
						\node at (2mm,-2mm) {\tiny{1}};
						\node at (6mm,-2mm) {\tiny{2}};
						\node at (10mm,-2mm) {\tiny{3}};
						\node at (14mm,-2mm)  {\tiny{4}};
						\node at (18mm,-2mm) {\tiny{5}};
						\node at (22mm,-2mm) {\tiny{6}};
						\node at (26mm,-2mm) {\tiny{7}};
						\node at (30mm,-2mm) {\tiny{8}};
						\node at (34mm,-2mm) {\tiny{9}};
						\node at (38mm,-2mm) {\tiny{10}};
						\node at (42mm,-2mm) {\tiny{11}};
						\node at (46mm,-2mm) {\tiny{12}};
						
						\node at (24mm,-7mm) {(d)};
						\filldraw[black] (6mm,14mm) circle (1.5pt);
						\filldraw[black] (10mm,18mm) circle (1.5pt);
						\filldraw[black] (14mm,22mm) circle (1.5pt);
						\filldraw[black] (18mm,26mm) circle (1.5pt);
						\filldraw[black] (22mm,30mm) circle (1.5pt);
						\filldraw[black] (26mm,34mm) circle (1.5pt);
						\filldraw[black] (30mm,38mm) circle (1.5pt);
						\filldraw[black] (18mm,10mm) circle (1.5pt);
						
						\filldraw[black] (26mm,10mm) circle (1.5pt);
                        \filldraw[black] (30mm,18mm) circle (1.5pt);
						\filldraw[black] (34mm,22mm) circle (1.5pt);
						\filldraw[black] (38mm,26mm) circle (1.5pt);
						\filldraw[black] (42mm,30mm) circle (1.5pt);
						
					\end{tikzpicture}&

					\quad
					%%%5
					\begin{tikzpicture}[scale = 0.9]
						
						\def\rectanglepath{[fill=gray!20!white,draw=black!20!black]-- +(4mm,0mm) -- +(4mm,4mm) -- +(0mm,4mm) -- cycle}
						
						\def\squarepath{-- +(48mm,0mm) -- +(48mm,48mm) -- +(0mm,48mm) -- cycle}
						
						\draw (0mm,0mm)\squarepath;
						\draw [step=4mm,dotted] (0mm,0mm) grid (48mm,48mm);
						\draw (0mm,44mm)\rectanglepath;
						\draw (0mm,40mm)\rectanglepath;
						\draw (0mm,36mm)\rectanglepath;
						\draw (0mm,32mm)\rectanglepath;
						\draw (0mm,28mm)\rectanglepath;
						\draw (0mm,24mm)\rectanglepath;
						\draw (0mm,20mm)\rectanglepath;
						\draw (0mm,16mm)\rectanglepath;
						
						\draw (4mm,44mm)\rectanglepath;
						\draw (4mm,40mm)\rectanglepath;
						\draw (4mm,36mm)\rectanglepath;
						\draw (4mm,32mm)\rectanglepath;
						\draw (4mm,28mm)\rectanglepath;
						\draw (4mm,20mm)\rectanglepath;
						\draw (4mm,24mm)\rectanglepath;
						%\draw (4mm,12mm)\rectanglepath;
						
						\draw (8mm,44mm)\rectanglepath;
						\draw (8mm,40mm)\rectanglepath;
						\draw (8mm,36mm)\rectanglepath;
						\draw (8mm,32mm)\rectanglepath;
						\draw (8mm,24mm)\rectanglepath;
						\draw (8mm,28mm)\rectanglepath;
						%\draw (8mm,16mm)\rectanglepath;
						
						\draw (12mm,44mm)\rectanglepath;
						\draw (12mm,40mm)\rectanglepath;
						\draw (12mm,36mm)\rectanglepath;
						\draw (12mm,28mm)\rectanglepath;
						\draw (12mm,32mm)\rectanglepath;
						%\draw (12mm,20mm)\rectanglepath;
						
						\draw (16mm,44mm)\rectanglepath;
						\draw (16mm,40mm)\rectanglepath;
						\draw (16mm,32mm)\rectanglepath;
						\draw (16mm,36mm)\rectanglepath;
						%\draw (16mm,24mm)\rectanglepath;
						%\draw (16mm,8mm)\rectanglepath;
						
						\draw (20mm,44mm)\rectanglepath;
						\draw (20mm,40mm)\rectanglepath;
						\draw (20mm,36mm)\rectanglepath;
						%\draw (20mm,28mm)\rectanglepath;
						%\draw (20mm,20mm)\rectanglepath;
						
						\draw (24mm,44mm)\rectanglepath;
						\draw (24mm,40mm)\rectanglepath;
						%\draw (24mm,32mm)\rectanglepath;
						%\draw (24mm,24mm)\rectanglepath;
						%\draw (24mm,8mm)\rectanglepath;
						
						\draw (28mm,44mm)\rectanglepath;
						%\draw (28mm,36mm)\rectanglepath;
						%\draw (28mm,28mm)\rectanglepath;
						%\draw (28mm,16mm)\rectanglepath;
						
						\draw (32mm,44mm)\rectanglepath;
						%\draw (32mm,32mm)\rectanglepath;
						%\draw (32mm,20mm)\rectanglepath;
						
						\draw (36mm,44mm)\rectanglepath;
						%\draw (36mm,24mm)\rectanglepath;
						
						%\draw (40mm,28mm)\rectanglepath;
						
						\draw[thick](2mm,0mm)--(2mm,14mm)--(6mm,14mm)--(6mm,18mm)--(10mm,18mm)--(10mm,22mm)--(14mm,22mm)--(14mm,26mm)--(18mm,26mm)--(18mm,30mm)--(22mm,30mm)--(22mm,34mm)--(26mm,34mm)--(26mm,38mm)--(30mm,38mm)--(30mm,42mm)--(48mm,42mm);
						\draw[thick](6mm,0mm)--(6mm,10mm)--(18mm,10mm)--(18mm,14mm)--(48mm,14mm);
						\draw[red,thick](10mm,0mm)--(10mm,14mm)--(18mm,14mm)--(18mm,18mm)--(26mm,18mm)--(26mm,22mm)--(30mm,22mm)--(30mm,26mm)--(34mm,26mm)--(34mm,30mm)--(38mm,30mm)--(38mm,34mm)--(48mm,34mm);
						\draw[thick](14mm,0mm)--(14mm,18mm)--(18mm,18mm)--(18mm,22mm)--(22mm,22mm)--(22mm,26mm)--(26mm,26mm)--(26mm,30mm)--(30mm,30mm)--(30mm,34mm)--(34mm,34mm)--(34mm,38mm)--(48mm,38mm);
						\draw[thick](18mm,0mm)--(18mm,6mm)--(48mm,6mm);
						\draw[thick](22mm,0mm)--(22mm,10mm)--(26mm,10mm)--(26mm,18mm)--(30mm,18mm)--(30mm,22mm)--(34mm,22mm)--(34mm,26mm)--(38mm,26mm)--(38mm,30mm)--(42mm,30mm)--(42mm,46mm)--(48mm,46mm);
						\draw[thick](26mm,0mm)--(26mm,2mm)--(48mm,2mm);
						\draw[thick](30mm,0mm)--(30mm,10mm)--(48mm,10mm);
						\draw[thick](34mm,0mm)--(34mm,18mm)--(48mm,18mm);
						\draw[thick](38mm,0mm)--(38mm,22mm)--(48mm,22mm);
						\draw[thick](42mm,0mm)--(42mm,26mm)--(48mm,26mm);
						\draw[thick](46mm,0mm)--(46mm,30mm)--(48mm,30mm);

						\draw [ultra thick][red](20mm,18mm)--(24mm,18mm);

						%column number
						\node at (50mm,46mm) {\tiny{6}};
						\node at (50mm,42mm) {\tiny{1}};
						\node at (50mm,38mm) {\tiny{4}};
						\node at (50mm,34mm) {\tiny{3}};
						\node at (50mm,30mm) {\tiny{12}};
						\node at (50mm,26mm) {\tiny{11}};
						\node at (50mm,22mm) {\tiny{10}};
						\node at (50mm,18mm) {\tiny{9}};
						\node at (50mm,14mm) {\tiny{2}};
						\node at (50mm,10mm) {\tiny{8}};
						\node at (50mm,6mm) {\tiny{5}};
						\node at (50mm,2mm) {\tiny{7}};
						%
						%row number
						\node at (2mm,-2mm) {\tiny{1}};
						\node at (6mm,-2mm) {\tiny{2}};
						\node at (10mm,-2mm) {\tiny{3}};
						\node at (14mm,-2mm)  {\tiny{4}};
						\node at (18mm,-2mm) {\tiny{5}};
						\node at (22mm,-2mm) {\tiny{6}};
						\node at (26mm,-2mm) {\tiny{7}};
						\node at (30mm,-2mm) {\tiny{8}};
						\node at (34mm,-2mm) {\tiny{9}};
						\node at (38mm,-2mm) {\tiny{10}};
						\node at (42mm,-2mm) {\tiny{11}};
						\node at (46mm,-2mm) {\tiny{12}};
						\filldraw[black] (6mm,14mm) circle (1.5pt);
						\filldraw[black] (10mm,18mm) circle (1.5pt);
						\filldraw[black] (14mm,22mm) circle (1.5pt);
						\filldraw[black] (18mm,26mm) circle (1.5pt);
						\filldraw[black] (22mm,30mm) circle (1.5pt);
						\filldraw[black] (26mm,34mm) circle (1.5pt);
						\filldraw[black] (30mm,38mm) circle (1.5pt);
						\filldraw[black] (18mm,10mm) circle (1.5pt);
						\filldraw[black] (22mm,22mm) circle (1.5pt);
						\filldraw[black] (26mm,26mm) circle (1.5pt);
						\filldraw[black] (30mm,30mm) circle (1.5pt);
						\filldraw[black] (34mm,34mm) circle (1.5pt);
                        \filldraw[black] (26mm,10mm) circle (1.5pt);
						\filldraw[black] (30mm,18mm) circle (1.5pt);
						\filldraw[black] (34mm,22mm) circle (1.5pt);
						\filldraw[black] (38mm,26mm) circle (1.5pt);
						\filldraw[black] (42mm,30mm) circle (1.5pt);
						\node at (24mm,-7mm) {(e)};
						
					\end{tikzpicture}
					\quad
					
					%%%%%%%%%%%%%%%%%%%%%6
					
					\begin{tikzpicture}[scale = 0.9]
						
						\def\rectanglepath{[fill=gray!20!white,draw=black!20!black]-- +(4mm,0mm) -- +(4mm,4mm) -- +(0mm,4mm) -- cycle}
						
						\def\squarepath{-- +(48mm,0mm) -- +(48mm,48mm) -- +(0mm,48mm) -- cycle}
						
						\draw (0mm,0mm)\squarepath;
						\draw [step=4mm,dotted] (0mm,0mm) grid (48mm,48mm);
						\draw (0mm,44mm)\rectanglepath;
						\draw (0mm,40mm)\rectanglepath;
						\draw (0mm,36mm)\rectanglepath;
						\draw (0mm,32mm)\rectanglepath;
						\draw (0mm,28mm)\rectanglepath;
						\draw (0mm,24mm)\rectanglepath;
						\draw (0mm,20mm)\rectanglepath;
						\draw (0mm,16mm)\rectanglepath;
						
						\draw (4mm,44mm)\rectanglepath;
						\draw (4mm,40mm)\rectanglepath;
						\draw (4mm,36mm)\rectanglepath;
						\draw (4mm,32mm)\rectanglepath;
						\draw (4mm,28mm)\rectanglepath;
						\draw (4mm,20mm)\rectanglepath;
						\draw (4mm,24mm)\rectanglepath;
						%\draw (4mm,12mm)\rectanglepath;
						
						\draw (8mm,44mm)\rectanglepath;
						\draw (8mm,40mm)\rectanglepath;
						\draw (8mm,36mm)\rectanglepath;
						\draw (8mm,32mm)\rectanglepath;
						\draw (8mm,24mm)\rectanglepath;
						\draw (8mm,28mm)\rectanglepath;
						%\draw (8mm,16mm)\rectanglepath;
						
						\draw (12mm,44mm)\rectanglepath;
						\draw (12mm,40mm)\rectanglepath;
						\draw (12mm,36mm)\rectanglepath;
						\draw (12mm,28mm)\rectanglepath;
						\draw (12mm,32mm)\rectanglepath;
						%\draw (12mm,20mm)\rectanglepath;
						
						\draw (16mm,44mm)\rectanglepath;
						\draw (16mm,40mm)\rectanglepath;
						\draw (16mm,32mm)\rectanglepath;
						\draw (16mm,36mm)\rectanglepath;
						%\draw (16mm,24mm)\rectanglepath;
						%\draw (16mm,8mm)\rectanglepath;
						
						\draw (20mm,44mm)\rectanglepath;
						\draw (20mm,36mm)\rectanglepath;
						\draw (20mm,40mm)\rectanglepath;
						%\draw (20mm,28mm)\rectanglepath;
						%\draw (20mm,16mm)\rectanglepath;
						
						\draw (24mm,44mm)\rectanglepath;
						\draw (24mm,40mm)\rectanglepath;
						%\draw (24mm,32mm)\rectanglepath;
						%\draw (24mm,20mm)\rectanglepath;
						%\draw (24mm,8mm)\rectanglepath;
						
						\draw (28mm,44mm)\rectanglepath;
						%\draw (28mm,36mm)\rectanglepath;
						%\draw (28mm,24mm)\rectanglepath;
						%\draw (28mm,16mm)\rectanglepath;
						
						\draw (32mm,44mm)\rectanglepath;
						%\draw (32mm,28mm)\rectanglepath;
						%\draw (32mm,20mm)\rectanglepath;
						
						\draw (36mm,44mm)\rectanglepath;
						%\draw (36mm,24mm)\rectanglepath;
						
						%\draw (40mm,28mm)\rectanglepath;
						
						\draw[thick](2mm,0mm)--(2mm,14mm)--(6mm,14mm)--(6mm,18mm)--(10mm,18mm)--(10mm,22mm)--(14mm,22mm)--(14mm,26mm)--(18mm,26mm)--(18mm,30mm)--(22mm,30mm)--(22mm,34mm)--(26mm,34mm)--(26mm,38mm)--(30mm,38mm)--(30mm,42mm)--(48mm,42mm);
						\draw[thick](6mm,0mm)--(6mm,10mm)--(18mm,10mm)--(18mm,14mm)--(48mm,14mm);
						\draw[thick](10mm,0mm)--(10mm,14mm)--(18mm,14mm)--(18mm,18mm)--(22mm,18mm)--(22mm,22mm)--(26mm,22mm)--(26mm,22mm)--(26mm,26mm)--(30mm,26mm)--(30mm,30mm)--(34mm,30mm)--(34mm,34mm)--(48mm,34mm);
						\draw[thick](14mm,0mm)--(14mm,18mm)--(18mm,18mm)--(18mm,22mm)--(22mm,22mm)--(22mm,26mm)--(26mm,26mm)--(26mm,30mm)--(30mm,30mm)--(30mm,34mm)--(34mm,34mm)--(34mm,38mm)--(48mm,38mm);
						\draw[thick](18mm,0mm)--(18mm,6mm)--(48mm,6mm);
						\draw[thick](22mm,0mm)--(22mm,10mm)--(26mm,10mm)--(26mm,18mm)--(30mm,18mm)--(30mm,22mm)--(34mm,22mm)--(34mm,26mm)--(38mm,26mm)--(38mm,30mm)--(42mm,30mm)--(42mm,46mm)--(48mm,46mm);
						\draw[thick](26mm,0mm)--(26mm,2mm)--(48mm,2mm);
						\draw[thick](30mm,0mm)--(30mm,10mm)--(48mm,10mm);
						\draw[thick](34mm,0mm)--(34mm,18mm)--(48mm,18mm);
						\draw[thick](38mm,0mm)--(38mm,22mm)--(48mm,22mm);
						\draw[thick](42mm,0mm)--(42mm,26mm)--(48mm,26mm);
						\draw[thick](46mm,0mm)--(46mm,30mm)--(48mm,30mm);

						%column number
						\node at (50mm,46mm) {\tiny{6}};
						\node at (50mm,42mm) {\tiny{1}};
						\node at (50mm,38mm) {\tiny{4}};
						\node at (50mm,34mm) {\tiny{3}};
						\node at (50mm,30mm) {\tiny{12}};
						\node at (50mm,26mm) {\tiny{11}};
						\node at (50mm,22mm) {\tiny{10}};
						\node at (50mm,18mm) {\tiny{9}};
						\node at (50mm,14mm) {\tiny{2}};
						\node at (50mm,10mm) {\tiny{8}};
						\node at (50mm,6mm) {\tiny{5}};
						\node at (50mm,2mm) {\tiny{7}};
						%
						%row number
						\node at (2mm,-2mm) {\tiny{1}};
						\node at (6mm,-2mm) {\tiny{2}};
						\node at (10mm,-2mm) {\tiny{3}};
						\node at (14mm,-2mm)  {\tiny{4}};
						\node at (18mm,-2mm) {\tiny{5}};
						\node at (22mm,-2mm) {\tiny{6}};
						\node at (26mm,-2mm) {\tiny{7}};
						\node at (30mm,-2mm) {\tiny{8}};
						\node at (34mm,-2mm) {\tiny{9}};
						\node at (38mm,-2mm) {\tiny{10}};
						\node at (42mm,-2mm) {\tiny{11}};
						\node at (46mm,-2mm) {\tiny{12}};
						\filldraw[black] (6mm,14mm) circle (1.5pt);
						\filldraw[black] (10mm,18mm) circle (1.5pt);
						\filldraw[black] (14mm,22mm) circle (1.5pt);
						\filldraw[black] (18mm,26mm) circle (1.5pt);
						\filldraw[black] (22mm,30mm) circle (1.5pt);
						\filldraw[black] (26mm,34mm) circle (1.5pt);
						\filldraw[black] (30mm,38mm) circle (1.5pt);
						\filldraw[black] (18mm,10mm) circle (1.5pt);
						\filldraw[black] (22mm,18mm) circle (1.5pt);
						\filldraw[black] (26mm,22mm) circle (1.5pt);
						\filldraw[black] (30mm,26mm) circle (1.5pt);
						\filldraw[black] (34mm,30mm) circle (1.5pt);
						
						\filldraw[black] (26mm,10mm) circle (1.5pt);
						\filldraw[black] (30mm,18mm) circle (1.5pt);
						\filldraw[black] (34mm,22mm) circle (1.5pt);
						\filldraw[black] (38mm,26mm) circle (1.5pt);
						\filldraw[black] (42mm,30mm) circle (1.5pt);
						\node at (24mm,-7mm) {(f)};
						
					\end{tikzpicture}
				\end{tabular}
				\caption{The algorithm  on the snow Rothe pipedream of $w=[6,1,4,3,12,11,10,9,2,8,5,7]$.}\label{ex_6143}
			\end{center}
		\end{figure}
\begin{example}
        In Figure \ref{ex_6143}, starting from the snow Rothe pipedream $\SRPD(w)$ of $w=[6,1,4,3,12,11,\\
        10,9,2,8,5,7]$ (Figure \ref{ex_6143}(a)), we apply the algorithm sequentially to pipes 2, 3, 4, and 1 to obtain Figure \ref{ex_6143}(b). The operations on these pipes are omitted here because they involve only the ``Drooping'' algorithm; thus, the transition from (a) to (b) is achieved directly.
		Next, applying the ``Drooping" algorithm to pipe 6 transforms (b) into (c). This operation, in turn, sequentially triggers the ``Undrooping" algorithm on pipes 1, 4, and 3, leading from (c) through to (f). The red line marks the pipe that will execute the ``Undrooping" algorithm and the thick segment represents a long line. 
	\end{example}

    We end this section with an example to show that this algorithm is not the composition of constructions defined by Chou and Yu~\cite{cy24} and Huang, Shimozono, and Yu~\cite{HSY24}.
\begin{example}
    \label{e:hsy}
    For $w=21453$, the left marked bumpless pipedream is the unique maximal marked bumpless pipedream constructed from our algorithm. The right pipedream is obtained by applying the inverse of the bijection in~\cite{HSY24} to the left marked bumpless pipedream. The right pipedream is not the unique maximal pipedream constructed in~\cite{cy24} since its column weight is not $(2,1,2,0,0)$.
		\begin{center}
		\begin{tabular}{cc}
          \begin{tikzpicture}[scale = 1]
        							
        							\def\rectanglepath{[fill=gray!20!white,draw=black!20!black]-- +(4mm,0mm) -- +(4mm,4mm) -- +(0mm,4mm) -- cycle}
        							
        							\def\squarepath{-- +(20mm,0mm) -- +(20mm,20mm) -- +(0mm,20mm) -- cycle}

        							\draw (0mm,0mm)\squarepath;
        							\draw [step=4mm,dotted] (0mm,0mm) grid (20mm,20mm);
        							\draw (0mm,16mm) \rectanglepath;
        \draw (0mm,12mm) \rectanglepath;
        \draw (4mm,16mm) \rectanglepath;
        \draw (8mm,4mm) \rectanglepath;
        \filldraw[black] (10mm,10mm) circle (1.5pt);
        							\draw[thick](2mm,0mm)--(2mm,10mm)--(10mm,10mm)--(10mm,14mm)--(20mm,14mm);
        							\draw[thick](6mm,0mm)--(6mm,14mm)--(10mm,14mm)--(10mm,18mm)--(20mm,18mm);
        							\draw[thick](10mm,0mm)--(10mm,2mm)--(20mm,2mm);
        							\draw[thick](14mm,0mm)--(14mm,10mm)--(20mm,10mm);
        							\draw[thick](18mm,0mm)--(18mm,6mm)--(20mm,6mm);					
        %column number
        \node at (22mm,18mm) {\small{2}};
        \node at (22mm,14mm) {\small{1}};
        \node at (22mm,10mm) {\small{4}};
        \node at (22mm,6mm) {\small{5}};
        \node at (22mm,2mm) {\small{3}};
        							
        %row number
        \node at (2mm,-2mm) {\small{1}};
        \node at (6mm,-2mm) {\small{2}};
        \node at (10mm,-2mm) {\small{3}};
        \node at (14mm,-2mm)  {\small{4}};
        \node at (18mm,-2mm) {\small{5}};
        %\node at (12mm,-8mm) {\small{(a)}};
        \end{tikzpicture}&
        \quad \quad \quad \quad
        \begin{tikzpicture}[x=1em,y=1em,thick,rounded corners, color = black]
        \draw[step=1,gray,ultra thin,dashed] (0,0) grid (1,1);
        \draw[step=1,gray,ultra thin,dashed] (0,1) grid (2,2);
        \draw[step=1,gray,ultra thin,dashed] (0,2) grid (3,3);
        \draw[step=1,gray,ultra thin,dashed] (0,3) grid (4,4);
        \draw[step=1,gray,ultra thin,dashed] (0,4) grid (5,5);
        \draw (0,0.5)--(0.5, 0.5)--(0.5, 3.5)--(2.5, 3.5)--(2.5, 5);
        \draw (0,1.5)--(1.5, 1.5)--(1.5, 2.5)--(2.5, 2.5)--(2.5, 3.5)--(3.5, 3.5)--(3.5, 4.5)--(4.5, 4.5)--(4.5, 5);
        \draw (0,2.5)--(1.5,2.5)--(1.5,4.5)--(3.5,4.5)--(3.5,5);
        \draw (0,3.5)--(0.5,3.5)--(0.5,5);
        \draw (0,4.5)--(1.5, 4.5)--(1.5, 5);
        %column number
        \node at (-2mm,18mm) {\small{2}};
        \node at (-2mm,14mm) {\small{1}};
        \node at (-2mm,10mm) {\small{4}};
        \node at (-2mm,6mm) {\small{5}};
        \node at (-2mm,2mm) {\small{3}};
        							
        %row number
        \node at (2mm,24mm) {\small{1}};
        \node at (6mm,24mm) {\small{2}};
        \node at (10mm,24mm) {\small{3}};
        \node at (14mm,24mm)  {\small{4}};
        \node at (18mm,24mm) {\small{5}};
        %\node at (12mm,-4mm) {\small{(b)}};
        \end{tikzpicture}
        \end{tabular}
		\end{center}
\end{example}
	
	\section{Proof of Theorem~\ref{T: Main Theorem}}
	\label{S: Proof of main theorem}
	
	We now prove that the algorithm produces the desired marked bumpless pipedream. We call the combined step of moving left and then downward a \defi{bi-step}. There are four bi-steps in Figure \ref{bi}, one of which is marked by a red line. Let $num_{*}(p)$ be the number of {\large{$*$}} on pipe $p$.
    
	\begin{figure}[H]
			\begin{tikzpicture}[scale = 1]
				\def\rectanglepath{[fill=gray!20!white,draw=black!20!black]-- +(4mm,0mm) -- +(4mm,4mm) -- +(0mm,4mm) -- cycle}
				
				\draw [step=4mm,dotted] (-4mm,-4mm) grid (24mm,24mm);
				\draw[thick](2mm,-2mm)--(2mm,2mm)--(6mm,2mm)--(6mm,6mm)--(14mm,6mm)--(14mm,10mm)--(18mm,10mm);
				\draw [thick](18mm,10mm)--(18mm,18mm)--(22mm,18mm);
				\draw[thick](2mm,-2mm)--(2mm,2mm)--(6mm,2mm)--(6mm,6mm)--(14mm,6mm);
				\draw[thick][red](6mm,2mm)--(6mm,6mm)--(14mm,6mm);
				\draw[thick](14mm,6mm)--(14mm,10mm)--(18mm,10mm);
				
				\draw [thick](0mm,14mm)--(22mm,14mm);
				\draw [thick](10mm,0mm)--(10mm,18mm);
			\end{tikzpicture}
			\caption{Illustration of a bi-step: the left-then-down movement of a pipe during the algorithm.}\label{bi}
	\end{figure}
    
	\begin{lem}\label{l1}
		(1) The number of new empty boxes generated by the ``Drooping" algorithm in the starting row of pipe $p$ is equal to the number of {\large{$*$}} on pipe $p$.
		
		(2) Assume that after pipe $p$ completes the ``Drooping" algorithm, there are no long lines remaining. Right after pipe $p$ completes the algorithm, the number of empty boxes added on its left side decreases by one in each successive row that contains a $\rtile$ of $p$, until it reaches zero (see Figure \ref{emptybox}). This holds even if other pipes pass through pipe $p$ during the process.
	\end{lem}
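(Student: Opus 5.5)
We will prove both parts by following pipe $p$ through the local rules (1)--(7) of the Drooping algorithm. We track, cell by cell, which tiles turn into $\btile$ and which $\btile$ disappear.

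\emph{Part (1).} Let $t=num_{*}(p)$, and let $i_0=w^{-1}(p)$ be the starting row of $p$. We first check that the $\htile$s of $p$ counted from its $\rtile$ lie in row $i_0$. The snow cells on $p$ are the empty cells above a dark cloud, and the part of $p$ in its starting row is a horizontal run beginning at the $\rtile$. The number of such $\htile$ cells is at least $t$, because each $*$ on $p$ sits on a $\htile$ of $p$ in row $i_0$; this is a property of $\SRPD(w)$.

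Replacing the $t$-th $\htile$ $(i_0,j)$ by a $\rtile$ detaches the old segment consisting of the $\rtile$ at $(i_0,p)$ and the first $t-1$ $\htile$s, which is $t$ cells in total. These cells are exactly the cells between column $p$ and column $j$ in row $i_0$. The redrawn pipe now leaves $(i_0,j)$ downward and never returns to row $i_0$, since all rules move it only left or down.

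The old vertical part of $p$ in column $p$ below row $i_0$ is abandoned. Each cell of it that was a $\vtile$ crossed by nothing becomes $\btile$, and each $\ptile$ becomes a horizontal tile of the crossing pipe. These cells lie strictly below row $i_0$, so they do not affect the count in row $i_0$. In row $i_0$ the $t$ detached cells were pure $\htile$ or $\rtile$ of $p$; we check this from the structure of $\RPD(w)$, where no other pipe crosses $p$'s horizontal run between columns $p$ and $j$ (those columns are snow columns). They therefore all become $\btile$. No cell of row $i_0$ can become non-empty, since $p$ never re-enters that row. This gives exactly $t$ new empty boxes in row $i_0$.

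\emph{Part (2).} We analyze the redrawn path as a sequence of bi-steps. In each bi-step the pipe moves left along a row $r$ and then turns down at a $\rtile$ or $\mtile$ into row $r+1$.

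The claim is an invariant. Let $e_r$ be the number of empty boxes created to the left of $p$ in row $r$. We will show that $e_{r'}=e_r-1$ whenever $r'$ is the next row below $r$ containing a $\rtile$ of $p$, as long as $e_r>0$. The mechanism is as follows. In the original Rothe configuration, the $\btile$ cells that $p$'s leftward runs consume come from rule (7) (a $\btile$ entered from the right becomes a $\rtile$) and rule (6) (a $\btile$ entered from the top becomes a $\mtile$). Meanwhile, the abandoned vertical segment of $p$ and the shortened horizontal runs release cells. We verify case by case over rules (2)--(7) that each bi-step shifts the horizontal run one column further right relative to the old position. Consequently the released region is a staircase whose width drops by one per $\rtile$-row.

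Rules (2) and (3), the p-crosses, do not change the emptiness of any cell. This is exactly why the statement holds "even if other pipes pass through $p$". Rules (4) and (5), the c-crosses, turn at an existing pipe's corner without consuming a $\btile$. We will argue that they occur only after the staircase has reached zero, or else that they preserve the column offset.

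The main obstacle is Part (2). Its difficulty is making the "staircase" bookkeeping rigorous when $p$ meets other pipes via c-crosses. To handle it, we will use the hypothesis that no long lines remain after Drooping. This forces every horizontal segment of $p$ outside its starting row to have length exactly one between turns, i.e.\ each row of $p$ is a single bi-step. Then we induct on the rows of $p$ from top to bottom, comparing the number of $\btile$ in row $r$ to the left of $p$ before and after redrawing.
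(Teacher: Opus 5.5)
Your strategy is the paper's: follow $p$ bi-step by bi-step, and use that p-crosses neither create nor consume empty cells. However, two concrete claims you build on are false.

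\textbf{Part (1).} It is not true that no pipe crosses the detached part of $p$'s run. Nor is it true that the columns between $p$ and $j$ are snow columns. Take Figure~\ref{bumpless} ($w=5241736$): pipe $1$ has starting row $4$, $t=1$, and its star at $(4,6)$.
\begin{itemize}
\item Its run is crossed at $(4,2)$ by the still-vertical pipe $2$.
\item So the first $\htile$ is $(4,3)$. It must mean a pure one, since a crossing cannot become a $\rtile$.
\item $(4,3)$ is not a snow cell, and $j-p=2\neq t$.
\end{itemize}
The count $t$ still holds, but for a different reason.
\begin{itemize}
\item Pipes processed earlier lie strictly below row $i_0$, so row $i_0$ is still as in $\RPD(w)$.
\item The detached cells are the old $\rtile$, the first $t-1$ pure $\htile$s, and crossings with unprocessed vertical pipes.
\item Only those $t$ cells become $\btile$; the crossings revert to $\vtile$s of the other pipes.
\end{itemize}

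\textbf{Part (2).} The hypothesis ``no long lines remain'' constrains other pipes, not $p$.
\begin{itemize}
\item By rules (1)--(7), $p$ moving down only produces $\ptile$ or $\mtile$, and moving left only $\ptile$ or $\rtile$. So $p$ never carries a $\htile$ outside row $i_0$ anyway.
\item Long lines arise only when a $\ptile$ on $p$'s abandoned vertical reverts to another pipe's $\htile$.
\item The hypothesis therefore only ensures that no undrooping intervenes before you count.
\end{itemize}
Hence your premise that each run of $p$ below row $i_0$ has ``length exactly one between turns'' is false. In the same step of Figure~\ref{bumpless}, where no long line appears, pipe $1$'s run in row $5$ is:
\begin{itemize}
\item $\mtile$ at $(5,3)$,
\item $\ptile$ with pipe $2$ at $(5,2)$,
\item $\rtile$ at $(5,1)$.
\end{itemize}
The turn's column offset from column $1$ drops from $2$ to $0$, so the induction cannot rest on a one-column shift per bi-step. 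What drops by exactly one is the number of cells between column $p$ and the turn that are not occupied by other pipes.

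The argument you need (which is what the paper's brief proof asserts) is the following.
\begin{itemize}
\item Moving down, $p$ turns left at the first cell that is not a $\htile$.
\item Moving left, $p$ turns down at the first cell that is not a $\vtile$, or at column $p$.
\item Rules (4) and (5) place the turn exactly where rules (6) and (7) would. So c-crosses do not disturb the staircase.
\item Rules (2) and (3) only carry $p$ straight through cells occupied by other pipes.
\end{itemize}
With this, you do not need your first alternative, that c-crosses occur only after the count reaches zero, and you gave no argument for it.
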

    
	\begin{figure}[h]
		\begin{center}
			\begin{tabular}{cc}

				\begin{tikzpicture}[scale = 1]

					\draw [step=4mm,dotted] (-4mm,-4mm) grid (24mm,24mm);

					\node at (18mm,18mm) {{\large $*$} };
					\node at (14mm,18mm) {{\large $*$} };
					\node at (10mm,18mm) {{\large $*$} };
					
					\draw[thick](2mm,0mm)--(2mm,18mm)--(20mm,18mm);

				\end{tikzpicture}
				
				&
				
				\quad
				
				\begin{tikzpicture}[scale = 1]
					
					\def\rectanglepath{[fill=gray!20!white,draw=black!20!black]-- +(4mm,0mm) -- +(4mm,4mm) -- +(0mm,4mm) -- cycle}

					%------
					\draw [step=4mm,dotted] (-4mm,-4mm) grid (24mm,24mm);
					\draw [thin](0mm,16mm)\rectanglepath;
					\draw [thin](0mm,12mm)\rectanglepath;
					\draw [thin](0mm,8mm)\rectanglepath;
					\draw [thin](4mm,16mm)\rectanglepath;
					\draw [thin](4mm,12mm)\rectanglepath;
					\draw [thin](8mm,16mm)\rectanglepath;

					\draw[thick](2mm,0mm)--(2mm,6mm)--(6mm,6mm)--(6mm,10mm)--(10mm,10mm)--(10mm,14mm)--(14mm,14mm)--(14mm,18mm)--(20mm,18mm);

				\end{tikzpicture}
				
			\end{tabular}
			\caption{Newly added empty boxes on the left side of the pipe during ``Drooping".}\label{emptybox}
		\end{center}
	\end{figure}
	\begin{proof}
		(1) It is clear from the algorithm. 
		
		(2) Suppose $num_{*}(p) = t$. At the redrawing stage of pipe \(p\), once the second \(\rtile\) on pipe \(p\) is generated, the number of \(\btile\) newly added to the left of this \(\rtile\) in that row is \(t-1\). 
		This pattern continues, the number of $\btile$ newly added to the left side of pipe $p$ decreases by one in each subsequent row.
		
		It can be observed that if other pipes pass through pipe $p$ horizontally or vertically, it does not affect the result. This is because when pipe $p$ encounters $\htile$ or $\vtile$, the algorithm's rule is to pass through directly. Therefore, by the definition of a bi-step, the conclusion still holds.
	\end{proof}
	
	\begin{defn}
		For a row $i$ containing two distinct pipes, denote the more leftward pipe as $p$ and the more rightward pipe as $q$. Let $j_1$ be the largest column index such that $(i, j_1)$ contains pipe $p$, and $j_2$ be the smallest column index such that $(i, j_2)$ contains pipe $q$.
		
		The \defi{gap} between the two pipes in row $i$ is defined as $j_2 - j_1$.
	\end{defn}
	
	\begin{lem}\label{l2}
        If two pipes do not cross in $\RPD(w)$, then they do not cross in any intermediate step of the algorithm.
	\end{lem}
	\begin{proof}
        Let pipe $p,q$ be a pair of non-crossing pipes such that the starting row of pipe $p$ is below the starting row of pipe $q$, i.e. $w^{-1}(p) > w^{-1}(q)$. Denote the terminating columns of pipes $q$ and $p$ as $j_1$ and $j_2$, respectively and their starting rows as $i_1$ and $i_2$. So we have that $j_1 = q$, $j_2 = p$, $i_1 = w^{-1}(q)$ and $i_2 = w^{-1}(p)$. 
		
		{\bf Case 1.}
		First, we discuss the simplest scenario: consider a rectangle whose NW corner is at $(i_1, j_1)$ and SE corner is at $(i_1 + num_*(p)+l, j_1 + num_*(q))$, and assume that before pipe $p$ executes the ``Drooping" algorithm, there are no other pipes besides $p$ and $q$ within this rectangle. Suppose in this rectangle there are $l$ empty rows and $k$ empty columns between the two pipes, where $k$ and $l$ can be any positive integers. 
		
		\begin{figure}[h]
			\begin{center}
				\begin{tabular}{cccc}
					
					%%%1
					
					\begin{tikzpicture}[scale = 0.9]

						%-------
						\draw [step=4mm,dotted] (-12mm,-8mm) grid (24mm,24mm);
						
						\draw[thick](-6mm,-8mm)--(-6mm,18mm)--(20mm,18mm);
						\draw[thick](2mm,-8mm)--(2mm,6mm)--(20mm,6mm);
						
						\node at (-2mm,18mm) {{\large $*$} };
						\node at (18mm,18mm) {{\large $*$} };
						\node at (6mm,18mm) {{\large $*$} };
						\node at (10mm,18mm) {{\large $*$} };
						\node at (14mm,18mm) {{\large $*$} };
						\node at (6mm,6mm) {{\large $*$} };
						\node at (10mm,6mm) {{\large $*$} };

						\node at (-6mm,-10mm) {$q$};
						\node at (2mm,-10mm) {$p$};
						\node[scale=0.8] at (-6mm,26mm) {$j_1$};
						\node[scale=0.8] at (2mm,26mm) {$j_2$};
						\node[scale=0.8] at (-10mm,18mm) {$i_1$};
						\node[scale=0.8] at (-10mm,6mm) {$i_2$};
						\node at (20mm,12mm) {$l$};
						\node [scale=2] at (16mm,12mm) {$\}$};
						
						\node at (-2mm,-6mm) {$k$};
						\node[rotate = 270] at (-2mm,-2mm) {$\}$};
						
					\end{tikzpicture}
					&
					
					\quad
					%%%2
					\begin{tikzpicture}[scale = 0.9]

						%-------
						\draw [step=4mm,dotted] (-12mm,-8mm) grid (24mm,24mm);
						
						\draw[thick](-6mm,-8mm)--(-6mm,18mm)--(20mm,18mm);
						\draw[thick](2mm,-8mm)--(2mm,-2mm)--(6mm,-2mm)--(6mm,2mm)--(10mm,2mm)--(10mm,6mm)--(20mm,6mm);
						
						\node at (-2mm,18mm) {{\large $*$} };
						\node at (18mm,18mm) {{\large $*$} };
						\node at (6mm,18mm) {{\large $*$} };
						\node at (10mm,18mm) {{\large $*$} };
						\node at (14mm,18mm) {{\large $*$} };
						\node[scale=0.8] at (10mm,26mm) {$j_3$};
						\node[scale=0.8] at (-6mm,26mm) {$j_1$};
						\node[scale=0.8] at (2mm,26mm) {$j_2$};
						\node[scale=0.8] at (-10mm,18mm) {$i_1$};
						\node[scale=0.8] at (-10mm,6mm) {$i_2$};
						\node[scale=0.8] at (-10mm,-2mm) {$i_3$};
						
						\node at (-6mm,-10mm) {$q$};
						\node at (2mm,-10mm) {$p$};
						
					\end{tikzpicture}
					&
					
					\quad
					%%%3
					\begin{tikzpicture}[scale = 0.9]

						%-------
						\draw [step=4mm,dotted] (-12mm,-8mm) grid (24mm,24mm);
						
						\draw[thick](-6mm,-8mm)--(-6mm,-2mm)--(-2mm,-2mm)--(-2mm,2mm)--(2mm,2mm)--(2mm,6mm)--(6mm,6mm)--(6mm,10mm)--(10mm,10mm)--(10mm,14mm)--(14mm,14mm)--(14mm,18mm)--(20mm,18mm);
						\draw[thick](2mm,-8mm)--(2mm,-2mm)--(6mm,-2mm)--(6mm,2mm)--(10mm,2mm)--(10mm,6mm)--(20mm,6mm);
						
						\node[scale=0.8] at (10mm,26mm) {$j_3$};
						\node[scale=0.8] at (-6mm,26mm) {$j_1$};
						\node[scale=0.8] at (2mm,26mm) {$j_2$};
						\node[scale=0.8] at (-10mm,18mm) {$i_1$};
						\node[scale=0.8] at (-10mm,6mm) {$i_2$};
						\node[scale=0.8] at (-10mm,-2mm) {$i_3$};
						
						\node at (-6mm,-10mm) {$q$};
						\node at (2mm,-10mm) {$p$};
						
					\end{tikzpicture}
					
				\end{tabular}
				\caption{Positional relationship between two pipes $p$ and $q$ in the rectangle before Drooping.}
			\end{center}
		\end{figure}
		
		\begin{figure}[h]
			\begin{center}
				\begin{tabular}{cc}
					\begin{tikzpicture}[scale = 0.9]
						
						\draw [step=4mm,dotted] (-12mm,-8mm) grid (24mm,24mm);
						
						\draw[thick](6mm,6mm)--(6mm,10mm)--(10mm,10mm)--(10mm,14mm)--(14mm,14mm)--(14mm,18mm)--(20mm,18mm);
						\draw[thick](2mm,-8mm)--(2mm,-2mm)--(6mm,-2mm)--(6mm,2mm)--(10mm,2mm)--(10mm,6mm)--(20mm,6mm);
						
						\node[scale=0.8] at (10mm,26mm) {$j_3$};
						\node[scale=0.8] at (-6mm,26mm) {$j_1$};
						\node[scale=0.8] at (2mm,26mm) {$j_2$};
						\node[scale=0.8] at (-10mm,18mm) {$i_1$};
						\node[scale=0.8] at (-10mm,6mm) {$i_2$};
						\node[scale=0.8] at (-10mm,-2mm) {$i_3$};
						\node at (6mm,-12mm){(a)};
						
					\end{tikzpicture}
					&
					\quad
					\begin{tikzpicture}[scale = 0.9]
						%-------
						\draw [step=4mm,dotted] (-12mm,-8mm) grid (24mm,24mm);
						
						\draw[thick](-2mm,-2mm)--(-2mm,2mm)--(2mm,2mm)--(2mm,6mm)--(6mm,6mm)--(6mm,10mm)--(10mm,10mm)--(10mm,14mm)--(14mm,14mm)--(14mm,18mm)--(20mm,18mm);
						\draw[thick](2mm,-8mm)--(2mm,-2mm)--(6mm,-2mm)--(6mm,2mm)--(10mm,2mm)--(10mm,6mm)--(20mm,6mm);
						
						\node[scale=0.8] at (10mm,26mm) {$j_3$};
						\node[scale=0.8] at (-6mm,26mm) {$j_1$};
						\node[scale=0.8] at (2mm,26mm) {$j_2$};
						\node[scale=0.8] at (-10mm,18mm) {$i_1$};
						\node[scale=0.8] at (-10mm,6mm) {$i_2$};
						\node[scale=0.8] at (-10mm,-2mm) {$i_3$};
						\node at (6mm,-12mm){(b)};
					\end{tikzpicture}
					
				\end{tabular}
				\caption{Intermediate stages during the ``Drooping" of pipe $q$, illustrating the gap calculation.}\label{drawnew}
			\end{center}
		\end{figure}
		
		After applying the ``Drooping" algorithm on pipe $p$, denote the row containing the $\rtile$ of pipe $p$ in column $j_2$ as $i_3$ and the column containing $\rtile$ of pipe $p$ in row $i_2$ as $j_3$; formally, $i_3 = i_2 + num_*(p), j_3 = j_2 + num_*(p)$.

    We first consider the extremal case in which pipe \(q\) has the maximum possible number of {\large{$*$}}. That is, below row $i_1$ and between columns $j_1$ and $j_2$, there are $k$ dark clouds; to the right of column $j_1$ and between rows $i_1$ and $i_2$, there are $l$ dark clouds. Assuming $num_{*}(p)=t$ then $num_{*}(q)=t+k+l$.

		In row $i_2$, pipe $p$ begins its first bi-step by placing a $\rtile$ at $(i_2,j_3)$, at which point $j_3=j_1+t + k+1$.
		On the other hand, pipe $q$ needs $l+1$ bi-steps to reach the $i_2$-th row. 
        By Lemma \ref{l1}, \(k+t\) new empty boxes appear to the left of pipe \(q\) in row \(i_2\), as shown in Figure \ref{drawnew} (a). 
		At this point, in row $i_2$, $\rtile$ of pipe $p$ is in column $j_3 = j_1 + k +t+1 $, $\mtile$ of pipe $q$ is in column $ j_1 + k +t $. So, the gap between pipe $p$ and $q$ in row $i_2$ is $1$. At the same time, pipe $q$ requires $t+l+1$ bi-steps to reach row $i_3$. By Lemma \ref{l1}, upon reaching this row, pipe \(q\) gains \(k\) new empty boxes on its left side, as shown in Figure \ref{drawnew}(b). 
		In row $i_3$, $\rtile$ of pipe $p$ is in column $j_2 = j_1 +k+1$, $\mtile$ of pipe $q$ is in column $ j_1 + k $. So, the gap between pipe $p$ and $q$ in row $i_3$ is $1$. By Lemma \ref{l1}, they also do not cross between row $i_2$ and $i_3$.

        If $ num_{*}(p)+k+l > num_{*}(q) $, then the horizontal gap between \(p\) and \(q\) in each relevant row is at least as large as in the extremal case considered above. In particular, the gap is always at least \(1\), and hence the two pipes do not cross.

		{\bf Case 2.} In this case, there exists another pipe $r$ inside the rectangle whose NW corner is $(i_1, j_1)$ and SE corner is $(i_1 + l + num_*(p), j_1 + num_*(q))$.
		
		\begin{itemize}
			
			\item[(1)] If $w^{-1}(q) < w^{-1}(r) < w^{-1}(p)$,  at this point, the starting row of pipe $r$  lies between the starting rows of pipes $p$ and $q$.
			
			\begin{itemize}
				\item[$\bullet$] If $q < r < p$, reasoning analogous to Case 1 shows that $r$ does not cross $p$ and $q$ does not cross $r$. Therefore, $p$ and $q$ are also disjoint, so the conclusion remains valid.
				
				\item[$\bullet$] If $r < q < p$ and there is a p-cross between pipe \(q\) and \(r\) in column \(q\) after the “Drooping” algorithm on pipe \(q\) is applied. Compared with Case 1, one of the empty rows between pipe $p$ and $q$ is occupied by pipe $r$.
				In this situation, to the right of column $j_1$ and between rows $i_1$ and $i_2$, the maximum possible number of dark clouds is $l-1$. Hence the maximum possible value of $num_*(q)$ decreases by 1.  It follows from Case 1 that pipes $p$ and $q$ still do not cross.
				
				If $r < q < p$ and there is no $\ptile$ between pipe \(q\) and \(r\) in column \(q\) after the ``Drooping" algorithm on pipe $q$ is applied, as shown in Figure \ref{10}. According to the algorithm, pipe $q$ can only pass through pipe $r$ in the starting row of $r$, that is, row $w^{-1}(r)$. This may cause pipe $p$ to cross pipe $q$.
				Based on the Case 1, one of the empty rows between pipe $p$ and $q$ is occupied by pipe $r$, resulting in the maximum possible value of $num_*(q)$ being reduced by $1$ compared to Case 1.
				We now consider the situation that pipe $q$ has the most {\large{$*$}}, that is , $num_*(q)=t+k+l-1$.
				When pipe $q$ passes through pipe $r$, it proceeds one row farther downward. Pipe $q$ requires $l$ bi-steps to reach row $i_2$, at which point it is located in column $j_1+t+k$. Similarly, when there are \( c \) pipes analogous to \( r \), the same conclusion holds. And pipe $p$ in row $i_2$ is in column $j_3 = j_1+t+k+1$. So the relative positions of pipes $q$ and $p$ remain the same as in Case 1. Equivalently, each additional downward passage through such a pipe reduces the maximum possible value of $num_*(q)$ by exactly one. Abstractly, this amounts to the number of extra tiles it traverses when passing downward through pipes such as \(r\) being ``offset" by the number of {\large{$*$}} that pipe \(q\) lacks. Therefore the gap between \(p\) and \(q\) remains at least \(1\), the conclusion still holds. 
                
				\begin{figure}[h]
					\begin{center}
						\begin{tabular}{cc}
							
							%%%1
							
							\begin{tikzpicture}[scale = 0.9]
								
								%-------
								\draw [step=4mm,dotted] (-14mm,-8mm) grid (24mm,24mm);
								
								\draw[thick](-6mm,-8mm)--(-6mm,18mm)--(20mm,18mm);
								\draw[thick](2mm,-8mm)--(2mm,6mm)--(20mm,6mm);
								\draw[thick](-14mm,-8mm)--(-14mm,14mm)--(20mm,14mm);
								
								\node at (-6mm,-10mm) {$q$};
								\node at (2mm,-10mm) {$p$};
								\node at (-14mm,-10mm) {$r$};
								\node[scale=0.8] at (-6mm,26mm) {$j_1$};
								\node[scale=0.8] at (2mm,26mm) {$j_2$};
								\node[scale=0.8] at (-18mm,18mm) {$i_1$};
								\node[scale=0.8] at (-18mm,6mm) {$i_2$};

							\end{tikzpicture}
							&
							
							\quad
							%%%2
							\begin{tikzpicture}[scale = 0.9]
								
								%-------
								\draw [step=4mm,dotted] (-18mm,-8mm) grid (24mm,24mm);
								
								\draw[thick](-14mm,-8mm)--(-14mm,-2mm)--(-10mm,-2mm)--(-10mm,2mm)--(-2mm,2mm)--(-2mm,6mm)--(2mm,6mm)--(2mm,10mm)--(6mm,10mm)--(6mm,14mm)--(20mm,14mm);
								\draw[thick](-6mm,-8mm)--(-6mm,-2mm)--(-2mm,-2mm)--(-2mm,2mm)--(2mm,2mm)--(2mm,6mm)--(6mm,6mm)--(6mm,10mm)--(10mm,10mm)--(10mm,18mm)--(20mm,18mm);
								\draw[thick](2mm,-8mm)--(2mm,-2mm)--(6mm,-2mm)--(6mm,2mm)--(10mm,2mm)--(10mm,6mm)--(20mm,6mm);

								\node[scale=0.8] at (10mm,26mm) {$j_3$};
								\node[scale=0.8] at (-6mm,26mm) {$j_1$};
								\node[scale=0.8] at (2mm,26mm) {$j_2$};
								\node[scale=0.8] at (-18mm,18mm) {$i_1$};
								\node[scale=0.8] at (-18mm,6mm) {$i_2$};
								\node[scale=0.8] at (-18mm,-2mm) {$i_3$};
								
								\node at (-6mm,-10mm) {$q$};
								\node at (2mm,-10mm) {$p$};
								\node at (-14mm,-10mm) {$r$};
								
							\end{tikzpicture}
							
						\end{tabular}
						\caption{Case when $r < q < p$ and there is no cross between pipe \(q\) and \(r\) in column \(q\) after the ``Drooping" algorithm on pipe $q$ is applied.}\label{10}
					\end{center}
				\end{figure}
				
				\item[$\bullet$] If $q < p < r$ and there is a p-cross between pipe $r$ and pipe $p$ in column $r$ after the ``Drooping'' algorithm on pipe $r$ is applied. The same gap estimate as in the previous subcase applies, the gap between \(p\) and \(q\) remains at least \(1\). The number of extra tiles that pipe \(p\) traverses when moving leftward through pipes such as \(r\) ``offsets" the number of {\large{$*$}} that pipe \(q\) lacks. Therefore, pipes \(p\) and \(q\) remain disjoint. 
				
				If $q < p < r$ and there is no $\ptile$ between pipe $r$ and pipe $p$ in column $r$ after the ``Drooping'' algorithm on pipe $r$ is applied. According to the algorithm, pipe $r$ can only pass through pipe $p$ in the starting row of $p$, that is, row $w^{-1}(p)$. Pipe $r$ triggers ``Undrooping'' algorithm for pipe $p$.
				First, pipe $p$ executes the ``Drooping" algorithm. Since pipe $r$ passes through pipe $p$ at the starting row of $p$, its first turning point shifts one column right compared to Case 1, that is, to column $j_3 + 1$.
				So the first $\rtile$ appears at $(i_2,\; j_3 + 1)$. Then pipe $r$ performs the ``Drooping'' algorithm, 
                
                which forces pipe $p$ to undergo the ``Undrooping'' algorithm, as illustrated in Figure \ref{qpr} (b). After the undrooping, pipe $p$ returns to the position it would have had without the influence of $r$, i.e., the same position as pipe $p$ in Case 1, see Figure \ref{qpr} (c). So the earlier conclusion remains valid.
				If there are $c$ pipes analogous to $r$, the first $\rtile$ of pipe $p$ will be located at $(i_2,\; j_3 + c)$ and requires $c$ undrooping operations; ultimately it still returns to the position of pipe $p$ in Case 1.
\begin{figure}[h]
			\begin{center}
				\begin{tabular}{ccc}
					
					%%%1
					
					\begin{tikzpicture}[scale = 0.9]
						
						%-------
						\draw [step=4mm,dotted] (-12mm,-8mm) grid (24mm,24mm);
						
						\draw[thick](-6mm,-8mm)--(-6mm,18mm)--(20mm,18mm);
						\draw[thick](2mm,-8mm)--(2mm,6mm)--(20mm,6mm);
						\draw[thick](10mm,-8mm)--(10mm,10mm)--(20mm,10mm);
						
						\node at (-6mm,-10mm) {$q$};
						\node at (2mm,-10mm) {$p$};
						\node at (10mm,-10mm) {$r$};
						\node[scale=0.8] at (-6mm,26mm) {$j_1$};
						\node[scale=0.8] at (2mm,26mm) {$j_2$};
						\node[scale=0.8] at (-14mm,18mm) {$i_1$};
						\node[scale=0.8] at (-14mm,6mm) {$i_2$};
						
						\node at (6mm,-16mm) {(a)};

					\end{tikzpicture}
					&
					
					\quad
					%%%2
					\begin{tikzpicture}[scale = 0.9]
						
						%-------
						\draw [step=4mm,dotted] (-12mm,-8mm) grid (24mm,24mm);
						
						\draw[thick](-6mm,-8mm)--(-6mm,-2mm)--(-2mm,-2mm)--(-2mm,2mm)--(2mm,2mm)--(2mm,6mm)--(6mm,6mm)--(6mm,10mm)--(10mm,10mm)--(10mm,14mm)--(14mm,14mm)--(14mm,18mm)--(20mm,18mm);
						\draw[thick](10mm,-8mm)--(10mm,-2mm)--(14mm,-2mm)--(14mm,2mm)--(18mm,2mm)--(18mm,10mm)--(20mm,10mm);
						\draw[thick](2mm,-8mm)--(2mm,-2mm)--(6mm,-2mm)--(6mm,2mm)--(14mm,2mm)--(14mm,6mm)--(20mm,6mm);
						\node at (-6mm,-10mm) {$q$};
						\node at (2mm,-10mm) {$p$};
						\node at (10mm,-10mm) {$r$};			
						\node[scale=0.8] at (10mm,26mm) {$j_3$};
						\node[scale=0.8] at (-6mm,26mm) {$j_1$};
						\node[scale=0.8] at (2mm,26mm) {$j_2$};
						\node[scale=0.8] at (-14mm,18mm) {$i_1$};
						\node[scale=0.8] at (-14mm,6mm) {$i_2$};
						\node[scale=0.8] at (-14mm,-2mm) {$i_3$};
						\node at (6mm,-16mm) {(b)};
						
					\end{tikzpicture}
					&
					
					\quad
					%%%3
					\begin{tikzpicture}[scale = 0.9]
						
						%-------
						\draw [step=4mm,dotted] (-12mm,-8mm) grid (24mm,24mm);
						
						\draw[thick](-6mm,-8mm)--(-6mm,-2mm)--(-2mm,-2mm)--(-2mm,2mm)--(2mm,2mm)--(2mm,6mm)--(6mm,6mm)--(6mm,10mm)--(10mm,10mm)--(10mm,14mm)--(14mm,14mm)--(14mm,18mm)--(20mm,18mm);
						\draw[thick](10.5mm,-8mm)--(10.5mm,-2mm)--(14.5mm,-2mm)--(14.5mm,2mm)--(18.5mm,2mm)--(18.5mm,10mm)--(20mm,10mm);
						\draw[thick](2mm,-8mm)--(2mm,-2mm)--(6mm,-2mm)--(6mm,2mm)--(10mm,2mm)--(10mm,6mm)--(20mm,6mm);
						
						\node at (-6mm,-10mm) {$q$};
						\node at (2mm,-10mm) {$p$};
						\node at (10mm,-10mm) {$r$};				
						\node[scale=0.8] at (10mm,26mm) {$j_3$};
						\node[scale=0.8] at (-6mm,26mm) {$j_1$};
						\node[scale=0.8] at (2mm,26mm) {$j_2$};
						\node[scale=0.8] at (-14mm,18mm) {$i_1$};
						\node[scale=0.8] at (-14mm,6mm) {$i_2$};
						\node[scale=0.8] at (-14mm,-2mm) {$i_3$};
						\node at (6mm,-16mm) {(c)};
						
					\end{tikzpicture}
					
				\end{tabular}
				\caption{Case when $q < p < r$ and there is no cross between pipe $r$ and pipe $p$ in column $r$ after the ``Drooping'' algorithm on pipe $r$ is applied.}\label{qpr}
			\end{center}
		\end{figure}
            \item[(2)] If $w^{-1}(r) < w^{-1}(q)$, at this point, the starting row of pipe $r$  lies above the starting row of pipe $q$.
			The presence of pipe $r$ may cause $q$ to extend further upward, thereby increasing its distance from $p$.
			
			\item[(3)] If $w^{-1}(r) > w^{-1}(p)$, at this point, the starting row of pipe $r$  lies below the starting row of pipe $p$.
			The presence of pipe $r$ may cause $p$ to extend further downward, thereby increasing its distance from $q$.
			
			\end{itemize}
		\end{itemize}

	\end{proof}

	\begin{rem}\label{r1}
		Suppose pipes \(p\) and \(q\) cross and $w^{-1}(p) > w^{-1}(q)$. Then during the algorithm, any p-cross generated between \(p\) and \(q\) must lie on either the starting row or the terminating column of pipe \(p\). 
	\end{rem}
	
	\begin{lem}\label{l3}
		During the algorithm, there is at most one p-cross between any two pipes. That is, the configuration shown in Figure \ref{impossible} cannot occur. 
	\end{lem}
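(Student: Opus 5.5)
We argue by contradiction, using Remark~\ref{r1} together with the order in which the algorithm processes pipes. Suppose that at some stage two pipes $p$ and $q$ carry two p-crosses, as in Figure~\ref{impossible}. Lemma~\ref{l2} shows that non-crossing pairs of $\RPD(w)$ never cross, so $p$ and $q$ must already cross in $\RPD(w)$. Label them so that $w^{-1}(p) > w^{-1}(q)$; then $p<q$, and in $\RPD(w)$ their unique crossing is the p-cross at $(w^{-1}(p), q)$. By Remark~\ref{r1}, every p-cross between $p$ and $q$ produced during the algorithm lies either in row $w^{-1}(p)$ (the starting row of $p$) or in column $p$ (the terminating column of $p$). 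It therefore suffices to rule out three configurations: two p-crosses in row $w^{-1}(p)$, two p-crosses in column $p$, and one p-cross in each.

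\emph{Two p-crosses in the same row or the same column.} Pipe $p$ occupies its starting row $w^{-1}(p)$ only as a single horizontal run ending at the right edge. Pipe $q$ enters and leaves this row through vertical segments. Two p-crosses in this row would force $q$ to cross the row vertically twice, travelling downward and then upward. But pipes in a bumpless pipedream only move up and to the right, so this is impossible. The same monotonicity argument, with the roles of rows and columns exchanged, rules out two p-crosses in column $p$: pipe $p$ occupies that column as a single vertical run from the bottom edge.

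\emph{One p-cross in row $w^{-1}(p)$ and one in column $p$.} This is the main case. Call the two crossings $c_1=(w^{-1}(p),\beta)$ and $c_2=(\alpha,p)$. The vertical run of $p$ in column $p$ lies below or at its $\rtile$, and the horizontal run in row $w^{-1}(p)$ lies to the right of its last $\mtile$. Monotonicity of $q$ (up-right from the bottom) then forces $q$ to meet column $p$ before row $w^{-1}(p)$. The plan is to track which Drooping or Undrooping step could create each crossing:
\begin{enumerate}
\item A p-cross in the terminating column of $p$ is created only when $q$ is drooped and passes horizontally through a $\vtile$ of $p$ (rule (2)).
\item A p-cross in the starting row of $p$ arises either from the original crossing or from $q$ passing vertically through a $\htile$ of $p$ (rule (3)).
\end{enumerate}
Since pipes are processed in reverse order of starting row, $p$ is drooped before $q$. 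When $q$ is drooped, it must therefore pass through $p$'s column and then $p$'s row, and we use the gap computation from the proof of Lemma~\ref{l2} for this. The bi-step structure of Lemma~\ref{l1} shows that $q$ moves left-then-down in staircase fashion. After passing through column $p$ horizontally, $q$ is strictly to the left of column $p$, hence to the left of every $\htile$ of $p$ in row $w^{-1}(p)$; so no second crossing can occur in that row. Conversely, if $q$ first crosses row $w^{-1}(p)$ vertically at some column $\beta>p$, then below that row $q$ runs in columns $\ge$ the $\mtile$ positions of $p$. The gap estimate of Lemma~\ref{l2}, applied from the viewpoint of rows below $w^{-1}(p)$, keeps $q$ from ever reaching column $p$ with a horizontal segment.

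Finally, I would check that Undrooping preserves the property. A mini-undroop moves $q$ from the SE to the NW corner of its rectangle, which only converts c-crosses and p-crosses inside that rectangle into each other without creating new intersections with $p$, as in Proposition~\ref{standard}. I expect the mixed case to be the main obstacle: making the ``$q$ is left of column $p$ after crossing it'' argument rigorous when intermediate pipes $r$ shift $q$, as in Case~2 of Lemma~\ref{l2}. There I would again use the offset argument: each extra pass through such an $r$ is compensated by one fewer {\large{$*$}} on $q$.
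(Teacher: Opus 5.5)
Your set-up matches the paper's. Lemma~\ref{l2} forces $p,q$ to cross in $\RPD(w)$, so with $w^{-1}(p)>w^{-1}(q)$ you get $p<q$. Remark~\ref{r1} confines p-crosses to row $w^{-1}(p)$ and column $p$, and the two ``same line'' cases fall to monotonicity. There is, however, a genuine gap in the mixed case, which you flag as the main obstacle. What you give there is only a plan, and it rests on a false step: you say that when $q$ is drooped it ``must pass through $p$'s column and then $p$'s row.'' By Drooping rule (1), $q$ turns straight down as soon as it reaches column $q$. Since $p<q$, it never reaches column $p$ at all. The gap estimates, bi-step counting, ``offset'' argument and separate Undrooping check are therefore aimed at a situation that cannot arise, and none of them is made rigorous.

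The missing idea is a single line, and it is the paper's proof.
\begin{enumerate}
\item A p-cross between $p$ and $q$ in column $p$ sits on the vertical run of $p$, so $q$ must pass through it horizontally, entering column $p$ from the left.
\item Every pipe moves only up and to the right from its bottom entry, so this forces $q<p$. This is what Figure~\ref{impossible} depicts: $q$ terminates to the left of $p$.
\item Combined with $w^{-1}(q)<w^{-1}(p)$, the pair $\{p,q\}$ is non-crossing in $\RPD(w)$. This contradicts Lemma~\ref{l2}, or in your notation it contradicts $p<q$ directly.
\end{enumerate}
You actually state the key fact (``after passing through column $p$ horizontally, $q$ is strictly to the left of column $p$'') without noticing that it already gives the contradiction.

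This shows that no p-cross between $p$ and $q$ can lie in column $p$ at all. So both your ``two in column $p$'' case and the mixed case are vacuous, and your single-row monotonicity argument finishes the proof. Because the argument uses only the pipes' endpoints and monotonicity, it applies to every intermediate diagram at once. There is no need to track which Drooping or Undrooping move created a given crossing.
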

	
	\begin{proof}
		Let pipe $p$ be the pipe whose starting row is below that of pipe $q$; i.e., $w^{-1}(p) > w^{-1}(q)$. Assume that there is more than one p-cross between pipe \(p\) and \(q\). 
		By Remark \ref{r1}, any p-cross between $p$ and $q$ must lie on either the starting row or the terminating column of $p$, see Figure \ref{impossible}. But in this configuration, pipe \(q\) terminates to the left of pipe \(p\), hence \(q<p\). Together with \(w^{-1}(q)<w^{-1}(p)\), this means that \(p\) and \(q\) do not cross in \(\RPD(w)\). By Lemma~\ref{l2}, they cannot cross during the algorithm, a contradiction.
      	
		\begin{figure}[h]
			\begin{center}
				\begin{tikzpicture}[scale = 1]
					
					\def\squarepath{-- +(20mm,0mm) -- +(20mm,20mm) -- +(0mm,20mm) -- cycle}
					
					%-------
					\draw [step=4mm,dotted] (-4mm,-4mm) grid (24mm,24mm);
					
					\draw[thick,red](0mm,10mm)--(10mm,10mm)--(10mm,22mm)--(20mm,22mm);
					\draw[thick](2mm,0mm)--(2mm,14mm)--(6mm,14mm)--(6mm,18mm)--(20mm,18mm);
					
					\node at (24mm,22mm) {$q$};
					\node at (24mm,18mm) {$p$};
					
				\end{tikzpicture}
				\caption{Forbidden configuration: there is more than one p-cross between pipe \(p\) and \(q\).}\label{impossible}
				
			\end{center}
			
		\end{figure}
		
	\end{proof}

    Lemma~\ref{l3} shows that the algorithm does not create multiple p-crosses between the same pair of pipes. Thus, when the permutation is read from the resulting diagram, no additional p-cross can contribute a new inversion. Moreover, every c-cross produced by the algorithm occurs only between a pair of pipes that has already formed a p-cross; hence it does not change the set of pipe pairs that determine the permutation.

    A priori, it is unclear that after each iteration of the algorithm (finishing a combined step of drooping and undrooping of a pipe), we get a marked bumpless pipedream. We now show that the diagrams we get in each intermediate step are valid marked bumpless pipedreams. 
	\begin{prop}\label{standard}
		At each intermediate step of the algorithm, the resulting diagram is a valid marked bumpless pipedream. 
	\end{prop}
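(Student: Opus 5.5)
We argue by induction on the number of elementary operations performed (one drooping of a pipe, or one mini-undroop), starting from $\SRPD(w)$. Forgetting the snow stars, $\SRPD(w)$ is the Rothe pipedream $\RPD(w)$, which is a valid marked bumpless pipedream. So it suffices to show that each elementary operation sends a valid marked bumpless pipedream of $w$ to another one.

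Validity has two parts. The \emph{local} part says that every cell carries one of the seven allowed tiles and that the pipe segments on adjacent edges match. The \emph{global} part says that there are $n$ pipes, each entering from the bottom and leaving through the right edge, and that the permutation read off is still $w$.

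For drooping, we go through the seven local rules (1)--(7) and check that each one is a legal rewrite. Each rule receives pipe $p$ on one edge and sends it out on another. It turns a $\btile$ into $\rtile$ or $\mtile$, and it turns a single-pipe tile ($\htile$, $\vtile$, $\rtile$, $\mtile$) into $\ptile$. The old pipe in that cell keeps its entry and exit edges, and the edges now used by $p$ were previously unused. Hence edge matching is preserved.

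Termination and the endpoints are handled as follows. The redrawn pipe only ever moves left or down. By rule (1), once it reaches column $p$ it runs straight down to the bottom edge, so it still starts at column $p$ at the bottom and ends at row $w^{-1}(p)$ on the right. We also need that $p$ actually reaches column $p$ before hitting the left boundary or the bottom edge in some other column. For this we use the count of Lemma~\ref{l1}: $p$ makes $t = num_*(p)$ bi-steps, which shift it exactly from its original vertical segment. The old path of $p$ is erased, and since $p$ was the only pipe in those $\htile$/$\vtile$ cells apart from crossings, the vacated cells become $\btile$ or single-pipe tiles in a consistent way. The other pipes are untouched except where they pick up a cross with $p$.

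For the permutation, Lemma~\ref{l2} says that non-crossing pairs remain non-crossing. Lemma~\ref{l3} says that each pair gets at most one p-cross, and the remark following it says that c-crosses occur only between pairs that already have a p-cross. Therefore the set of pairs whose first crossing is a genuine crossing is exactly the inversion set of $w$, and the diagram reads $w$.

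For a mini-undroop we check the operation rectangle of Step~2, with SE corner $(a_i,b_i)$ and NW corner $(a_{i+1},b_{i-1})$. Pipe $q$ is rerouted along the left and top sides of the rectangle instead of the bottom and right sides. This is the mini-undroop of \cite{HP25}, so we follow their validity argument. We need two facts about the interior and new boundary of the rectangle: they contain no $\rtile$, $\mtile$ or $\btile$ of another pipe that would conflict, and every other pipe meets the new route only transversally, so the resulting crosses are legal $\ptile$s. The tile $(a_i,b_i)$ becomes a $\btile$ or loses pipe $q$, and $(a_{i+1},b_{i-1})$ becomes $q$'s new $\rtile$. The endpoints of $q$ are unchanged, and Lemma~\ref{l3} again controls the crossings.

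The main obstacle is the rectangle-emptiness claim for mini-undroops, and to a lesser extent the claim that drooping reaches column $p$. For the mini-undroop I would first argue that, because we always treat the topmost long line and the region to its right, the rectangle lies in a zone where every other pipe is a straight horizontal or vertical run. I would try to derive this from the structure recorded in Lemma~\ref{l1}(2), that the blank boxes to the left of a drooped pipe form a staircase, together with Remark~\ref{r1}.
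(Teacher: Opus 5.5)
Your treatment of drooping (each of rules (1)--(7) only occupies edges that were previously free) and of the permutation (via Lemmas~\ref{l2} and~\ref{l3}) is fine. It is more explicit than the paper, which simply asserts that only undrooping can create illegal tiles. The gap is where you locate it yourself: the mini-undroop case, which is the entire content of the paper's proof, is left unproved. Citing \cite{HP25} does not settle it, because the validity of a mini-undroop there rests on properties of the operation rectangle that have to be checked in the present setting.

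The route you suggest through Lemma~\ref{l1}(2) does not work either. That lemma assumes that no long lines remain after drooping, whereas a mini-undroop is performed precisely because a long line is present. Your argument that a redrawn pipe actually reaches column $p$ is also only a sketch, but the paper takes that step for granted too.

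The idea you are missing is to apply Lemma~\ref{l3} to the old route of $q$ inside the rectangle. The paper observes that the non-corner cells of row $a_i$ between columns $b_{i-1}$ and $b_i$, and of column $b_i$ between rows $a_{i+1}$ and $a_i$, are $\ptile$s. The reason is that during drooping a pipe continues straight only through an existing $\htile$ or $\vtile$, and it turns at every $\btile$.

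Any other pipe through one of these $\ptile$s must therefore cross the rectangle in a straight line, either vertically from the bottom side to the top side or horizontally from the left side to the right side. If it turned inside the rectangle, it would have to meet $q$ a second time, and the paper excludes this by Lemma~\ref{l3}. Consequently, when $q$ is moved onto the left and top sides, it meets exactly the same pipes, again transversally. So no conflict arises away from the corners, and at the NW target a $\btile$ or $\mtile$ becomes a $\rtile$ or $\ptile$.

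This straight-through property is exactly the transversality you needed. It comes from the at-most-one-p-cross lemma, not from the staircase structure of Lemma~\ref{l1}(2).
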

	
	\begin{figure}[h]
		\begin{center}
			\begin{tikzpicture}[scale = 1]
				
				\def\rectanglepath{-- +(16mm,0mm) -- +(16mm,12mm) -- +(0mm,12mm) -- cycle}
				\def\squarepath{[black]-- +(4mm,0mm) -- +(4mm,4mm) -- +(0mm,4mm) -- cycle}
				
				\draw [red](0mm,8mm)\rectanglepath;
				
				%-------
				\draw [step=4mm,dotted] (-4mm,-4mm) grid (24mm,24mm);
				
				\draw[thick,green](2mm,0mm)--(2mm,10mm)--(14mm,10mm)--(14mm,18mm)--(20mm,18mm);
				
				\draw (0mm,16mm)\squarepath;
				\draw (12mm,8mm)\squarepath;
				\draw[thick](6mm,6mm)--(6mm,22mm) ;
				\draw[thick](10mm,6mm)--(10mm,22mm) ;
				\draw[thick](-2mm,14mm)--(20mm,14mm) ;

				\node[scale=0.7] at (-2mm,22mm) {$(a_{i+1},b_{i-1})$};
				\node[scale=0.7] at (20mm,6mm) {$(a_{i},b_{i})$};
				
				\node at (24mm,18mm) {$q$};
				\node at (6mm,4mm) {$p_1$};
				\node at (10mm,4mm) {$p_2$};
				\node at (24mm,14mm) {$p_3$};

			\end{tikzpicture}
			\caption{Shape of tiles in the operation rectangle during mini-undroop, showing fixed tiles.}\label{f}
		\end{center}
		
	\end{figure}
	
	\begin{proof}
		
		Throughout the algorithm, only the ``Undrooping" process may generate tiles of other shapes. Below, we discuss this case in detail. 
		
		If pipe $q$ passes through a certain tile while performing the ``Drooping" algorithm, that tile must originally contain horizontal or vertical pipes, that is, it must have been $\htile$ or $\vtile$. After the ``Drooping" algorithm on pipe $q$, these tiles become $\ptile$. Therefore, the shape of some tiles within the operation rectangle is fixed: the tiles between $b_{i-1}$-th column and $b_i$-th column in $a_i$-th row must be $\ptile$; the tiles between $a_i$-th row and $a_{i+1}$-th in $b_i$-th column must be $\ptile$. 
		
		Pipes other than $q$ in the above $\ptile$ must go directly through the operation rectangle. Otherwise, considering the paths of the pipes, some pipe would have two p-crosses with pipe $q$, which, by Lemma \ref{l3}, is impossible. For example, in the case shown in Figure \ref{f}, let the three pipes that $q$ passes through be $p_1$, $p_2$, $p_3$; they must each directly pass through the operation rectangle either horizontally or vertically. 
		
		We call the box at the NW corner of the operation rectangle the \defi{target}. For 
		example, in Figure \ref{f}, $(a_{i+1},b_{i-1})$ is the target. 
        It is clear from the algorithm that, in any given operation rectangle, if the target occurs in a \(\btile\) or \(\mtile\), it becomes a \(\rtile\) or \(\ptile\) after mini-undrooping. 
		
		Therefore, the undroop operation does not cause overlaps or conflicts between pipes; it may only change the positions of crossings involving certain pipes. The resulting diagram remains a standard marked bumpless pipedream.  
	\end{proof}
	
	\begin{prop}\label{perm remain}
		The permutation remains unchanged after the algorithm, i.e., $\widehat{\mathrm{D}}(w) \in \MBPD(w)$. 
	\end{prop}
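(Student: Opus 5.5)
We argue by induction over the elementary operations of the algorithm: each single Drooping of one pipe and each single mini-undroop. The induction hypothesis is that, before the operation, the diagram is a marked bumpless pipedream (Proposition~\ref{standard}) in which each pipe $p$ still enters from the bottom in column $p$ and exits on the right in row $w^{-1}(p)$. Moreover, the set of unordered pairs of pipes that cross (counting only the first crossing) is exactly the inversion set $\{(w(i),w(j)) : i<j,\ w(i)>w(j)\}$. The base case is $\SRPD(w)$, whose underlying pipedream is $\RPD(w)\in\MBPD(w)$.

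First, endpoints. In Drooping, pipe $p$ keeps its horizontal segment to the right of the replaced $\htile$, so it still exits in row $w^{-1}(p)$. Rule (1) forces it, once it reaches column $p$, to run straight down and exit the bottom in column $p$. The local rules (2)--(7) change only tiles entered by $p$, and each replaced tile keeps the other pipe's in/out edges, so every other pipe's route is unchanged. A mini-undroop moves pipe $q$ only between the SE and NW corners of an operation rectangle, keeping its entry column and exit row. By the tile analysis in the proof of Proposition~\ref{standard}, the other pipes inside the rectangle pass straight through, so their endpoints are also preserved. Hence the labels on the bottom and on the right are the same as in $\RPD(w)$.

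Second, the crossing set. By Lemma~\ref{l2}, no pair that is non-crossing in $\RPD(w)$ ever acquires a crossing, so no new inversion is introduced. For a pair $p,q$ that crosses in $\RPD(w)$, with $w^{-1}(p)>w^{-1}(q)$ and $p<q$, we must check that they still cross. Pipe $q$ enters in a column to the right of $p$ but exits in a row above $p$. Because bumpless pipes move only up and right, a planar (Jordan-curve) argument shows their paths must meet in a tile where they cross. A c-cross alone cannot realise this, since it swaps which pipe continues on which side; as remarked after Lemma~\ref{l3}, a c-cross only occurs between pipes that already have a p-cross. By Lemma~\ref{l3}, there is at most one p-cross per pair, so the first crossing is well defined. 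With the reading convention of ignoring crossings after the first, the pair is recorded as inverted exactly once. Thus the inversion set equals that of $w$, and the diagram read off is $w$. Applying this after the final step gives $\widehat{\mathrm{D}}(w)\in\MBPD(w)$.

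The main obstacle is the second step for crossing pairs. It needs the topological fact that two pipes with interleaved endpoints must share a genuine p-cross rather than merely touch at a c-cross. It also needs the fact that undrooping, which relocates crossings of $q$, never removes $q$'s unique p-cross with another pipe. I would handle this by observing that in each operation rectangle the fixed $\ptile$ rows and columns remain crossings of $q$ after the move, relocated to the rectangle's NW boundary. I would then check the corner target tile case by case ($\btile$ or $\mtile$ becoming $\rtile$ or $\ptile$).
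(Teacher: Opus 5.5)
Your proposal is correct and follows essentially the same route as the paper. Lemma~\ref{l2} rules out new crossing pairs, and existing crossings are only relocated by drooping and mini-undroops; your endpoint-tracking and planarity argument just makes explicit what the paper asserts in one line. Lemma~\ref{l3}, together with the remark that c-crosses occur only between pairs that already have a p-cross, ensures the first-crossing reading returns $w$. One wording fix: a c-cross is a bump and leaves the two pipes on their original sides, while it is the p-cross that exchanges them. That is exactly why a c-cross alone cannot account for interleaved endpoints, so your conclusion stands but the stated reason is inverted.
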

	
	\begin{proof}
		
        By Lemma~\ref{l2}, any pair of pipes that is noncrossing in \(\RPD(w)\) remains noncrossing throughout the algorithm. Thus the algorithm creates no new crossing pair. On the other hand, the local drooping and undrooping moves only move existing crossings between already-crossing pairs, and Lemma~\ref{l3} prevents extra p-crosses from contributing new inversions when the permutation is read from the diagram. Hence the set of pipe pairs that determine the permutation is unchanged. Therefore the resulting marked bumpless pipedream still lies in \(\MBPD(w)\).
        
	\end{proof}

	\begin{prop}\label{raj}
    The marked bumpless pipedream $\widehat{\mathrm{D}}(w)$ satisfies $\rwt(\widehat{\mathrm{D}}(w))=\rajcode(w)$. 
	\end{prop}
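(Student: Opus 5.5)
The plan is to track the row weight through the algorithm and compare it with $\rw(\Snow(w))$. By Definition~\ref{def}, $\rajcode(w)_i$ equals the number of $\btile$ of $\RPD(w)$ in row $i$ plus the number of {\large{$*$}} in row $i$ of $\SRPD(w)$. In $\SRPD(w)$, every {\large{$*$}} lies on a horizontal segment of some pipe. Moreover, the {\large{$*$}}s in a given row are exactly the snow cells above dark clouds, and each such cell sits on the horizontal part of the pipe whose starting row is that row. Hence the target identity becomes
\[
\rwt(\widehat{\mathrm{D}}(w))_i \;=\; \#\{\btile \text{ in row } i \text{ of } \RPD(w)\} \;+\; num_*(p_i),
\]
where $p_i=w(i)$ is the pipe starting in row $i$. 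I would prove this by induction over the processing order $w(n),w(n-1),\dots,w(1)$. The invariant is that, after pipes $w(n),\dots,w(k)$ have been processed (including all triggered undroopings), the count of $\btile$ and $\mtile$ in each row $i\ge k$ already equals $\rajcode(w)_i$. For rows $i<k$, the count should equal the Rothe count plus whatever the later pipes have transiently deposited. The plan is to show that these transient deposits are removed again by the time row $i$'s own pipe finishes.

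The key local count comes from Lemma~\ref{l1}. When pipe $p$ is drooped, part (1) says that its starting row gains exactly $num_*(p)$ new empty boxes, since the $t$ cells between the old $\rtile$ and the new one become $\btile$. Every other move in Drooping is weight-neutral in its row. Rules (2)--(5) convert crossings. Rule (6) turns $\btile$ into $\mtile$, which counts the same. Rule (7) turns $\btile$ into $\rtile$ and loses one box. The point to check is that each rule (7) loss at some row is matched by the bi-step gain of a box in that row, recorded by a $\mtile$. This matching is exactly the bi-step bookkeeping in Lemma~\ref{l1}(2): the left-then-down move trades one box for one marked elbow.

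Rows other than the starting row of $p$ do pick up the staircase of new boxes described in Lemma~\ref{l1}(2). These boxes, however, lie in rows above, where they are precisely where the remaining {\large{$*$}} of earlier-starting pipes are to be realized. Alternatively, they are cancelled by the subsequent Undrooping. Each mini-undroop moves a $\btile$ or $\mtile$ from the SE corner $(a_i,b_i)$ to the NW target $(a_{i+1},b_{i-1})$. By the proof of Proposition~\ref{standard}, it shifts weight from row $a_i$ to row $a_{i+1}$ and leaves the total unchanged.

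The main obstacle is this global accounting across rows under Undrooping. I need that once all long lines of a pipe $q$ are removed, the net weight transfer brings each row back to Rothe count plus the {\large{$*$}}-count of its starting pipe. The first approach I would try is a total-count argument combined with a row-by-row upper bound. First, the total number of $\btile$ and $\mtile$ equals $\ell(w)$ plus the total number of {\large{$*$}}, since each Drooping adds exactly $num_*(p)$ net boxes and undrooping preserves the total. That total equals $|\rajcode(w)|$. Second, the absence of long lines at the end forces every horizontal segment of each pipe to lie in its starting row. From this I would show that row $i$ contains at most (Rothe count) $+\,num_*(p_i)$ boxes. The argument is that the cells right of pipe $p_i$'s final $\rtile$ in row $i$ are all pipe cells, and the cells left of it are bounded using the gap estimates from Lemma~\ref{l2}. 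Equality of the totals then forces equality in every row, which gives $\rwt(\widehat{\mathrm{D}}(w))=\rajcode(w)$.
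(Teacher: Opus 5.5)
\textbf{Verdict: there is a genuine gap.} Your route differs from the paper's, but its decisive step is unproved.

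The key step is the row-by-row inequality $\rwt(\widehat{\mathrm{D}}(w))_i\le \#\{\btile \text{ in row } i \text{ of } \RPD(w)\}+num_*(w(i))$. Your argument rests on it, it is not proved, and the tool you cite cannot prove it. Lemma~\ref{l2} is a qualitative separation statement: non-crossing pairs stay non-crossing, and its proof shows that certain gaps stay $\ge 1$. It gives no upper bound on any gap. It also gives no control over the column of the final $\rtile$ of $w(i)$ in row $i$. That column moves right to the $num_*(w(i))$-th $\htile$ during drooping, and can move back left under later undroopings. So Lemma~\ref{l2} cannot cap the number of empty tiles to the left of that $\rtile$.

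Such a cap is also not a general feature of maximal diagrams. For $w=1423$ one has $\rajcode(w)=(1,2,0,0)$, yet $\widehat{\MBPD}(w)$ contains a diagram of row weight $(2,1,0,0)$. So the inequality has to be extracted from the specific moves of the algorithm in row $i$. That is the same work the equality itself requires, so the reduction to ``total count plus upper bound'' saves nothing.

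The paper does this tracking directly. Right after $w(i)$ droops, row $i$ has gained exactly $num_*(w(i))$ boxes (Lemma~\ref{l1}(1)). Every later change in row $i$ is then matched by an opposite one in the same row:
\begin{itemize}
\item the $+1$ events are $\vtile\to\btile$ in the vacated column, and the $+1$ an undrooping puts in its bottom row $a_1$ (where a $\ptile\to\htile$ created the long line);
\item the $-1$ events are $\btile\to\rtile$ or $\mtile\to\ptile$ along the redrawn pipe, and the $-1$ an undrooping puts in its top row $a_{r+1}$;
\item consecutive mini-undroops cancel in rows $a_2,\dots,a_r$.
\end{itemize}

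Your total-count step has the same hole. ``Each Drooping adds exactly $num_*(p)$ net boxes'' is justified only in the starting row; elsewhere the individual moves are not weight-neutral. The pairing you propose, matching the rule (7) loss with the $\mtile$ of the bi-step, is wrong, because $\btile\to\mtile$ does not change the count. The compensating $+1$ is the cell vacated by the old vertical segment of $p$, or, if that cell was a crossing, the undrooping it triggers.

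Several of your local descriptions are also reversed:
\begin{itemize}
\item The staircase of Lemma~\ref{l1}(2) lies below the starting row, not above. In the situation of Figure~\ref{emptybox}, each of its rows has net change zero.
\item Drooping a pipe that starts in row $s$ acts only in rows $\ge s$, and the undroopings it triggers involve only pipes starting below $s$. Hence in your invariant the rows $i<k$ are still Rothe rows, with no ``transient deposits''.
\item A mini-undroop moves a box from row $a_{i+1}$ down to row $a_i$, not the other way. The target $(a_{i+1},b_{i-1})$ loses its box, the long-line cell $(a_i,b_{i-1})$ becomes a $\mtile$, and the SE corner stays a box.
\end{itemize}

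Your invariant is the right frame. The missing piece is exactly the proof that processing the pipes that start above row $i$ leaves the count in row $i$ unchanged.
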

	
	\begin{proof}
    Suppose we discuss the change in weight of row \(i\) in the intermediate diagram \(D'\) during the algorithm. 
		During the algorithm, the operation at the starting row of each pipe generates a number of new empty boxes equal to the number of {\large{$*$}}, thus leaving the $\rwt_i(D')$ unchanged, i.e. $\rwt_i(D') = \rw_i(\Snow(w))$. The effects of other operations on $\rwt_i(D')$ will be discussed later. 
		
		\textbf{Case 1.} During the drooping process, the changes in $\rwt_i(D')$ are described below. 
		\begin{itemize}
			\item[(1)] When the old pipe disappears, the following two situations may happen:
			\begin{itemize}
				\item[$\bullet$] A $\vtile$ changes to $\btile$, which increases $\rwt_i(D')$ by 1.
				\item[$\bullet$] A $\ptile$ changes to $\htile$, and if it does not lie in the starting row of the pipe contained in the $\htile$ , then it becomes a long line that requires undrooping. In this case, the current row is the bottommost row affected by the undrooping operations. According to the later analysis of Case 2 (1), this will cause $\rwt_i(D')$ to increase by 1 in this row. 
			\end{itemize}
			
			\item[(2)] When redrawing the pipe, two situations may arise:
			\begin{itemize}
				\item[$\bullet$] A $\btile$ changes to $\rtile$, or a $\mtile$ changes to $\ptile$, which decreases $\rwt_i(D')$ by 1. 
				\item[$\bullet$] A pipe passes downward through a $\htile$ and the tile turns into $\ptile$. According to the algorithm, such a $\htile$ only appears in the starting row of pipe contained in that $\htile$. 
				
				If the pipe contained in that $\htile$ later requires undrooping, then the current row is the topmost row affected by the undrooping operations. Based on the subsequent analysis of Case 2 (3), this will cause $\rwt_i(D')$ to decrease by 1 in this row. 
			\end{itemize}

		\end{itemize}
		
		When Case 1 (1) occurs, $\rwt_i(D')$ increases by 1; when Case 1 (2) occurs, $\rwt_i(D')$ decreases by 1. Thus, after the undrooping operations generated in the above process are completed, $\rwt_i(D')$ ultimately remains unchanged. 
        We now consider the exceptional case in which the above transformation does not trigger an undrooping step. In the second item of Cases 1(1) and Case 1(2), the scenario without undrooping always occurs in the same row. Specifically, in row $i$, a $\ptile$ transforms into a $\htile$ and a $\htile$ transforms into a $\ptile$; in this case, the $\rwt_i(D')$ remains unchanged. If, in row $i$, the transformation of a $\ptile$ into a $\htile$ as in Case 1(1) occurs without triggering undrooping, then the row $i$ must be the starting row of the pipe contained in that $\htile$. After pipe $p$ completes the ``Drooping" algorithm, a $\htile$ on the right side of this row transforms into a $\ptile$; hence the $\rwt_i(D')$ remains unchanged. If, in row $i$, the transformation of a $\htile$ into a $\ptile$ as in Case 1(2) occurs and the pipe passed through by $p$ subsequently undergoes no undrooping (this process occurs only at the starting row of the pipe contained in that $\htile$), then the transformation of a $\ptile$ into a $\htile$ also occurs at a starting row, and consequently the $\rwt_i(D')$ remains unchanged.
		
		\textbf{Case 2.} During the undrooping process, the changes in $\rwt_i(D')$ are described below.  Mark the rows and columns as specified in the ``Undrooping" algorithm and perform undrooping accordingly. 
		
		\begin{itemize}
			\item[(1)] When $i = a_1$, the $\htile$ changes into $\mtile$, it increases $\rwt_i(D')$ by 1. 
			
			\item[(2)] For any $i = a_2, a_3, \dots, a_r$, as can be observed from the ``Undrooping'' algorithm, the undrooping operation for a pipe is continuous: it must begin at the initial long line of the pipe and proceed until reaching the pipe's starting row. 
			
			Let \( h = 2, 3, \dots, r \). When \( i = a_h \), during the \((h-1)\)-th mini-undroop, either a $\btile$ changes to $\rtile$ or a $\mtile$ changes to $\ptile$, which decreases $\rwt_i(D')$ by 1. Then, during the \(h\)-th mini-undroop, the $\htile$ changes to $\mtile$, increasing $\rwt_i(D')$ by 1.
			
			\item[(3)] When $i = a_{r+1}$, a $\btile$ changes to $\rtile$, or a $\mtile$ changes to $\ptile$. This procedure decreases $\rwt_i(D')$ by 1. 
			
		\end{itemize}
		
		So the changes in $\rwt_i(D')$ for row $i = a_2, a_3, \dots, a_r$ cancel each other out, while the changes at the row $i = a_1$ and $i = a_{r+1}$ need to be explained in conjunction with the drooping process. Their effects have already been described in the Case 1. 

        By Definition~\ref{def}, $\rw(\Snow(w)) = \rajcode(w)$. 
        In summary, $\rwt_i(\widehat{\mathrm{D}}(w)) = \rw_i(\Snow(w))= \rajcode_i(w)$
	\end{proof}
    
    \begin{prop}\label{columnraj}
		The marked bumpless pipedream $\widehat{\mathrm{D}}(w)$ satisfies $\mathsf{cwt}(\widehat{D}(w)) =\mathsf{rajcode}(w^{-1})$. 
	\end{prop}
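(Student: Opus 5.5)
We mirror the proof of Proposition~\ref{raj}, with columns in place of rows and the left snow diagram $\overleftarrow{\Snow}(w)$ in place of $\Snow(w)$. By Definition~\ref{def}, $\rajcode(w^{-1}) = \cw(\overleftarrow{\Snow}(w))$, so it suffices to show $\cwt_j(\widehat{\mathrm{D}}(w)) = \cw_j(\overleftarrow{\Snow}(w))$ for every column $j$. The natural bookkeeping device is the remark that each pipe carries the same number of snow cells in $\Snow(w)$ as in $\overleftarrow{\Snow}(w)$. The stars of pipe $p$ in $\overleftarrow{\Snow}(w)$ lie to the left of the dark cloud cells, in the same rows and below pipe $p$'s horizontal segment. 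We therefore track, for each column $j$, the quantity $\cwt_j(D') - \cw_j(\Rothe(w))$ on an intermediate diagram $D'$. We compare it with the number of left snow cells in column $j$ belonging to pipes already processed.

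First we handle the Drooping of a single pipe $p$ with $t = num_*(p)$ stars. Initially its $\btile$ cells in columns $p, p+1, \dots$ come from the Rothe diagram. By Lemma~\ref{l1}(2), the new empty boxes created to the left of $p$ form a staircase: $t$ in the starting row, then $t-1$, and so on. Meanwhile pipe $p$'s vertical segment in column $p$ is pulled down and shifted right. Column by column, we then count the following. Along the path, $\btile\to\rtile$ and $\mtile\to\ptile$ each lower a column count by one. The vacated $\vtile$ in column $p$ becomes $\btile$, raising it. The staircase contributes the remaining changes. The claim is that the net change in column $j$ equals the number of left snow cells of $p$ in column $j$. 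To justify this, we identify the staircase columns with the columns $c$ to the left of a dark cloud in rows where $(r,c)$ is empty in $\Rothe(w)$. This uses the dark cloud construction (bottom to top, largest available column) and the fact that the $t$ horizontal segments $\htile$ of $p$ counted from its $\rtile$ are exactly where the stars of $\Snow(w)$ sit.

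Second, we show that Undrooping does not change column weights in aggregate. This is the analogue of Case 2 in Proposition~\ref{raj}. In a mini-undroop from $(a_i,b_i)$ to $(a_{i+1},b_{i-1})$, column $b_i$ loses a $\mtile$ (it becomes $\ptile$ or the pipe leaves it), while column $b_{i-1}$ loses a $\btile/\mtile$ at the target. Along the long line, a $\htile$ turns into $\mtile$ in column $b_{i-1}$ or $b_i$. We pair these changes along consecutive operation rectangles as in Figure~\ref{undroop}, so they telescope. Only the first column $b_0$ and the last column $b_r$ remain. We then match those endpoints with the corresponding p-cross creation or destruction in Case 1, as was done for rows $a_1$ and $a_{r+1}$. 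Lemma~\ref{l3} and Proposition~\ref{standard} ensure that the tiles inside each rectangle are as in Figure~\ref{f}, so no other column is affected.

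The main obstacle is the first step. The row statement was easy because stars in $\Snow(w)$ lie on pipe $p$'s own row, but the column identification is not local. The left snow cells of $p$ sit in columns determined by other pipes' dark clouds, and other pipes crossing $p$ distort the staircase. We would first try induction on pipes in the algorithm's order, from the bottom starting row upward. The inductive claim is that after processing all pipes with starting row $>i$, the empty cells in rows $>i$ coincide in each column with those of $\overleftarrow{\Snow}(w)$ restricted to those rows. The argument uses the gap estimates from Lemma~\ref{l2} to show that the staircase of the next pipe fills exactly the left snow columns. If that fails, a fallback is to compute only $\sum_j \cwt_j$ and appeal to the uniqueness in Theorem~\ref{T: PSW}. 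However, that is insufficient on its own, so the inductive column matching is essential.
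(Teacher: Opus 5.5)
Your skeleton (compare $\cwt_j$ with $\cw_j(\overleftarrow{\Snow}(w))$ droop by droop, with undrooping neutral) is the paper's. But Step~1, the heart of the matter, is left unproved, and the route you sketch rests on a misreading of where the left snow cells sit.

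If $(r,c')\notin\Rothe(w)$ lies to the left of a dark cloud $(r,c)$, then $w(r)>c>c'$ forces $w^{-1}(c')<r$, so $(r,c')$ is a \vtile{} of pipe $c'$ in $\RPD(w)$. Hence all snow cells of $\overleftarrow{\Snow}(w)$ in column $j$ lie on pipe $j$. They are indexed by the dark clouds strictly southeast of $(w^{-1}(j),j)$, and these are exactly the dark clouds that index the stars of pipe $j$ in its starting row. Thus $\cw_j(\overleftarrow{\Snow}(w))-\cw_j(\Rothe(w))=num_*(j)$, and the target is entirely local: processing pipe $p$ must add exactly $t=num_*(p)$ boxes to column $p$ and nothing, net, to any other column.

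Your Step~1 claim is true for this reason, not the one you give. The left snow cells of $p$ do not lie in columns determined by other pipes' dark clouds; only their rows are. Matching the staircase $t,t-1,\dots,1$ of Lemma~\ref{l1}(2) with left snow columns gives wrong numbers, because the new boxes in columns $p+1,p+2,\dots$ are cancelled by \btile$\to$\rtile{} changes at the bi-step elbows. For $w=5241736$ (Figure~\ref{bumpless}(c)$\to$(d)), drooping pipe $2$ ($t=2$) turns the \htile{} at $(2,3)$ into a \btile{} but the \btile{} at $(3,3)$ into a \rtile{}. So column $3$ nets $0$ and column $2$ nets $+2$, matching the left snow cells $(3,2)$ and $(5,2)$.

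The induction you fall back on cannot help, since its invariant is false. For the same $w$ and $i=4$, the pipes starting in rows $5,6,7$ carry no stars, so rows $5$--$7$ still contain only the boxes $(5,3),(5,6)$. Meanwhile $\overleftarrow{\Snow}(w)$ also has $(5,1),(5,2),(5,4),(5,5)$ there, and even the final diagram has only $(5,3),(5,6)$ in those rows. Column counts agree only after summing over all rows.

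What is missing is the paper's count in column $p$. The new \rtile{} of $p$ in column $p$ lies $t+m$ rows below the starting row, where $m$ is the number of pipes $p$ passes through vertically while being redrawn. The $t+m$ cells of column $p$ vacated above that \rtile{} are each either a \htile{} or a new \btile{}, and Lemma~\ref{l2} identifies those \htile{}s with the $m$ vertical passes. So exactly $t$ new boxes appear in column $p$, while the remaining \htile$\to$\btile{} and \btile$\to$\rtile{} (or \mtile$\to$\ptile) changes cancel.

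Your Step~2 accounting is also off. In a mini-undroop the corner \mtile{} of $q$ at $(a_i,b_i)$ just becomes \btile{} when $q$ leaves it, so column $b_i$ loses nothing. In column $b_{i-1}$ the long-line \htile$\to$\mtile{} cancels the target's \btile$\to$\rtile{} (or \mtile$\to$\ptile). Each mini-undroop therefore preserves every column count exactly. There is no residue at $b_0$ or $b_r$ to be matched with drooping, unlike rows $a_1$ and $a_{r+1}$ in Proposition~\ref{raj}. With your bookkeeping ($-1$ at $b_i$, $-1$ at $b_{i-1}$, $+1$ from the long line), every rectangle would lose a box and nothing would telescope.
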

    \begin{proof}
        Suppose we discuss the change in weight of column \(j\) in the intermediate diagram \(D'\) during the algorithm. 
        
        We first analyze the change in $\cwt(D')$ of the terminating column caused by applying the ``Drooping" algorithm to each pipe. Suppose the pipe under consideration is \(p\), and the number of {\large{$*$}} on pipe \(p\) in the $\Snow(w)$ is \(t\) (i.e., \(\operatorname{num}_*(p) = t\)). By Lemma~\ref{l1},  the total number of bi-steps equals to $num_{*}(p)+1$. Consider the row where the \(\rtile\) of pipe \(p\) lies in its terminating column after the algorithm, set it as $i_p$. Upon reaching this tile, pipe \(p\) has completed \(t\) bi-steps and is executing the \((t+1)\)-st bi-step, having just turned downward. Hence the row \(i_p\) is equal to \(t\) plus the number of pipes that pipe \(p\) vertically passes through when being redrawn; that is, 
\begin{equation}\label{eq1}
        i_p = t + \text{ \# \{ vertically passing through pipes \} }.
        \end{equation}
        Moreover, 
        \begin{equation}\label{eq2}
            i_p = \text{ \# \{ \(\htile\) directly above the \(\rtile\) \} } + \text{ \# \{ newly generated \(\btile\) in column $p$\} }.
        \end{equation}
        By Lemma~\ref{l2}, we obtain, 
        \begin{equation}\label{eq3}
            \text{ \# \{ vertically passing through pipes \} } = \text{ \# \{ \(\htile\) directly above the \(\rtile\) \} }.
        \end{equation}
        From equation \eqref{eq1} and equation \eqref{eq3} we obtain, 
        \begin{equation}\label{eq4}
            i_p = t + \text{ \# \{ \(\htile\) directly above the \(\rtile\) \} }
        \end{equation}
        According to equation \eqref{eq2} and equation \eqref{eq4} we obtain, 
        \begin{equation}\label{eq5}
            \text{ \# \{ newly generated \(\btile\) in column $p$\} } = t.
        \end{equation}
        Moreover, from the foregoing, the number of {\large{$*$}} in column \(p\) of the $\overleftarrow{\Snow}(w)$ is also equal to \(t\). 
        Thus, equation \eqref{eq5} states that the number of {\large{$*$}} in column \(p\) of \(\overleftarrow{\Snow}(w)\) equals the number of \(\btile\) released in column \(p\) after applying the ``Drooping" algorithm. Therefore, this operation does not change the $\cwt_j(D')$, i.e. $\cwt_j(D') = \cw_j(\overleftarrow{\Snow}(w))$.

        Next, we discuss the change in $\cwt_j(D')$ for the other operations. 
        
    \textbf{Case 1.} During the drooping process, the changes in $\cwt_j(D')$ are described below. 
		\begin{itemize}
        \item[(1)] When the old pipe disappears, a $\htile$ changes to $\btile$, which increases $\cwt_j(D')$ by 1.
        \item[(2)] When redrawing the pipe, a $\btile$ changes to $\rtile$, or a $\mtile$ changes to $\ptile$, which decreases $\cwt_j(D')$ by 1. 
        \end{itemize}
        
    \textbf{Case 2.} During the undrooping process, the changes in $\cwt_j(D')$ are described below.  Mark the rows and columns as specified in the ``Undrooping" algorithm and perform undrooping accordingly. 
    \begin{itemize}
        \item[(1)] When $j=b_0,b_1,...,b_{r-1}$, the $\htile$ changes into $\mtile$, it increases $\cwt_j(D')$ by 1. Either a $\btile$ changes to $\rtile$ or a $\mtile$ changes to $\ptile$, which decreases $\cwt_j(D')$ by 1. 
        \item [(2)] When $j=b_r$, the $\mtile$ changes into $\btile$, leaving $\cwt_j(D')$ unchanged. 
    \end{itemize} 
    By Definition~\ref{def}, $\cw(\overleftarrow{\Snow}(w)) = \rajcode(w^{-1})$. Therefore, $\cwt_j(\widehat{D}(w)) =\cw_j(\overleftarrow{\Snow}(w)) = \rajcode_j(w^{-1})$
    \end{proof}

    Finally, we prove the main theorem of this paper.
    \begin{proof}[Proof of Theorem~\ref{T: Main Theorem}]
    	Given a snow Rothe pipedream $\SRPD(w)$, applying the algorithm described in this paper yields the marked bumpless pipedream $\widehat{\mathrm{D}}(w)$. By Proposition \ref{perm remain}, $\widehat{\mathrm{D}}(w) \in \MBPD(w)$. 
    	Moreover, Proposition \ref{standard} guarantees that every intermediate diagram generated during each step of the algorithm is a valid marked bumpless pipedream.
    	Finally, by Proposition \ref{raj} and Proposition \ref{columnraj}, we have $\rwt(\widehat{\mathrm{D}}(w))=\rajcode(w)$ and $\mathsf{cwt}(\widehat{D}(w)) =\mathsf{rajcode}(w^{-1})$. So the marked bumpless pipedream $\widehat{\mathrm{D}}(w)$ we construct has row weight $\mathsf{rajcode}(w)$ and column weight $\mathsf{rajcode}(w^{-1})$. 

        By Theorem~\ref{T: PSW} and the monomial-positive expansion of \(\fCM_w(x;y)\) in terms of marked bumpless pipedreams, there is at most one marked bumpless pipedream in \(\MBPD(w)\) with row weight \(\rajcode(w)\) and column weight \(\rajcode(w^{-1})\). Since $\widehat{\mathrm{D}}(w)$ constructed above has precisely these two weights, it is the unique maximal marked bumpless pipedream of \(w\).
    \end{proof}
	
	\bibliographystyle{alpha}
	\bibliography{citation}{}
\end{document}